\DeclareMathOperator{\Hom}{Hom}
\DeclareMathOperator{\Ext}{Ext}
\DeclareMathOperator{\mmod}{mod-}
\DeclareMathOperator{\grmod}{grmod-}
\DeclareMathOperator{\grproj}{grproj-}
\newcommand{\grstab}[2]{\operatorname{grstab}_{#1}\operatorname{-}#2}
\newcommand{\stab}[2]{\operatorname{stab}_{#1}(#2)}
\newcommand{\rproj}[1]{\operatorname{proj-}\hspace{-1.5pt}#1}
\newcommand{\rinj}[1]{\operatorname{inj-}\hspace{-1.5pt}#1}
\newcommand{\lproj}[1]{#1\hspace{-1pt}\operatorname{-proj}}
\newcommand{\linj}[1]{#1\hspace{-1pt}\operatorname{-inj}}
\newcommand{\Proj}[1]{\operatorname{Proj}(#1)}
\newcommand{\Inj}[1]{\operatorname{Inj}(#1)}
\newcommand{\Gproj}[1]{\operatorname{Gproj}(#1)}
\newcommand{\Ginj}[1]{\operatorname{Ginj}(#1)}
\newcommand{\lmod}[1]{#1\hspace{-1pt}\operatorname{-mod}}
\newcommand{\rmod}[1]{\operatorname{mod-}\hspace{-1.5pt}#1}
\DeclareMathOperator{\Z}{\mathbb{Z}}
\DeclareMathOperator{\E}{\mathcal{E}}
\DeclareMathOperator{\A}{\mathcal{A}}
\DeclareMathOperator{\Mimo}{Mimo}
\DeclareMathOperator{\Cok}{Cok}
\DeclareMathOperator{\Ker}{Ker}
\DeclareMathOperator{\MMor}{MMor}
\DeclareMathOperator{\Mor}{Mor}
\DeclareMathOperator{\EMor}{EMor}
\newcommand\xrightarrowtail[2][]{\ensurestackMath{\mathrel{%
  \stackengine{1pt}{%
    \stackengine{0pt}{\rightarrowtail}{\scriptstyle#2}{O}{c}{F}{F}{S}%
  }{\scriptstyle#1}{U}{c}{F}{F}{S}%
}}}
\let\amsamp=&
\newtheorem{theorem}{Theorem}[section]
\newtheorem{lemma}[theorem]{Lemma}
\newtheorem{proposition}[theorem]{Proposition}
\newtheorem{corollary}[theorem]{Corollary}
\theoremstyle{definition}
\newtheorem{definition}[theorem]{Definition}
\theoremstyle{remark}
\newtheorem*{remark}{Remark}
\begin{document}

\title{The $N$-Stable Category}
\author{Jeremy R. B. Brightbill}
\address{UC Santa Barbara, Department of Mathematics, 552 University Rd, Isla Vista, CA 93117}
\email{jbrightbill@math.ucsb.edu}
\urladdr{https://sites.google.com/site/jeremyrbbrightbill}

\author{Vanessa Miemietz}
\address{
School of Mathematics, University of East Anglia, Norwich NR4 7TJ, UK}
\email{v.miemietz@uea.ac.uk}
\urladdr{https://www.uea.ac.uk/~byr09xgu/}

\date{\today}

\maketitle

\begin{abstract}
A well-known theorem of Buchweitz provides equivalences between three categories: the stable category of Gorenstein projective modules over a Gorenstein algebra, the homotopy category of acyclic complexes of projectives, and the singularity category.  To adapt this result to $N$-complexes, one must find an appropriate candidate for the $N$-analogue of the stable category.  We identify this ``$N$-stable category'' via the monomorphism category and prove Buchweitz's theorem for $N$-complexes over a Frobenius exact abelian category.  We also compute the Serre functor on the $N$-stable category over a self-injective algebra and study the resultant fractional Calabi-Yau properties.
\end{abstract}

\section{Introduction}

The notion of $N$-complexes, which goes back to Mayer \cite{Mayer} and was first studied from a homological point of view by Kapranov \cite{Kap} and Dubois-Violette \cite{dubois1998d}, has received significant interest in recent years. As well as having applications in physics via spin gauge fields (see e.g. \cite{Dubois-violette99generalizedcohomology}), they are homologically interesting in their own right (see e.g. \cite{mirmohades}. In addition, they provide the simplest examples of N-differential graded categories, which, for $N$ a prime number, play an important role in categorification at roots of unity, see e.g. \cite{Kh, KhQ, EQ1,EQ2, EQ3}.

In the classical case of $N=2$, which recovers the usual notion of homological algebra, there are numerous deep and important theorems connecting various categories obtained from complexes.  One such example is a celebrated theorem by Buchweitz \cite[Theorem 4.4.1]{buchweitz1987maximal}, which, adapted to the setting of a Frobenius exact abelian category $\A$, provides equivalences between a) $K^{ac}(\Proj{\A})$, the homotopy category of acyclic complexes of projective objects; b) $D^s(\A)$, the singularity category of $\A$ (i.e., the Verdier quotient of the bounded derived category by the thick subcategory of perfect complexes); and c) $\stab{}{\A}$, the stable category of $\A$.  The equivalence between b) and c) was independently proved by Rickard \cite[Theorem 2.1]{Rickard}.

There are obvious $N$-complex analogues of categories a) and b), and an equivalence $K^{ac}_N(\Proj{\A}) \cong D^s_N(\A)$ generalizing Buchweitz was discovered by Bahiraei, Hafezi, and Nematbakhsh \cite{BHN}.  This raises a question:  is there an ``$N$-stable'' category which fills in the missing link in Buchweitz's theorem?  In this paper, we find that the correct object is the stable category of the monomorphism category, $\MMor_{N-2}(\A)$, whose objects are diagrams of $N-2$ successive monomorphisms in $\A$.  The monomorphism category has been intensively studied, particularly for $N=3$  \cite{RS1, ringel2008auslander}, but also for general $N$ \cite{zhang2011monomorphism}.  Monomorphism categories associated to arbitrary species have also recently been studied by \cite{GKKP}.

If $\E$ is an exact category, then $\MMor_{N-2}(\E)$ can be given the structure of an exact category (Proposition \ref{mmor exact}).  If $\E$ is Frobenius, then $\MMor_{N-2}(\E)$ inherits this property (Theorem \ref{frobenius}); in this case, we define the $N$-stable category, $\stab{N}{\E}$ to be the stable category of $\MMor_{N-2}(\E)$.  For a Frobenius exact abelian category $\A$, we prove that there are equivalences $K^{ac}_N(\A) \xrightarrow{\sim} \stab{N}{\A}$ (Theorem \ref{acyclic-stable equivalence}) and $\stab{N}{\A}\xrightarrow{\sim} D^s_N(\A)$ (Theorem \ref{stable-singularity equivalence}) generalizing Buchweitz, demonstrating that the $N$-stable category merits the name.

Classically, the stable category of a finite-dimensional self-injective algebra $A$ provides a rich source of examples of negative or fractional Calabi-Yau categories, a topic of major interest in homological representation theory with connections to many areas of mathematics, see e.g. \cite{keller2008acyclic, keller2008calabi, coelho2020simple, chan2020periodic}.  One might hope the $N$-stable category enjoys similar properties, and in Corollary \ref{CYdim} we prove that if the Nakayama automorphism of $A$ has finite order, then $\stab{N}{A}$ is fractional Calabi-Yau with the denominator parametrized by $N$.

To prove result, we provide an explicit description of the Serre functor on $\stab{N}{A}$ in Theorem \ref{Serre}.  The effect of the Auslander-Reiten translation (from which the Serre functor can easily be derived) on the objects of the stable monomorphism category has already been computed by Ringel and Schmidmeier \cite{ringel2008auslander} for $N=3$ and Xiong, Zhang, and Zhang \cite{xiong2014auslander} for general $N$.  However, utilizing the connection with $N$-complexes, we are able to provide a simpler version of their construction which is also functorial.

The structure of the paper is as follows:  In Section \ref{Definitions and Notation}, we briefly summarize relevant background material while establishing our terminology and notational conventions.  Section \ref{The N-Stable Category} develops the theory of the monomorphism category, culminating in the definition of the $N$-stable category.  The two relevant equivalences of Buchweitz's theorem are generalized in Sections \ref{Acyclic Projective-Injective N-Complexes} and \ref{The N-Singularity Category}.  In Section \ref{Calabi-Yau Properties}, we describe the Serre functor of the $N$-stable category, discuss its Calabi-Yau properties, and provide a worked example.

{\bf Acknowledgements.} V.M. is partially supported by EPSRC grant EP/S017216/1. Part of this research was carried out during a research visit of J.B. to the University of East Anglia, whose hospitality is gratefully acknowledged.  We also thank Paul Balmer for pointing out how the hypothesis of smallness in Proposition \ref{mmor exact, small} could be eliminated.

\section{Definitions and Notation}
\label{Definitions and Notation}
\subsection{Triangulated Categories}

We shall assume the reader is familiar with the basic theory of triangulated categories.  In lieu of a detailed explanation, we give a quick overview of the relevant topics and terminology; for more details, the reader may consult Neeman \cite{neeman2014triangulated} or Gelfand-Manin \cite{gelfand2013methods}.

Let $\mathcal{T}$ be an additive category, and let $\Sigma: \mathcal{T} \xrightarrow{\sim} \mathcal{T}$ be an additive automorphism of $\mathcal{T}$.  We shall call $\Sigma$ the \textbf{suspension functor} on $\mathcal{T}$.  A \textbf{triangle} in $\mathcal{T}$ is any diagram of the form $X \xrightarrow{f} Y \xrightarrow{g} Z \xrightarrow{h} \Sigma X$.  A \textbf{triangulated category} is the data of $\mathcal{T}$, $\Sigma$, and a collection of triangles (called the \textbf{distinguished triangles}), satisfying certain axioms.

If $(\mathcal{T}_1, \Sigma_1)$ and $(\mathcal{T}_2, \Sigma_2)$ are triangulated categories, a \textbf{triangulated functor} $F: \mathcal{T}_1 \rightarrow \mathcal{T}_2$ is the data of an additive functor $F$ and an isomorphism $\phi: F\Sigma_1 \xrightarrow{\sim} \Sigma_2 F$, such that $F$ (together with $\phi$) maps distinguished triangles in $\mathcal{T}_1$ to distinguished triangles in $\mathcal{T}_2$.

Any morphism $f:X \rightarrow Y$ in a triangulated category $\mathcal{T}$ can be extended to a distinguished triangle $X \xrightarrow{f} Y \xrightarrow{g} Z \xrightarrow{h} \Sigma X$.  We refer to $Z$ as the \textbf{cone} of $f$; it is unique up to (non-canoncial) isomorphism.  Similarly, we refer to $X$ as the \textbf{cocone} of $g$.

A full, replete, additive subcategory $\mathcal{S} \subseteq \mathcal{T}$ is said to be a \textbf{triangulated subcategory} if $\mathcal{S}$ is closed under $\Sigma^{\pm 1}$ and the cone of any morphism in $\mathcal{S}$ lies in $\mathcal{S}$.  A triangulated subcategory $\mathcal{S}$ is said to be \textbf{thick} if it is closed under direct summands.  In this case, we can form a new triangulated category $\mathcal{T}/\mathcal{S}$, called the \textbf{Verdier quotient}, with the same objects and suspension functor as $\mathcal{T}$.  There is a natural triangulated functor $\mathcal{T} \rightarrow \mathcal{T}/\mathcal{S}$ which is the identity on objects and whose kernel is precisely $\mathcal{S}$.  $\mathcal{T}/\mathcal{S}$ can also be viewed as the localization of $\mathcal{T}$ with respect to the multiplicative set of morphisms with cone in $\mathcal{S}$, hence morphisms in $\mathcal{T}/\mathcal{S}$ can be expressed in terms of a calculus of left and right fractions.  A triangle in $\mathcal{T}/\mathcal{S}$ is distinguished if and only if it is isomorphic (in $\mathcal{T}/\mathcal{S}$) to a distinguished triangle in $\mathcal{T}$.

\subsection{Serre Duality and Calabi-Yau Categories}

Let $F$ be a field and let $(\mathcal{T}, \Sigma)$ be an $F$-linear, Hom-finite triangulated category.  A \textbf{Serre functor} on $\mathcal{T}$ is an equivalence of triangulated categories $S: \mathcal{T} \xrightarrow{\sim} \mathcal{T}$ together with isomorphisms $\Hom_\mathcal{T}(X, Y) \cong D\Hom_\mathcal{T}(Y, SX)$ which are natural in $X$ and $Y$.  Here $D := \Hom_F(-,F)$ is the $F$-linear duality.

Let $m, l \in \Z$.  We say that $\mathcal{T}$ is \textbf{(weakly) $(m,l)$-Calabi-Yau} if $\mathcal{T}$ has a Serre functor $S$ and there is an isomorphism of functors $S^l \cong \Sigma^m$.  (Elsewhere in the literature, this is often written using the ``fraction'' $\frac{m}{l}$.)  Note that a triangulated category may be $(m,l)$-Calabi-Yau for many different integer pairs $(m, l)$.  If $l = 1$, then we shall simply say that $\mathcal{T}$ is (weakly) $m$-Calabi-Yau.  There is a stronger notion of the Calabi-Yau property, due to Keller \cite{keller2008calabi}, which requires the isomorphism be compatible with the triangulated structure, but our focus will be on the weaker notion.

\subsection{Exact Categories}
\label{Exact Categories}

We recall some basic definitions and terminology regarding exact categories.  For a more comprehensive overview, we refer to B\"uhler \cite{buhler2010exact}.

Let $\E$ be an additive category.  A \textbf{kernel-cokernel pair} in $\E$ is a diagram $X \xrightarrow{i} Y \xrightarrow{p} Z$ such that $i$ is the kernel of $p$ and $p$ is the cokernel of $i$.  Let $\mathcal{S}$ be a collection of kernel-cokernel pairs which is closed under isomorphisms; its elements will be called the \textbf{admissible short exact sequences}.  The kernels in $\mathcal{S}$ are called \textbf{admissible monomorphisms} and the cokernels are called \textbf{admissible epimorphisms}.  If the class of admissible monomorphisms (resp., admissible epimorphisms) contains all identity morphisms, is closed under composition, and is stable under pushouts (resp., pullbacks), we say that the pair $(\E, \mathcal{S})$ is an \textbf{exact category}.  For a more precise statement of the axioms, see \cite[Definition 2.1]{buhler2010exact}.  Note that $(\E, \mathcal{S})$ is exact if and only if $(\E^{op}, \mathcal{S}^{op})$ is exact.  If $(\E, \mathcal{S})$ and $(\E', \mathcal{S}')$ are exact categories, we say an additive functor $F: \E \rightarrow \E'$ is \textbf{exact} if $F(\mathcal{S}) \subseteq \mathcal{S}'$.

If $\E$ is an exact category, we say that a subcategory $\E'$ of $\E$ is \textbf{closed under extensions} if whenever $X \rightarrowtail Y \twoheadrightarrow Z$ is an admissible short exact sequence in $\E$ with $X, Z \in \E'$, then $Y$ is isomorphic to an object in $\E'$.  If $\E'$ is a full, additive subcategory of $\E$ which is closed under extensions, then $\E'$ inherits the structure of an exact category: a kernel-cokernel pair in $\E'$ is admissible if and only if it is admissible in $\E$.  (See \cite[Lemma 10.20]{buhler2010exact}.)  With this inherited structure, we say $\E'$ is a \textbf{fully exact} subcategory of $\E$.

Any additive category can be given the structure of an exact category by defining the split exact sequences to be admissible.  Any abelian category can be given the structure of an exact category by defining every short exact sequence to be admissible.  A small exact category $\E$ can be embedded as a fully exact subcategory of an abelian category \cite[Theorem A.1]{buhler2010exact}.

An object $P$ in an exact category $\E$ is \textbf{projective} if, for every admissible epimorphism $p: Y \twoheadrightarrow Z$ and every morphism $f: P \rightarrow Z$, there exists a lift $g: P \rightarrow Y$ satisfying $f= pg$.  Injective objects are defined dually.  We let $\Proj{\E}$ (resp., $\Inj{\E}$) denote the full subcategory of $\E$ consisting of the projective (resp., injective) objects.  We say $\E$ has \textbf{enough projectives} if for every object $X \in \E$ there exists an admissible epimorphism $P \twoheadrightarrow X$ with $P$ projective; likewise $\E$ has \textbf{enough injectives} if for every object $X$ there is an admissible monomorphism $X \rightarrowtail I$ with $I$ injective.

We define the \textbf{projectively stable category} of $\E$ to be the category $\underline{\E}$ whose objects are those of $\E$ and whose morphisms are given by $\Hom_{\underline{\E}}(X,Y) := \Hom_{\E}(X,Y)/ \mathcal{P}(X,Y)$, where $\mathcal{P}(X,Y)$ is the additive subgroup of morphisms which factor through a projective object.  Dually, we can quotient out by morphisms factoring through injective objects to form the \textbf{injectively stable category} $\overline{\E}$.  If $\Proj{\E} = \Inj{\E}$ and $\E$ has enough projectives and injectives, we say $\E$ is a \textbf{Frobenius exact category}.  In this case, both stable categories coincide and can be given the structure of a triangulated category, which we shall denote by ($\stab{}{\E}, \Omega^{-1})$.    The suspension functor $\Omega^{-1}$ is defined by choosing for each object $X$ an admissible monomorphism $X \rightarrowtail I_X$ into an injective object; $\Omega^{-1}X$ is then defined to be the cokernel of this map.  An admissible short exact sequence $\begin{tikzcd}[column sep=small] X \arrow[r, tail, "f"] & Y \arrow[r, two heads, "g"] & Z\end{tikzcd}$ in $\E$ induces a natural map $h: Z \rightarrow \Omega^{-1}X$ in $\stab{}{\E}$, which gives rise to a triangle $\begin{tikzcd}[column sep=small] X \arrow[r, "f"] & Y \arrow[r, "g"] & Z \arrow[r, "h"] & \Omega^{-1}X\end{tikzcd}$.  The distinguished triangles in $\stab{}{E}$ are those isomorphic to triangles arising in this way.

\subsection{$N$-Complexes}

For a comprehensive introduction to $N$-complexes, we refer the reader to the work of Iyama, Kato, and Miyachi \cite{iyama2017derived}.
Let $\mathcal{A}$ be an additive category, and let $N \ge 2$ be an integer.

An \textbf{$N$-complex} over $\A$ is a sequence of objects of $X^n \in \A$, together with a sequence of morphisms (called \textbf{differentials}) $d_X^n: X^n \rightarrow X^{n+1}$ such that the composition of any $N$ successive differentials is zero.  A morphism $f^\bullet: X^\bullet \rightarrow Y^\bullet$ of $N$-complexes is a sequence of morphisms $f^n: X^n \rightarrow Y^n$ which commute with the differentials.  We denote the category of $N$-complexes over $\A$ by $C_N(\A)$.  As with complexes, we say an $N$-complex $X^\bullet$ is bounded (resp., bounded above, bounded below) if $X^n = 0$ for $|n| \gg 0$ (resp., $n\gg 0$, $n \ll 0$).  We write $C^b_N(\A)$ (resp., $C^-_N(\A)$, $C^+_N(\A)$) for the full subcategory of $C_N(\A)$ consisting of the bounded (resp., bounded above, and bounded below) $N$-complexes.  In the classical case of $N=2$, we shall always omit the subscript.

As an abbreviation, we shall write $d_X^{n,r}$ for the composition $d_X^{n+r-1}\cdots d_X^{n}$ of $r$ successive differentials, beginning with $d_X^n$.  To improve readability, in complex formulae we shall sometimes write $d_X^{\circ, r}$ when the value of $n$ is clear from context.

For $\natural \in \{\text{nothing}, b, +, -\}$, $C^\natural_N(\A)$ carries the structure of a Frobenius exact category, in which the admissible exact sequences are precisely the chainwise split exact sequences of complexes.  For $i \in \mathbb{Z}$, $1 \le k \le N$ and $X \in \A$, let $\mu^i_k(X)$ be the $N$-complex
$$\cdots \rightarrow 0 \rightarrow X \xrightarrow{id_X} \cdots \xrightarrow{id_X} X \rightarrow 0 \rightarrow \cdots$$
with $k$ terms equal to $X$, in positions $i-k+1$ through $i$.  For any $i \in \Z$ and any $X \in \A$, $\mu^i_N(X)$ is projective-injective in $C^\natural_N(\A)$, and every projective-injective object is a direct sum of complexes of this form.  \cite[Theorem 2.1]{iyama2017derived}  The stable category of $C^\natural_N(\A)$ is denoted $K^\natural_N(\A)$ and is called the \textbf{homotopy category} of $N$-complexes over $\A$.

A morphism $f: X^\bullet \rightarrow Y^\bullet$ in $C^\natural_N(\A)$ is \textbf{null-homotopic} if there exists a sequence of morphisms $h^i: X^i \rightarrow Y^{i-N+1}$ satisfying
$$f^i = \sum_{j=1}^{N} d_Y^{i+j-N, N-j} \circ h^{i+j-1} \circ d_X^{i, j-1}$$
The null-homotopic morphisms are precisely those which factor through a projective-injective object \cite[Theorem 2.3]{iyama2017derived}, hence two morphisms of complexes are equal in $K^\natural_N(\A)$ if and only if their difference is null-homotopic.

The suspension functor for the triangulated structure on $K^\natural_N(\A)$ will be denoted by $\Sigma$.  While $\Sigma$ is induced by the Frobenius structure on $C^\natural_N(\A)$, there is a useful explicit description.  Given any $N$-complex $X^\bullet$, for each $n \in \Z$, there are natural morphisms $X^\bullet \rightarrow \mu^n_N(X^n)$ and $\mu^{n+N-1}_N(X^n) \rightarrow X^\bullet$.  By taking direct sums of these morphisms, we obtain chainwise split exact sequences
$$
\begin{tikzcd}
0 \arrow[r] & X^\bullet \arrow[r, tail] & \bigoplus_{n\in \Z} \mu^{n}(X^n) \arrow[r, two heads] & \Sigma X^\bullet \arrow[r] & 0\\
0 \arrow[r] & \Sigma^{-1}X^\bullet \arrow[r, tail] & \bigoplus_{n\in \Z} \mu^{n+N-1}_N(X^n) \arrow[r, two heads] & X^\bullet \arrow[r] & 0
\end{tikzcd}
$$
whose middle terms are projective-injective.  These sequences are functorial in $X^\bullet$ and define $\Sigma$ and $\Sigma^{-1}$ on $C^\natural_N(\A)$.  (Despite the notation, these functors only become mutually inverse on $K^\natural_N(\A)$.)

Let $[n]: C^\natural_N(\A) \rightarrow C^\natural_N(\A)$ denote the standard shift of complexes, with $(X[n])^i = X^{n+i}$.  For $N > 2$, $\Sigma$ does not agree with $[1]$; however, we have the relation $\Sigma^2 \cong [N]$ in $K^\natural_N(\A)$ \cite[Theorem 2.4]{iyama2017derived}.

\subsection{Derived Category of $N$-Complexes}

In this section, let $\A$ be an abelian (not merely additive) category.  Let $N \ge 2$ be an integer.

Let $n \in \Z$, $1 \le r < N$, and $X^\bullet \in C_N(\mathcal{A})$.  Define the \textbf{$r$-th cycle} (resp., \textbf{boundary}, \textbf{homology}) \textbf{group at $n$} to be
\begin{eqnarray*}
Z^n_r(X^\bullet) &:=& ker(d^{n, r}_X)\\
B^n_r(X^\bullet) &:=& im(d^{n-N+r, N-r}_X)\\
H^n_r(X^\bullet) &:=& Z^n_r(X^\bullet)/B^n_r(X^\bullet)
\end{eqnarray*}
It is clear that $B^n_r(X^\bullet)$ is a subobject of $Z^n_r(X^\bullet)$.  Note that our notation convention for $B^n_r(X^\bullet)$ differs from that of \cite{iyama2017derived}.

For $\natural \in \{\text{nothing}, b, +, -\}$, $C^\natural_N(\A)$ is an abelian category, with all limits and colimits computed component-wise.  Given any short exact sequence $\begin{tikzcd} X^\bullet \arrow[r, hook, "f^\bullet"] & Y^\bullet \arrow[r, two heads, "g^\bullet"] & Z^\bullet \end{tikzcd}$ of $N$-complexes, there are long exact sequences in homology
$$\cdots \rightarrow H^n_r(X^\bullet) \xrightarrow{f_*} H^n_r(Y^\bullet) \xrightarrow{g_*} H^n_r(Z^\bullet) \xrightarrow{\delta} H^{n+r}_{N-r}(X^\bullet) \rightarrow \cdots$$
for all $1 \le r < N$.  \cite[Section 3]{dubois1998d}

We say that $X^\bullet \in C_N(\A)$ is \textbf{acyclic} if $H^n_r(X^\bullet) = 0$ for all $n \in \mathbb{Z}$ and $1 \le r < N$.  For $\natural \in \{\text{nothing}, b, +, -\}$, we let $C^{\natural, ac}_N(\A) \subseteq C^{\natural}_N(\A)$ and $K^{\natural, ac}_N(\A) \subseteq K^{\natural}_N(\A)$ denote the full subcategories of acyclic $N$-complexes.  $K^{\natural, ac}_N(\mathcal{A})$ is a thick subcategory of $K^{\natural}_N(\mathcal{A})$ \cite[Proposition 3.2]{iyama2017derived}.  We define the \textbf{derived category of $N$-complexes} to be the Verdier quotient $D^\natural_N(\mathcal{A}):= K^\natural_N(\mathcal{A})/K^{\natural, ac}_N(\mathcal{A})$.  As with ordinary complexes, a short exact sequence in $C_N(\A)$ induces a triangle in $D_N(\A)$ \cite[Proposition 3.7]{iyama2017derived}.

A morphism $s^\bullet$ in $K^\natural_N(\A)$ is a \textbf{quasi-isomorphism} if its cone is acyclic.  This occurs if and only if $H^n_r(s^\bullet)$ is an isomorphism for every $n \in \Z$ and all $1 \le r < N$.

Given an $N$-complex $X^\bullet$ and $n \in N$, define the \textbf{homological truncation of $X^\bullet$ at $n$} to be the complex $\sigma_{\le n}X^\bullet$ given by
$$\sigma_{\le n} X^i = \begin{cases} 0 & i > n \\ Z^i_{n+1-i}(X^\bullet) & n-N+2 \le i \le n \\ X^i & i < n-N+2 \end{cases}$$
with the differential induced by $d_X^\bullet$.  Clearly $H^i_r(\sigma_{\le n}X^\bullet) = 0$ for all $i > n$.  There is a natural inclusion of complexes $\sigma_{\le n}X^\bullet \hookrightarrow X^\bullet$ which induces an isomorphism $H^i_r(\sigma_{\le n}X^\bullet) \cong H^i_r(X^\bullet)$ for all $r$ and all $i \le n$ \cite[Lemma 3.9]{iyama2017derived}.  We define $\sigma_{> n}X^\bullet$ to be the cokernel of this morphism.   

We also define the \textbf{sharp truncation of $X^\bullet$ at $n$} to be the complex $\tau_{\le n}X^\bullet$ which is zero in degrees greater than $n$ and agrees with $X^\bullet$ in degrees less than or equal to $n$.  We define $\tau_{\ge n}X^\bullet$ analogously.

We say $X^\bullet \in D^b_N(\A)$ is \textbf{perfect} if it is isomorphic to a bounded complex of projective objects; let $D^{perf}_N(\A)$ denote the full subcategory of such objects.  In other words, $D^{perf}_N(\A)$ is the essential image of $K^b_N(\Proj{\A})$ in $D^b_N(\A)$.  It is easily verified that $D^{perf}_N(\A)$ is a thick subcategory of $D^b_N(\A)$; we define the \textbf{$N$-singularity category} to be the Verdier quotient $D^s_N(\A) := D^b_N(\A)/D^{perf}_N(\A)$.

\subsection{Gorenstein Algebras}
\label{Gorenstein Algebras}

For a self-contained treatment of the theory of Gorenstein algebras, we refer to the upcoming book by Krause \cite[Chapter 6]{krause2021homological}.  Let $A$ be a finite-dimensional associative algebra over a field $F$.  We shall assume that $A$ is a \textbf{Gorenstein} algebra; that is, $A$ has finite injective dimension as both a left and right $A$-module.  In this case, both the left and right injective dimension of $A$ coincide \cite[Lemma 6.2.1]{krause2021homological}.  If this number is zero, i.e. $A$ is injective as a right and left $A$-module, then we say that $A$ is \textbf{self-injective}; in this case the projective and injective $A$-modules coincide.  

We shall write $\rmod{A}$ and $\lmod{A}$ for the category of finitely-generated right and left $A$-modules, respectively; when we speak of an ``$A$-module'', we shall always mean an object of $\rmod{A}$ unless otherwise specified.  We shall identify $\lmod{A}$ with $\rmod{(A^{op})}$ when convenient.  Given $X \in \rmod{A}$ and $a \in A$, define $r_a: X \rightarrow X$ to be the $F$-linear map given by right multiplication by $a$; for $X \in \lmod{A}$, we similarly define $l_a: X \rightarrow X$ to be left multiplication by $a$.  If $\phi: A \xrightarrow{\sim} A$ is an $F$-algebra automorphism and $X \in \rmod{A}$, define $X_\phi \in \rmod{A}$ by $x\cdot a:= x\phi(a)$, where the right-hand multiplication is done in $X$.

We shall abbreviate $\Proj{\rmod{A}}$ by $\rproj{A}$, and $\Inj{\rmod{A}}$ by $\rinj{A}$; for left modules we use the abbreviations $\lproj{A}$ and $\linj{A}$.  We say that $X\in \rmod{A}$ is \textbf{Gorenstein projective} (resp., \textbf{Gorenstein injective}) if $\Ext^i_A(X, A) = 0$ (resp., $\Ext^i_A(DA, X) = 0$) for all $i >0$, where $D = \Hom_F(-, F)$ is the $F$-linear duality.  We denote the full subcategory of all Gorenstein projective (resp., Gorenstein injective) modules by $\Gproj{A}$ (resp., $\Ginj{A}$).

$\Gproj{A}$ forms a fully exact subcategory of the abelian category $\rmod{A}$.  In fact, $\Gproj{A}$ is a Frobenius category whose projective-injective objects are precisely $\rproj{A}$ \cite[Theorem 6.2.5]{krause2021homological}.  $D$ restricts to an equivalence $\Gproj{A}^{op} \xrightarrow{\sim} \Ginj{A^{op}}$, hence $\Ginj{A}$ is also Frobenius exact and its projective-injective objects are precisely $\rinj{A}$.  When $A$ is self-injective, note that $\Gproj{A} = \rmod{A} = \Ginj{A}$.

The \textbf{Nakayama functor} $\nu_A: \rmod{A} \rightarrow \rmod{A}$ is the composition $\nu_A := D\Hom_A(-, A) \cong X\otimes_A DA$.  The functor $\Hom_A(-,A)$ restricts to an exact duality $\Gproj{A} \xrightarrow{\sim} \Gproj{A^{op}}$ \cite[Lemma 6.2.2]{krause2021homological}, hence $\nu_A$ defines an exact equivalence $\Gproj{A} \xrightarrow{\sim} \Ginj{A}$ which descends to a triangulated equivalence of the respective stable categories.

If $A$ is self-injective, then $\nu_A$ is an exact autoequivalence of both $\rmod{A}$ and $\lmod{A}$ and preserves projective-injectives; in this case, $\nu_A$ lifts to $D^b_N(A)$ and descends to $D^s_N(A)$.  There is an $F$-algebra automorphism $\phi_{A}$, called the \textbf{Nakayama automorphism}, such that $\nu_A(X) = X_{\phi_A}$.  The Nakayama automorphism is unique up to a choice of inner automorphism.


\section{The N-Stable Category}
\label{The N-Stable Category}

\subsection{The Monomorphism Category}

Throughout this section, let $(\E, \mathcal{S})$ be an exact category.

For any integer $k\ge 1$, let $[[k]]$ denote the category corresponding to the poset $\{1< \cdots < k\}$.  For any $k \ge 0$, let $\Mor_{k}(\E)$ denote the category $\E^{[[k+1]]}$ of functors from $[[k+1]]$ to $\E$.  Namely, the objects of $\Mor_k(\E)$ are diagrams $(X_\bullet, f_\bullet) = X_1 \xrightarrow{f_1} \cdots \xrightarrow{f_k} X_{k+1}$ of $k$ composable morphisms in $\E$.  $\Mor_k(\E)$ carries a natural structure of an exact category, in which the class of admissible exact sequences is $\mathcal{S}^{[[k+1]]}$.  That is, $X_\bullet \rightarrowtail Y_\bullet \twoheadrightarrow Z_\bullet$ is admissible if and only if $X_i \rightarrowtail Y_i \twoheadrightarrow Z_i$ is admissible in $\E$ for each $1 \le i \le k+1$.  (See B\"uhler, \cite[Example 13.11]{buhler2010exact}.)  As in all diagram categories, small limits and colimits in $\Mor_k(\E)$ are computed component-wise and exist if and only if the component-wise limits and colimits exist (see, for instance, \cite[Proposition 2.15.1]{borceux1994handbook}).  Note that $\Mor_0(\E)$ recovers $\E$ as an exact category.

Mimicking our notation for $N$-complexes, given $(X_\bullet, f_\bullet) \in \Mor_k(\E)$ we will write $f_i^j := f_{i+j-1} \cdots f_i$ for the composition of $j$ successive maps in $f_\bullet$, beginning with $f_i$.  We shall let $f_i^0$ denote the identity map on $X_i$.

\begin{definition}
Let $(\E, \mathcal{S})$ be an exact category.  Let $k \ge 0$.  Let the \textbf{monomorphism subcategory} $\MMor_{k}(\E)$ be the full subcategory of $\Mor_k(\E)$ consisting of objects of the form $$X_1 \xrightarrowtail{\iota_1} X_2 \xrightarrowtail{\iota_2} \cdots \xrightarrowtail{\iota_k} X_{k+1}$$ where each $\iota_j$ is an admissible monomorphism in $\E$.

An \textbf{admissible short exact sequence} in $\MMor_k(\E)$ is any short exact sequence $X_\bullet \rightarrowtail Y_\bullet \twoheadrightarrow Z_\bullet$ which is admissible in $\Mor_k(\E)$.  Write $\MMor_k(\mathcal{S})$ for the class of admissible short exact sequences in $\MMor_k(\E)$.
\end{definition}

\begin{remark}
We could also define the \textbf{epimorphism subcategory} $\EMor_k(\E)$ to be the analogous subcategory of $\Mor_{k}(\E)$ in which every morphism appearing in the diagram is an admissible epimorphism in $\E$.  By again declaring all component-wise admissible exact sequences to be admissible, we obtain a candidate structure of exact category on $\EMor_k(\E)$.  There is a natural equivalence of categories between $\EMor_k(\E)$ and $\MMor_k(\E^{op})$ which preserves their candidate exact structures.  Thus dual versions of all results in this section apply to $\EMor_k(\E)$; the reader can easily formulate the precise statements.
\end{remark}

Our goal is to show that the above definitions give $\MMor_k(\E)$ the structure of an exact category.  The result is straightforward in the case of abelian categories.

\begin{proposition}
\label{mmor exact, abelian case}
Let $\mathcal{A}$ be an abelian category.  Then $\MMor_k(\A)$ is closed under extensions in $\Mor_k(\A)$.  In particular, $\MMor_k(\mathcal{A})$ is a fully exact subcategory of $\Mor_k(\A)$.
\end{proposition}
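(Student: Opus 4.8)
The plan is to verify directly that $\MMor_k(\A)$ is closed under extensions inside $\Mor_k(\A)$; the cited Lemma 10.20 of \cite{buhler2010exact} then immediately yields that $\MMor_k(\A)$ is a fully exact subcategory. So suppose $X_\bullet \rightarrowtail Y_\bullet \twoheadrightarrow Z_\bullet$ is an admissible (i.e.\ componentwise) short exact sequence in $\Mor_k(\A)$ with $X_\bullet, Z_\bullet \in \MMor_k(\A)$. I must show each structure map $f_i \colon Y_i \to Y_{i+1}$ is a monomorphism in $\A$. Fix $i$ and form the commutative diagram whose rows are the short exact sequences $X_i \rightarrowtail Y_i \twoheadrightarrow Z_i$ and $X_{i+1} \rightarrowtail Y_{i+1} \twoheadrightarrow Z_{i+1}$ and whose vertical maps are the structure maps of $X_\bullet$, $Y_\bullet$, $Z_\bullet$; by hypothesis the outer two verticals are monomorphisms.

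The key step is a diagram chase (valid in any abelian category, e.g.\ via the embedding theorem, or by the Snake Lemma): applying the kernel functor to the three columns gives an exact sequence
\begin{equation*}
0 \to \Ker(X_i \to X_{i+1}) \to \Ker(Y_i \to Y_{i+1}) \to \Ker(Z_i \to Z_{i+1}).
\end{equation*}
Since $X_\bullet$ and $Z_\bullet$ lie in $\MMor_k(\A)$, the outer two kernels vanish, so $\Ker(Y_i \to Y_{i+1}) = 0$, i.e.\ $f_i$ is monic. As $\A$ is abelian, every monomorphism is admissible, so $Y_\bullet \in \MMor_k(\A)$. This shows $\MMor_k(\A)$ is closed under extensions, and hence by \cite[Lemma 10.20]{buhler2010exact} it is a fully exact subcategory of $\Mor_k(\A)$, with the inherited exact structure given precisely by the componentwise-admissible short exact sequences, matching the definition of $\MMor_k(\mathcal{S})$.

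I do not expect any real obstacle here: the only subtlety is making sure the Snake/kernel argument is phrased so as to apply to an arbitrary abelian category rather than a category of modules, but this is standard and can be invoked without further comment. (The genuine work — handling a general exact category $\E$, where monomorphisms need not be admissible and one cannot take kernels freely — is deferred to the subsequent, harder propositions.)
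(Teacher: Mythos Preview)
Your proposal is correct and follows essentially the same approach as the paper: both arguments apply the Snake Lemma (or equivalently the left-exactness of kernels) componentwise to conclude that $\ker(Y_i \to Y_{i+1})$ vanishes since the outer kernels do, and then invoke closure under extensions to obtain the fully exact subcategory structure. The paper's proof is slightly more terse but identical in substance.
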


\begin{proof}
Suppose we have a short exact sequence $X_\bullet \hookrightarrow Y_\bullet \twoheadrightarrow Z_\bullet$, where $(X_\bullet, \alpha_\bullet), (Z_\bullet, \beta_\bullet) \in \MMor_k(\A)$ and $(Y_\bullet, \beta_\bullet) \in \Mor_k(\A)$.  By the Snake Lemma, for each $1 \le i \le k$ we have a short exact sequence
$$\begin{tikzcd} 0 \arrow[r] & ker(\alpha_i) \arrow[r] & ker(\beta_i) \arrow[r] & ker(\gamma_i)\end{tikzcd}$$
Since $ker(\alpha_i) = ker(\gamma_i) = 0$, it follows that $ker(\beta_i) = 0$ and $\beta_i$ is a monomorphism for all $i$.  Thus $(Y_\bullet, \beta_\bullet) \in \MMor_k(\A)$, and so $\MMor_k(\A)$ is closed under extensions.

It is clear $\MMor_k(\A)$ is a full additive subcategory of $\Mor_k(\A)$, and that the candidate exact structure on $\MMor_k(\A)$ agrees with that inherited from $\Mor_k(\A)$.  Thus $\MMor_k(\A)$ is a fully exact subcategory of $\Mor_k(\A)$.
\end{proof}

\begin{proposition}
\label{mmor exact, small}
Let $\E$ be a small exact category.  Then $\MMor_k(\E)$ is exact.
\end{proposition}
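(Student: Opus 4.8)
The plan is to reduce the small case to the already-established abelian case of Proposition \ref{mmor exact, abelian case}. By \cite[Theorem A.1]{buhler2010exact}, the small exact category $\E$ embeds as a fully exact subcategory of an abelian category $\A$; fix such an embedding $\iota \colon \E \hookrightarrow \A$, which is exact, reflects exactness, and is fully faithful. Applying $\iota$ componentwise induces a fully faithful functor $\Mor_k(\iota) \colon \Mor_k(\E) \hookrightarrow \Mor_k(\A)$, and I claim it restricts to a fully faithful exact functor $\MMor_k(\E) \hookrightarrow \MMor_k(\A)$ that exhibits $\MMor_k(\E)$ as a fully exact subcategory. Granting this, $\MMor_k(\E)$ inherits an exact structure from the fully exact subcategory $\MMor_k(\A)$ of the abelian category $\Mor_k(\A)$, and the final point is to check that this inherited structure is exactly the candidate structure $\MMor_k(\mathcal{S})$ from the definition.

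The steps I would carry out, in order: (1) Observe that $\iota$ sends admissible monomorphisms of $\E$ to monomorphisms of $\A$ (since it is exact), so $\Mor_k(\iota)$ indeed lands in $\MMor_k(\A)$ when restricted to $\MMor_k(\E)$; full faithfulness is immediate componentwise. (2) Show $\MMor_k(\E)$ is closed under extensions inside $\MMor_k(\A)$: given an admissible short exact sequence $X_\bullet \rightarrowtail Y_\bullet \twoheadrightarrow Z_\bullet$ in $\MMor_k(\A)$ with $X_\bullet, Z_\bullet$ in the image of $\MMor_k(\E)$, each component sequence $X_i \rightarrowtail Y_i \twoheadrightarrow Z_i$ is an admissible short exact sequence in $\A$ with $X_i, Z_i \in \E$, so by fully-exactness of $\E \subseteq \A$ we get $Y_i \in \E$; moreover, since $\E$ is closed under extensions in $\A$, Lemma 10.20 of \cite{buhler2010exact} tells us the sequence $X_i \rightarrowtail Y_i \twoheadrightarrow Z_i$ is admissible in $\E$, so $Y_\bullet \in \MMor_k(\E)$ and the sequence is admissible there. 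Hence $\MMor_k(\E)$ is a fully exact subcategory of $\MMor_k(\A)$, which is itself fully exact in the abelian category $\Mor_k(\A)$; composing, $\MMor_k(\E)$ is a fully exact, hence exact, subcategory, with admissible sequences those that are componentwise admissible in $\E$ — i.e. precisely $\MMor_k(\mathcal{S})$. (3) Conclude that $(\MMor_k(\E), \MMor_k(\mathcal{S}))$ is an exact category.

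The main obstacle I anticipate is step (2)'s verification that the inherited admissible sequences coincide with the candidate class $\MMor_k(\mathcal{S})$, rather than merely containing it or being contained in it: one direction (inherited $\Rightarrow$ candidate) uses that $\iota$ reflects exactness componentwise, while the other (candidate $\Rightarrow$ inherited) uses that $\iota$ preserves exactness, so both halves of the "fully exact subcategory" hypothesis on $\E \subseteq \A$ are genuinely needed. A secondary subtlety is that the embedding theorem only applies to small categories — which is exactly why this proposition is stated with the smallness hypothesis — and the subsequent elimination of smallness (promised in the introduction, and credited to Balmer) will require a different argument that I would not attempt here. Everything else (componentwise computation of kernels and cokernels, the additive structure) is routine and parallels the proof of Proposition \ref{mmor exact, abelian case}.
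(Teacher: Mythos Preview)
Your overall strategy matches the paper's exactly: embed $\E$ in an abelian category $\A$ via \cite[Theorem A.1]{buhler2010exact}, show $\MMor_k(\E)$ is closed under extensions in $\MMor_k(\A)$, and check the two exact structures agree. However, your step (2) has a genuine gap.

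You argue that, for an extension $X_\bullet \rightarrowtail Y_\bullet \twoheadrightarrow Z_\bullet$ in $\MMor_k(\A)$ with $X_\bullet, Z_\bullet \in \MMor_k(\E)$, each $Y_i$ lies in $\E$ and each componentwise sequence is admissible in $\E$, and then conclude ``so $Y_\bullet \in \MMor_k(\E)$.'' But membership in $\MMor_k(\E)$ requires more: the structure maps $\beta_i \colon Y_i \to Y_{i+1}$ must be \emph{admissible monomorphisms in $\E$}, not merely monomorphisms in $\A$. Knowing that $Y_\bullet \in \MMor_k(\A)$ tells you only the latter. A monomorphism between objects of $\E$ need not be admissible in $\E$ --- its cokernel in $\A$ might not lie in $\E$.

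The paper closes this gap with an extra argument. Applying the Snake Lemma to the columns $X_i \rightarrowtail Y_i \twoheadrightarrow Z_i$ and $X_{i+1} \rightarrowtail Y_{i+1} \twoheadrightarrow Z_{i+1}$ produces a short exact sequence
\[
\operatorname{coker}(\alpha_i) \hookrightarrow \operatorname{coker}(\beta_i) \twoheadrightarrow \operatorname{coker}(\gamma_i)
\]
in $\A$. Since $\alpha_i$ and $\gamma_i$ are admissible in $\E$, their cokernels lie in $\E$; extension closure then gives $\operatorname{coker}(\beta_i) \in \E$. Now the second row $Y_i \hookrightarrow Y_{i+1} \twoheadrightarrow \operatorname{coker}(\beta_i)$ has all three terms in $\E$, so reflection of exactness makes it admissible in $\E$, and in particular $\beta_i$ is an admissible monomorphism. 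Without this step, you have not shown $Y_\bullet \in \MMor_k(\E)$, and the closure-under-extensions claim is incomplete.
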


\begin{proof}
Since $\E$ is small, by \cite[Theorem A.1]{buhler2010exact}, there exists an abelian category $\A$ and a fully faithful exact functor $\iota: \E \rightarrow \A$ such that $\iota$ reflects exactness and $\E$ is closed under extensions in $\A$.  It is clear that $\iota$ induces an additive functor $\iota_*: \Mor_k(\E) \rightarrow \Mor_k(\A)$, which remains fully faithful and sends objects of $\MMor_k(\E)$ to $\MMor_k(\A)$.  Thus we may view $\MMor_k(\E)$ as a full, additive subcategory of $\MMor_k(\A)$; accordingly, we will suppress mention of the functor $\iota$ in our notation going forward.

We claim that $\MMor_k(\E)$ is closed under extensions in $\MMor_k(\A)$, hence is a fully exact subcategory.  Let $\begin{tikzcd} X_\bullet \arrow[r, tail, "f_\bullet"] & Y_\bullet \arrow[r, two heads, "g_\bullet"] & Z_\bullet\end{tikzcd}$ be a short exact sequence in $\MMor_k(\A)$, with $(X_\bullet, \alpha_\bullet), (Z_\bullet, \gamma_\bullet) \in \MMor_k(\E)$.  We must show that $(Y_\bullet, \beta_\bullet) \in \MMor_k(\E)$.

For each $i$, we have a short exact sequence $\begin{tikzcd} X_i \arrow[r, hook, "f_i"] & Y_i \arrow[r, two heads, "g_i"] & Z_i\end{tikzcd}$ in $\A$.  Thus $Y_i\in \E$, since $\E$ is closed under extensions.  Since the inclusion functor $\iota: \E \rightarrow \A$ reflects exactness, the above short exact sequence is admissible in $\E$.

It remains to show that the monomorphisms $\beta_i$ are admissible in $\E$.  Consider the diagram
\begin{eqnarray*}
\begin{tikzcd}
X_i \arrow[d, tail, "f_i"] \arrow[r, tail, "\alpha_i"] & X_{i+1} \arrow[d, tail, "f_{i+1}"] \arrow[r, two heads] & coker(\alpha_i) \arrow[d, dashed, hook, "\phi"]\\
Y_i \arrow[d, two heads, "g_i"] \arrow[r, hook, "\beta_i"] & Y_{i+1} \arrow[d, two heads, "g_{i+1}"] \arrow[r, two heads]& coker(\beta_i) \arrow[d, dashed, two heads, "\psi"]\\
Z_i \arrow[r, tail, "\gamma_i"]& Z_{i+1} \arrow[r, two heads]& coker(\gamma_i)
\end{tikzcd}
\end{eqnarray*}
The first two columns are admissible and exact in $\E$ by the above remarks; we construct the third column by applying the Snake Lemma and deduce that it is a short exact sequence in $\A$.  The monomorphisms $\alpha_i$ and $\gamma_i$ are admissible in $\E$, hence $coker(\alpha_i), coker(\gamma_i) \in \E$.  Since $\E$ is closed under extensions and $\iota$ reflects exactness, $coker(\beta_i) \in \E$ and the third column is an admissible short exact sequence in $\E$.  Thus all the objects in the second row lie in $\E$, hence the second row is an admissible short exact sequence in $\E$.  In particular, $\beta_i$ is an admissible monomorphism in $\E$.  Thus $(Y_\bullet, \beta_\bullet) \in \MMor_k(\E)$.

It remains to show that the structure of exact category which $\MMor_k(\E)$ inherits from $\MMor_k(\A)$ agrees with the original exact structure, i.e. that which it inherited from $\Mor_k(\E)$.  This follows immediately from the fact that $\iota$ is exact and reflects exactness.
\end{proof}

Since verifying the axioms of an exact category only involves working with finitely many objects at a time, the smallness hypothesis in the previous proposition can be removed.

\begin{lemma}
\label{exact categories are locally small}
Let $(\E, \mathcal{S})$ be an exact category, and let $E \subseteq Ob(\E)$ be a set of objects.  Then there exists a small full subcategory $\E'$ of $\E$ containing $E$, such that $(\E', \mathcal{S}')$ is an exact category, where $\mathcal{S}'$ is the set of all kernel-cokernel pairs in $\mathcal{S}$ whose objects lie in $\E'$.
\end{lemma}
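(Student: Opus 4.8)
The plan is a standard closure argument: I will exhibit $\E'$ as the union of a countable increasing chain $\E_0 \subseteq \E_1 \subseteq \cdots$ of small full subcategories of $\E$, where each term is obtained from its predecessor by adjoining a set of objects witnessing the finitary axioms of an exact category. I would take $\E_0$ to be the full subcategory of $\E$ on $E$ together with a chosen zero object, and build $\E_{n+1}$ from $\E_n$ by adjoining to $\mathrm{Ob}(\E_n)$: a chosen biproduct for each pair of objects of $\E_n$; a chosen cokernel in $\E$ of each admissible monomorphism of $\E$ between objects of $\E_n$; a chosen kernel in $\E$ of each admissible epimorphism of $\E$ between objects of $\E_n$; a chosen pushout object for each admissible monomorphism $X \rightarrowtail Y$ together with an arbitrary morphism $X \to X'$ among objects of $\E_n$ (this exists because pushouts of admissible monomorphisms along arbitrary morphisms exist in $\E$); and, dually, a chosen pullback object for each admissible epimorphism together with an arbitrary morphism into its target among objects of $\E_n$. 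Since $\E$ is locally small, each of these families is indexed by a set, so every $\E_n$ is small, and hence $\E' := \bigcup_{n \ge 0} \E_n$ is a small full subcategory of $\E$ containing $E$.

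The argument that $(\E', \mathcal{S}')$ is exact rests on two elementary consequences of fullness, which I would establish first. (i) If $X \to Y \to Z$ is a kernel--cokernel pair belonging to $\mathcal{S}$ with all three terms in $\E'$, then it is again a kernel--cokernel pair when regarded in $\E'$: the relevant universal properties are tested against a subclass of $\mathrm{Ob}(\E)$, and every mediating morphism, being a morphism of $\E$ between objects of $\E'$, already lies in $\E'$. Consequently $\mathcal{S}'$ really is a class of kernel--cokernel pairs of $\E'$, and it is closed under isomorphism, since an isomorphism in $\E'$ is one in $\E$ and $\mathcal{S}$ has this property. (ii) By the same token, any biproduct, pushout, or pullback diagram formed inside $\E$ whose objects lie in $\E'$ remains a biproduct, pushout, or pullback in $\E'$; in particular $\E'$ is additive.

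Finally I would verify the axioms of \cite[Definition 2.1]{buhler2010exact} for $(\E', \mathcal{S}')$, namely that identities are admissible monomorphisms and epimorphisms, that admissible monomorphisms (resp. epimorphisms) are closed under composition, and that the pushout (resp. pullback) of an admissible monomorphism (resp. epimorphism) along an arbitrary morphism exists and is again admissible. Each such axiom refers to a finite diagram, hence lives in some $\E_n$; one checks that the object needed to witness the axiom was adjoined in $\E_{n+1}$ (or, for the cokernel of a newly-formed admissible monomorphism, in $\E_{n+2}$), after which the statement follows from (i) and (ii). For instance, the composite of two admissible monomorphisms of $\E'$ is an admissible monomorphism of $\E$ between objects of some $\E_n$, whose cokernel lies in $\E_{n+1}$, so the resulting kernel--cokernel pair is in $\mathcal{S}'$; and given an admissible monomorphism $i \colon X \rightarrowtail Y$ and a morphism $X \to X'$ in $\E'$, the pushout $P$ formed in $\E$ lies in the chain and is a pushout in $\E'$ by (ii), while the cobase change $X' \to P$ is an admissible monomorphism of $\E$ between objects of the chain, hence admissible in $\E'$ once its cokernel is adjoined. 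I expect the only real work to be the bookkeeping in this last step --- organizing the adjunctions so that every object produced by the finitely many operations appearing in the axioms is guaranteed to lie somewhere in the chain; there is no substantive obstacle, since the only nontrivial input used from $\E$ is its own pushout and pullback axioms, which supply the witnesses.
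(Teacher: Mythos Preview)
Your proposal is correct and follows essentially the same countable-closure strategy as the paper. The only difference is cosmetic: the paper adjoins at each stage only cokernels of admissible monomorphisms and kernels of admissible epimorphisms (using \cite[Proposition 2.12]{buhler2010exact} to realize the needed pushouts and pullbacks as such), whereas you adjoin pushouts and pullbacks explicitly; either bookkeeping works.
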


\begin{proof}
Given any full subcategory $T$ of $\E$, let $C(T)$ (resp., $K(T)$) be the full subcategory of $\E$ consisting of the objects $coker(f)$ (resp., $ker(f)$), where $f$ ranges over all morphisms in $T$ which are admissible monomorphisms (resp., epimorphisms) in $\E$.  In this definition we make a single choice of $coker(f)$ or $ker(f)$ for each morphism $f$, hence $C(T)$ and $K(T)$ are small if $T$ is.  For each $X \in Ob(T)$, we choose $X$ to be the representative of both $ker(X \rightarrow 0)$ and $coker(0 \rightarrow X)$, so that $T$ is a full subcategory of both $K(T)$ and $C(T)$.  Finally, it is easily checked that if $T$ is an additive subcategory of $\E$, then so are $C(T)$ and $K(T)$.

For any finite sequence $X_1, \cdots, X_n$ of objects in $E$, choose one object of $\E$ isomorphic to $\bigoplus_{i=1}^n X_i$, and let $E_0$ be full subcategory of $\E$ consisting of all chosen objects.  Then $E_0$ is a small additive subcategory of $\E$ which can be chosen to contain $E$.  For each $i > 0$, inductively define $E_i := K(C(E_{i-1}))$, and let $\E' := \bigcup_{i=0}^\infty E_i$.  It is clear that $\E'$ is a small additive subcategory of $\E$ containing $E$.

It remains to show that $(\E', \mathcal{S}')$ is an exact category.  It is immediate that all identity morphisms are admissible epimorphisms and monomorphisms.  If $f$ and $g$ are two composable admissible monomorphisms in $E_i$, then $cok(f \circ g) \in E_{i+1}$ hence $f\circ g$ is an admissible monomorphism in $\E'$; by a dual argument, composition of admissible epimorphisms in $\E'$ also remain admissible.  Similarly, if $f: X \rightarrowtail Y$ and $g: X \rightarrow Z$ are morphisms in $E_i$ with $f$ an admissible monomorphism, then by \cite[Proposition 2.12]{buhler2010exact} the pushout $P$ of $f$ along $g$ in $\E$ fits into admissible exact sequences
\begin{eqnarray*}
\begin{tikzcd}[ampersand replacement=\&]
X \arrow[r, tail, "\begin{bmatrix} f \\ g\end{bmatrix}"] \& Y \oplus Z \arrow[two heads]{r}{\begin{bmatrix} g' & f' \end{bmatrix}}  \& P\\
Z \arrow[r, tail, "f'"] \& P  \arrow[r, two heads] \& coker(f)
\end{tikzcd}
\end{eqnarray*}
The first sequence shows that, up to isomorphism, $P \in E_{i+1}$.  Since $coker(f) \in E_{i+1}$, we have that $f'$ is an admissible monomorphism in $\E'
$.  By a dual argument, pull-backs preserve admissible epimorphisms in $\E'$. 
\end{proof}

\begin{proposition}
\label{mmor exact}
Let $\E$ be an exact category.  Then $\MMor_k(\E)$ is exact.
\end{proposition}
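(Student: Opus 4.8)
The plan is to verify the axioms [E0], [E0$^{\mathrm{op}}$], [E1], [E1$^{\mathrm{op}}$], [E2], [E2$^{\mathrm{op}}$] of \cite[Definition 2.1]{buhler2010exact} for the pair $(\MMor_k(\E), \MMor_k(\mathcal{S}))$ by reducing each of them to the small case settled in Proposition \ref{mmor exact, small}. Closure of $\MMor_k(\mathcal{S})$ under isomorphism is inherited componentwise from $\E$, and [E0], [E0$^{\mathrm{op}}$] are immediate because identity morphisms are componentwise admissible monomorphisms and admissible epimorphisms. The content is in the remaining four axioms, and the key point is that each of them asserts something about a \emph{finite} configuration. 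This is clear for [E1]/[E1$^{\mathrm{op}}$] (a composable pair of admissible monomorphisms, respectively epimorphisms). For [E2]: given an admissible monomorphism $X_\bullet \rightarrowtail Y_\bullet$ in $\MMor_k(\E)$ and a morphism $X_\bullet \to Z_\bullet$, the pushouts $P_i$ of $Y_i \leftarrow X_i \to Z_i$ exist in $\E$ and, since (co)limits in a functor category are computed componentwise, they assemble into the pushout $P_\bullet$ of $Y_\bullet \leftarrow X_\bullet \to Z_\bullet$ in $\Mor_k(\E)$; what is in doubt is only whether $P_\bullet$ lies in $\MMor_k(\E)$, the map $Z_\bullet \rightarrowtail P_\bullet$ being componentwise admissible in $\E$ by [E2] for $\E$. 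The axiom [E2$^{\mathrm{op}}$] is dual.

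For the reduction, fix one such axiom and let $E$ be the finite set of objects of $\E$ consisting of all components of the objects of $\MMor_k(\E)$ appearing in the relevant configuration — including the $P_i$ in the case of [E2] — together with the finitely many cokernels of admissible monomorphisms and kernels of admissible epimorphisms in $\E$ needed to exhibit all of this data inside a monomorphism category. By Lemma \ref{exact categories are locally small} there is a small full exact subcategory $(\E', \mathcal{S}')$ of $\E$ with $E \subseteq Ob(\E')$. Since $\E'$ contains the relevant cokernels and kernels and the inclusion $\E' \hookrightarrow \E$ is exact and reflects exactness, $\MMor_k(\E')$ is precisely the full subcategory of $\MMor_k(\E)$ on the objects all of whose components lie in $\E'$, and a short exact sequence between such objects is admissible in $\MMor_k(\E')$ if and only if it is admissible in $\MMor_k(\E)$. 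Moreover $\MMor_k(\E')$ is an exact category by Proposition \ref{mmor exact, small}, and by running the argument in the proof of that proposition — embedding $\E'$ into an abelian category $\A'$ in which $\E'$, and hence $\MMor_k(\E')$ (using also Proposition \ref{mmor exact, abelian case}), is closed under extensions — one sees that pushouts of admissible monomorphisms and pullbacks of admissible epimorphisms in $\MMor_k(\E')$ are computed componentwise; a routine comparison via the five lemma then identifies them with the componentwise pushouts and pullbacks already formed in $\E$.

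It remains to read off each axiom. For [E1], the transfer makes $X_\bullet \rightarrowtail Y_\bullet \rightarrowtail W_\bullet$ a composable pair of admissible monomorphisms in $\MMor_k(\E')$; by [E1] for $\MMor_k(\E')$ their composite is an admissible monomorphism in $\MMor_k(\E')$, and since its cokernel then lies in $\MMor_k(\E') \subseteq \MMor_k(\E)$ and the resulting short exact sequence is componentwise admissible in $\E$, the composite is an admissible monomorphism in $\MMor_k(\E)$. The axiom [E1$^{\mathrm{op}}$] is dual. For [E2], [E2] for $\MMor_k(\E')$ produces a pushout which, by the previous paragraph, is the componentwise pushout $P_\bullet$; hence $P_\bullet \in \MMor_k(\E)$, the map $Z_\bullet \rightarrowtail P_\bullet$ is admissible there, and $P_\bullet$ is the pushout in $\MMor_k(\E)$ because it is already the pushout in $\Mor_k(\E)$ and $\MMor_k(\E)$ is full. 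The axiom [E2$^{\mathrm{op}}$] is dual, which completes the argument.

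I expect the only genuinely delicate part to be the bookkeeping underlying the reduction: verifying that the exact structure of $\MMor_k(\E')$ is the restriction of that of $\MMor_k(\E)$, and that forming the small subcategory $\E'$ does not alter the pushouts and pullbacks at issue — so that the square produced inside the small exact category $\MMor_k(\E')$ genuinely solves the universal problem back in $\MMor_k(\E)$. Everything else is mechanical, and there is no homological input beyond Propositions \ref{mmor exact, abelian case} and \ref{mmor exact, small} and Lemma \ref{exact categories are locally small}.
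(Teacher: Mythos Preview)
Your proposal is correct and follows essentially the same approach as the paper: reduce each axiom to a finite configuration, invoke Lemma \ref{exact categories are locally small} to pass to a small exact subcategory $\E'$, and apply Proposition \ref{mmor exact, small}. You are more explicit than the paper about the bookkeeping—in particular about why the pushout formed in $\MMor_k(\E')$ coincides with the componentwise pushout and hence remains a pushout in $\MMor_k(\E)$—which the paper leaves to the reader after treating only the [E2] case as a representative example.
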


\begin{proof}
We let $\mathcal{S}$ denote the class of admissible exact sequences in $\E$.  If $E \subseteq \E$ is any finite set of objects, let $(\E', \mathcal{S}')$ be the small exact category containing $E$ constructed in Proposition \ref{exact categories are locally small}.  Then the inclusion functor $\E' \hookrightarrow \E$ is exact and induces a fully faithful functor $\MMor_k(\E') \hookrightarrow \MMor_k(\E)$ which maps $\MMor_k(\mathcal{S}')$ into $\MMor_k(\mathcal{S})$.  By Proposition \ref{mmor exact, small}, ($\MMor_k(\E'), \MMor_k(\mathcal{S}'))$ is an exact category.

To verify the exact category axioms, we need work only with finitely many objects of $\E$ at a time, hence exactness of $\MMor_k(\E)$ can be verified inside $\MMor_k(\E')$.  For instance, to verify that the push-out of the admissible monomorphism $f_\bullet: X_\bullet \rightarrowtail Y_\bullet$ along $g_\bullet: X_\bullet \rightarrow Z_\bullet$ is an admissible monomorphism, let $E= \{X_i, Y_i, Z_i \mid 1 \le i \le k+1\}$.  Then the pushout of $f_\bullet$ along $g_\bullet$ exists and is an admissible monomorphism in $MMor_k(\E')$, hence in $MMor_k(\E)$.  Verification of the other axioms is analogous.
\end{proof}

We close this section by providing a convenient intrinsic description of the admissible monomorphisms and epimorphisms in the monomorphism category of an abelian category.

\begin{proposition}
\label{epi mono classification}
Let $\A$ be an abelian category and let $f_\bullet : (X_\bullet, \alpha_\bullet) \rightarrow (Y_\bullet, \beta_\bullet)$ be a morphism in $\MMor_k(\A)$.  $f_\bullet$ is an admissible epimorphism if and only if each $f_i$ is an epimorphism.  $f_\bullet$ is an admissible monomorphism if and only if each $f_i$ is a monomorphism and each sub-diagram
\begin{eqnarray*}
\begin{tikzcd}
X_i \arrow[r, hook, "\alpha_i"] \arrow[d, "f_i", hook] & X_{i+1} \arrow[d, "f_{i+1}", hook]\\
Y_i \arrow[r, hook, "\beta_i"] & Y_{i+1}
\end{tikzcd}
\end{eqnarray*}
forms a pullback square in $\A$.
\end{proposition}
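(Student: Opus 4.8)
The plan is to characterize admissible monomorphisms and epimorphisms in $\MMor_k(\A)$ by comparing them against the ambient exact structure on $\Mor_k(\A)$, using Proposition \ref{mmor exact, abelian case} to pass between the two. Recall that, by definition, $f_\bullet$ is an admissible mono (resp. epi) in $\MMor_k(\A)$ precisely when it is so in $\Mor_k(\A)$, which by B\"uhler's description of the componentwise exact structure means that $f_\bullet$ fits into an admissible short exact sequence $X_\bullet \rightarrowtail Y_\bullet \twoheadrightarrow C_\bullet$ (resp. $K_\bullet \rightarrowtail X_\bullet \twoheadrightarrow Y_\bullet$) in $\Mor_k(\A)$ with $C_\bullet$ (resp. $K_\bullet$) in $\MMor_k(\A)$, and that such sequences are exactly the componentwise short exact ones. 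So the task reduces to: given $f_\bullet$ a morphism in $\MMor_k(\A)$, when is the componentwise cokernel (resp. kernel) again an object of $\MMor_k(\A)$?

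For the epimorphism statement, I would argue as follows. If $f_\bullet$ is an admissible epi, then each $f_i$ is an epi since admissibility in $\Mor_k(\A)$ is componentwise. Conversely, suppose each $f_i$ is an epi; I need the componentwise kernel $K_\bullet = (\Ker f_i)$ to lie in $\MMor_k(\A)$, i.e. the induced maps $\Ker f_i \to \Ker f_{i+1}$ are monos. But these are restrictions of the monos $\alpha_i : X_i \hookrightarrow X_{i+1}$, hence monos; so $K_\bullet \in \MMor_k(\A)$, and $K_\bullet \rightarrowtail X_\bullet \twoheadrightarrow Y_\bullet$ witnesses that $f_\bullet$ is an admissible epimorphism. (Equivalently, one can invoke Proposition \ref{mmor exact, abelian case}: $\MMor_k(\A)$ is closed under extensions in $\Mor_k(\A)$, so the kernel computed in $\Mor_k(\A)$ already lies in $\MMor_k(\A)$.)

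For the monomorphism statement, the forward direction is the easy half: if $f_\bullet$ is an admissible mono, each $f_i$ is a mono, and the componentwise cokernel $C_\bullet = (\Cok f_i)$ lies in $\MMor_k(\A)$, meaning each $\Cok f_i \to \Cok f_{i+1}$ is a mono; a diagram chase (or the $3\times 3$ lemma applied to the square together with its cokernel row) then shows each square is a pullback — indeed a square of monos whose induced map on vertical cokernels is mono is automatically a pullback. For the converse, assume each $f_i$ is mono and each square is a pullback; I must show the componentwise cokernels $\Cok f_i$ assemble into an object of $\MMor_k(\A)$, i.e. that $\Cok f_i \to \Cok f_{i+1}$ is mono for each $i$. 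This is where the pullback hypothesis does the work: the pullback square says $X_{i+1} \times_{Y_{i+1}} Y_i \cong X_i$, equivalently the map $Y_i / X_i \to Y_{i+1}/X_{i+1}$ has trivial kernel — concretely, an element of $Y_i$ mapping into $X_{i+1}$ under $\beta_i$ must already lie in $X_i$, which is exactly the pullback condition. So $\Cok f_i \hookrightarrow \Cok f_{i+1}$, giving $C_\bullet \in \MMor_k(\A)$, and the short exact sequence $X_\bullet \rightarrowtail Y_\bullet \twoheadrightarrow C_\bullet$ in $\Mor_k(\A)$ exhibits $f_\bullet$ as an admissible monomorphism.

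The main obstacle is purely bookkeeping: being careful about what "pullback square" translates to at the level of subobjects (the condition $\beta_i^{-1}(X_{i+1}) = X_i$ inside $Y_i$), and making sure the equivalence between "induced map on cokernels is mono" and "the square is a pullback" is invoked cleanly in an arbitrary abelian category rather than via element-chasing — best handled by the snake/$3\times 3$ lemma, which the paper has already been using freely. No step is genuinely hard; the content is just organizing the four implications around the single fact that admissibility in both $\Mor_k(\A)$ and $\MMor_k(\A)$ is detected componentwise together with the extension-closure from Proposition \ref{mmor exact, abelian case}.
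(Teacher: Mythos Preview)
Your proposal is correct and follows essentially the same route as the paper: reduce admissibility to whether the componentwise kernel (for epis) or cokernel (for monos) lands in $\MMor_k(\A)$, then verify the induced structure maps are monomorphisms; the paper spells out the diagram chases for the equivalence ``pullback square $\Leftrightarrow$ induced map on cokernels is mono'' that you invoke as a standard fact. One small correction: your parenthetical appeal to Proposition~\ref{mmor exact, abelian case} is misplaced---extension-closure says the \emph{middle} term of a short exact sequence lies in $\MMor_k(\A)$, not the kernel---but your primary argument (the kernel maps are restrictions of the monos $\alpha_i$) is correct and is exactly what the paper does.
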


\begin{proof}
If $f_\bullet$ is an admissible epimorphism, it follows immediately that each $f_i$ is epic.  Conversely, if each $f_i$ is an epimorphism, then $f_\bullet$ is an epimorphism in $\Mor_k(\A)$, hence it has a kernel $(K_\bullet, \iota_\bullet)$.  To prove that $f_\bullet$ is an admissible epimorphism, we must show $K_\bullet \in \MMor_k(\A)$.  We have a commutative diagram 
\begin{eqnarray*}
\begin{tikzcd}
K_i \arrow[r, "\iota_i"] \arrow[d, hook] & K_{i+1} \arrow[d, hook]\\
X_i \arrow[r, hook, "\alpha_i"] & X_{i+1}
\end{tikzcd}
\end{eqnarray*}
from which it is clear that $\iota_i$ is a monomorphism.  Thus $K_\bullet \in \MMor_k(\A)$.

If $f_\bullet$ is an admissible monomorphism, then we have a short exact sequence $\begin{tikzcd} X_\bullet \arrow[r, hook, "f_\bullet"] & Y_\bullet \arrow[r, two heads, "g_\bullet"] & Z_\bullet \end{tikzcd}$ with $(Z_\bullet, \gamma_i) \in \MMor_k(\A)$.  It follows immediately that each $f_i$ is a monomorphism.  To show $X_i$ is a pullback, consider the commutative diagram with exact columns
\begin{eqnarray*}
\begin{tikzcd}
T \arrow[drr, bend left, "\psi"] \arrow[ddr, bend right, "\phi"] \arrow[dr, "\eta", dashed]& &\\
& X_i \arrow[r, hook, "\alpha_i"] \arrow[d, hook, "f_i"] & X_{i+1} \arrow[d, hook, "f_{i+1}"]\\
& Y_i \arrow[r, hook, "\beta_i"] \arrow[d, two heads, "g_i"] & Y_{i+1} \arrow[d, two heads, "g_{i+1}"]\\
& Z_i \arrow[r, hook, "\gamma_i"] & Z_{i+1}
\end{tikzcd}
\end{eqnarray*}
where $\psi$ and $\phi$ satisfy $f_{i+1}\psi = \beta_i \phi$.  Postcomposing this equation with $g_{i+1}$, we see that $0 = g_{i+1}f_{i+1}\psi = g_{i+1}\beta_i \phi = \gamma_i g_i \phi$.  Since $\gamma_i$ is a monomorphism, $g_i \phi = 0$.  By exactness of the first column there exists a unique $\eta: T \rightarrow X_i$ such that $\phi = f_i \eta$.  An easy diagram chase yields $f_{i+1}\psi = f_{i+1} \alpha_i \eta$.  Since $f_{i+1}$ is a monomorphism, we have $\psi  = \alpha_i \eta$, hence the top square is a pullback.

Conversely, assume each $f_i$ is a monomorphism and each square in $f_\bullet$ is a pullback.  Let $(Z_\bullet, \gamma_\bullet)$ be the cokernel of $f_\bullet$ in $\Mor_k(\A)$.  We must show that $Z_\bullet \in \MMor_k(\A)$, i.e that each $\gamma_i$ is monic.  We shall construct the following commutative diagram:
\begin{eqnarray*}
\begin{tikzcd}
& X_i \arrow[r, hook, "\alpha_i"] \arrow[d, hook, "f_i", swap] \arrow[dr, phantom, near start, "\lrcorner"] & X_{i+1} \arrow[d, hook, "f_{i+1}"]\\
T' \arrow[d, two heads, "g'", swap] \arrow[r, "\phi'"] \arrow[dr, phantom, near start, "\lrcorner"] \arrow[urr, bend left = 100, "\psi", dashed] \arrow[ur, "\eta", dashed] & Y_i \arrow[r, hook, "\beta_i"] \arrow[d, two heads, "g_i"] & Y_{i+1} \arrow[d, two heads, "g_{i+1}"]\\
T \arrow[r, "\phi"] \arrow[rr, bend right, "0", swap] & Z_i \arrow[r, "\gamma_i"] & Z_{i+1}
\end{tikzcd}
\end{eqnarray*}

We start with the rightmost two squares, which are commutative with exact columns.  To show $\gamma_i$ is a monomorphism, consider $\phi: T \rightarrow Z_i$ such that $\gamma_i \phi = 0$.  Let $T'$ be the pullback of $\phi$ along $g_i$; since $g_i$ is an epimorphism, so is $g'$.  We have that $g_{i+1}\beta_i \phi' = \gamma_i \phi g' = 0$, so by exactness of the right column $\beta_i \phi' = f_{i+1} \psi$ for some $\psi:  T' \rightarrow X_{i+1}$.  Since the top right square is a pullback, we obtain a morphism $\eta: T' \rightarrow X_i$ making the diagram commute.  It follows that $\phi g' = g_i f_i \eta = 0$, hence $\phi = 0$.  Thus $\gamma_i$ is a monomorphism, $Z_\bullet \in \MMor_k(\mathcal{A})$, and $f_\bullet$ is an admissible monomorphism.
\end{proof}

\begin{remark}
Both of the above criteria can fail when $\A$ is not abelian.\\
1)  Let $A$ be the path algebra of the A3 Dynkin quiver $1 \leftarrow 2 \rightarrow 3$, and let $S_i$ be the simple module corresponding to vertex $i$.  Let $\E$ be the full subcategory of $\rmod{A}$ obtained by removing all objects isomorphic to $S_3$.  $\E$ is a full additive subcategory of $\rmod{A}$ which is closed under extensions and is therefore a fully exact subcategory of $\rmod{A}$.

Consider the objects $X_\bullet = S_1 \hookrightarrow \begin{tikzcd}[row sep=-0.6 em, column sep = -0.8 em] & S_2 & \\ S_1 & & S_3 \end{tikzcd}$ and $Y_\bullet = 0 \hookrightarrow S_2$ in $\MMor_1(\E)$.  There is an obvious component-wise epimorphism $f_\bullet: X_\bullet \rightarrow Y_\bullet$ with kernel $K_\bullet = S_1 \hookrightarrow S_1 \oplus S_3$.  Since $S_3$ is not an object of $\E$, the monomorphism defining $K_\bullet$ has no cokernel in $\E$, hence is not admissible.  Thus $K_\bullet \notin \MMor_1(\E)$, and so $f_\bullet$ is not a distinguished epimorphism in this category.

An additive category is \textbf{weakly idempotent complete} if every split monomorphism has a cokernel (or, equivalently, every split epimorphism has a kernel).  Using the dual of \cite[Corollary 7.7]{buhler2010exact}, one can show that if $\E$ is weakly idempotent complete, then the epimorphism criterion in the above proposition holds.
\\
2)  Let $B$ be the path algebra of the D4 Dynkin quiver $\begin{tikzcd}[row sep=small, column sep=0.3 em] 1 \arrow[dr] & 2 \arrow[d] & 3 \arrow[dl] \\ & 4 & \end{tikzcd}$, and let $S_i$ be the simple module corresponding to vertex $i$.  Let $\E$ be the full subcategory of $\rmod{B}$ obtained by removing all objects isomorphic to $S_3$.  As before, $\E$ is a fully exact subcategory of $\rmod{B}$.

Let $X_\bullet = S_4 \hookrightarrow \begin{tikzcd}[row sep=-0.6 em] S_1 \\ S_4 \end{tikzcd}$ and $Y_\bullet = \begin{tikzcd}[row sep=-0.6 em] S_2 \\ S_4 \end{tikzcd} \hookrightarrow \begin{tikzcd}[row sep=-0.6 em, column sep = -0.6 em] S_1 & S_2 & S_3 \\ & S_4 & \end{tikzcd}$ in $\MMor_1(\E)$.  The natural inclusions $f_i: X_i \hookrightarrow Y_i$ induce a monomorphism $f_\bullet : X_\bullet \hookrightarrow Y_\bullet$ in $\MMor_1(\E)$, and it is clear that the commutative square defined by $f_\bullet$ is a pullback.  The cokernel of $f_\bullet$ is $Z_\bullet = S_1 \hookrightarrow S_1 \oplus S_3$.  Once again, $S_3 \notin \E$, hence the monomorphism defining $Z_\bullet$ is not admissible in $\E$ and so $Z_\bullet \notin \MMor_1(\E)$.  Therefore $f_\bullet$ is not an admissible monomorphism in $\MMor_1(\E)$.

If every monomorphism in $\E$ is admissible, then the proof of monomorphism criterion in the above proposition holds with minimal changes.  This is a very strong hypothesis; we do not know if there is a weaker one.
\end{remark}


\subsection{Projective and Injective Objects}
\label{Projective and Injective Objects}

We shall classify the projective and injective objects of $\MMor_k(\mathcal{E})$.  It will be convenient to introduce some notation.

\begin{definition}
\label{fundamental objects}
For $X \in \E$ and $1 \le i \le k+1$, let $\chi_i(X)_\bullet \in \Mor_k(\E)$ be given by $0 \rightarrow \cdots \rightarrow 0 \rightarrow X \xrightarrow{id_X} \cdots \xrightarrow{id_X} X$, where the first $i-1$ objects are $0$, and the first $X$ is in position $i$.
\end{definition}

The following lemma, adapted from the proof of \cite[Proposition 2.12]{buhler2010exact}, will be useful.

\begin{lemma}[B\"uhler \cite{buhler2010exact}]
\label{direct sum lemma}
Let $\iota: X \rightarrowtail Y$ be an admissible monomorphism in $\E$, and let $f: X \rightarrow Z$ be any morphism.  Then $\begin{bmatrix} \iota \\ f \end{bmatrix}: X \rightarrowtail Y \oplus Z$ is an admissible monomorphism.  Dually, if $p: Y \twoheadrightarrow W$ is an admissible epimorphism and $g: Z \rightarrow W$ is any morphism, then $\begin{bmatrix} p & g \end{bmatrix}: Y \oplus Z \twoheadrightarrow W$ is an admissible epimorphism.
\end{lemma}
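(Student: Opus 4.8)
The statement to be proved is Lemma~\ref{direct sum lemma}, which I will refer to below simply as the direct sum lemma.

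\textbf{Approach.} The plan is to exhibit the map $\begin{bmatrix} \iota \\ f \end{bmatrix}: X \rightarrow Y \oplus Z$ as a composition of admissible monomorphisms, so that closure of admissible monomorphisms under composition (one of the exact category axioms) finishes the job. The natural candidate factorization is
$$
X \xrightarrow{\ \begin{bmatrix} \iota \\ f\end{bmatrix}\ } Y \oplus Z \ =\ X \xrightarrow{\ \begin{bmatrix} \mathrm{id}_X \\ f\end{bmatrix}\ } X \oplus Z \xrightarrow{\ \iota \oplus \mathrm{id}_Z\ } Y \oplus Z,
$$
where the first map uses only $f$ and the identity, and the second is the direct sum of $\iota$ with an identity. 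So the two things I need are: (a) the "graph" map $\begin{bmatrix} \mathrm{id}_X \\ f\end{bmatrix}: X \rightarrowtail X \oplus Z$ is an admissible monomorphism, and (b) the direct sum $\iota \oplus \mathrm{id}_Z: X \oplus Z \rightarrowtail Y \oplus Z$ of an admissible monomorphism with an identity is an admissible monomorphism.

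\textbf{Key steps.} First I would dispose of (b): the sequence $X \oplus Z \rightarrowtail Y \oplus Z \twoheadrightarrow \mathrm{coker}(\iota)$ is obtained by taking the (componentwise) direct sum of the admissible short exact sequence $X \rightarrowtail Y \twoheadrightarrow \mathrm{coker}(\iota)$ with the trivially admissible sequence $Z \xrightarrow{\mathrm{id}} Z \twoheadrightarrow 0$; direct sums of admissible short exact sequences are admissible (this is one of the basic consequences of the axioms, cf.\ \cite[Section~2]{buhler2010exact}), so $\iota \oplus \mathrm{id}_Z$ is an admissible monomorphism. Next, for (a), I would observe that $\begin{bmatrix} \mathrm{id}_X \\ f\end{bmatrix}: X \rightarrow X \oplus Z$ is a split monomorphism, split by the projection $\begin{bmatrix} \mathrm{id}_X & 0\end{bmatrix}$; it is in fact the section of a split short exact sequence $X \rightarrowtail X \oplus Z \twoheadrightarrow Z$ (with retraction adjusted by $f$), and split short exact sequences are admissible in any exact category. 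Hence $\begin{bmatrix} \mathrm{id}_X \\ f\end{bmatrix}$ is an admissible monomorphism. Composing, $\begin{bmatrix} \iota \\ f\end{bmatrix} = (\iota \oplus \mathrm{id}_Z)\circ \begin{bmatrix} \mathrm{id}_X \\ f\end{bmatrix}$ is a composition of admissible monomorphisms, hence admissible. The dual statement for $\begin{bmatrix} p & g\end{bmatrix}$ follows by applying this to $\E^{op}$, since $(\E,\mathcal{S})$ is exact iff $(\E^{op},\mathcal{S}^{op})$ is.

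\textbf{Main obstacle.} There is no serious obstacle here — this is a bookkeeping lemma — but the one point that needs care is verifying that the asserted factorization genuinely commutes, i.e.\ that $(\iota\oplus\mathrm{id}_Z)\circ\begin{bmatrix}\mathrm{id}_X\\ f\end{bmatrix} = \begin{bmatrix}\iota\\ f\end{bmatrix}$ as maps $X \to Y\oplus Z$, which is immediate from matrix multiplication $\begin{bmatrix}\iota & 0\\ 0 & \mathrm{id}_Z\end{bmatrix}\begin{bmatrix}\mathrm{id}_X\\ f\end{bmatrix} = \begin{bmatrix}\iota\\ f\end{bmatrix}$. The only subtlety worth flagging is that one should not invoke pushouts or pullbacks (which would be circular if this lemma is being used en route to \cite[Proposition~2.12]{buhler2010exact}); the argument above uses only the axioms that identities are admissible monos, that admissible monos compose, and that split and direct-sum sequences are admissible, all of which are available independently.
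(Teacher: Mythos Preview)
Your proof is correct and follows essentially the same approach as the paper: factor $\begin{bmatrix}\iota\\ f\end{bmatrix}$ through $X\oplus Z$ and use that split monomorphisms, isomorphisms, and direct sums of admissible monomorphisms are admissible. The only cosmetic difference is that the paper uses a three-step factorization $X \xrightarrow{\begin{bmatrix}\mathrm{id}\\0\end{bmatrix}} X\oplus Z \xrightarrow{\begin{bmatrix}\mathrm{id}&0\\ f&\mathrm{id}\end{bmatrix}} X\oplus Z \xrightarrow{\iota\oplus\mathrm{id}} Y\oplus Z$, separating your graph map into the canonical inclusion followed by a shear isomorphism, whereas you combine these into a single split monomorphism; both are equally valid.
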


\begin{proof}
We can factor $\begin{bmatrix} \iota \\ f \end{bmatrix}$ as the composition
\begin{eqnarray*}
\begin{tikzcd}[ampersand replacement=\&, column sep=huge]
X \arrow[r, tail, "\begin{bmatrix} id_X \\ 0\end{bmatrix}"] \& X \oplus Z \arrow[r, swap, "\sim"] \arrow{r}{\begin{bmatrix}id_X & 0 \\ f & id_Z \end{bmatrix}} \& X \oplus Z \arrow[tail]{r}{\begin{bmatrix}\iota & 0 \\ 0 & id_Z \end{bmatrix}} \& Y \oplus Z
\end{tikzcd}
\end{eqnarray*}
Split monomorphisms and isomorphisms are admissible monomorphisms, as is the direct sum of two admissible monomorphisms \cite[Proposition 2.9]{buhler2010exact}.  Thus $\begin{bmatrix} \iota \\ f \end{bmatrix}$ is the composition of three admissible monomorphisms.

The proof of the second statement is dual.
\end{proof}

\begin{proposition}
\label{inj/proj characterization}
Let $\E$ be an exact category.  Then $(I_\bullet, \iota_\bullet) \in \MMor_k(\E)$ is injective (resp., projective) if and only if each $I_i$ is injective (resp., projective) in $\E$ and each $\iota_i$ is split.
\end{proposition}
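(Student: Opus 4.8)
By duality (passing to $\E^{op}$, which exchanges $\MMor_k$ with $\EMor_k$ and projectives with injectives), it suffices to treat the injective case; the plan is to prove the statement for injectives and then invoke the $\EMor_k(\E) \simeq \MMor_k(\E^{op})$ equivalence for projectives. For the ``if'' direction, suppose each $I_i$ is injective in $\E$ and each $\iota_i$ is split. Then, writing $Q_i := \Cok(\iota_i)$ (which exists because $\iota_i$ is split), one shows $I_i \cong I_1 \oplus Q_1 \oplus \cdots \oplus Q_{i-1}$ with each $Q_j$ injective, and $(I_\bullet, \iota_\bullet)$ decomposes as a direct sum $\bigoplus_{i=1}^{k+1} \chi_i(Q_{i-1})_\bullet$ (with $Q_0 := I_1$), where $\chi_i$ is as in Definition \ref{fundamental objects}. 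So it is enough to show each $\chi_i(X)_\bullet$ is injective in $\MMor_k(\E)$ when $X$ is injective in $\E$. Given an admissible monomorphism $f_\bullet: A_\bullet \rightarrowtail B_\bullet$ in $\MMor_k(\E)$ and a morphism $g_\bullet : A_\bullet \to \chi_i(X)_\bullet$, the data of $g_\bullet$ is equivalent (since $\chi_i(X)_\bullet$ is constant equal to $X$ from position $i$ on and uses identity maps) to a single morphism $A_i \to X$; to extend $g_\bullet$ along $f_\bullet$ I need to extend $A_i \to X$ along the component $f_i : A_i \to B_i$. The point is that $f_i$ is an admissible monomorphism in $\E$: this is part of the definition of the exact structure on $\MMor_k(\E)$ via $\Mor_k(\E)$, where admissibility is componentwise. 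Since $X$ is injective in $\E$, the extension $B_i \to X$ exists, and it determines the required morphism $B_\bullet \to \chi_i(X)_\bullet$ (using the identity maps of $\chi_i(X)_\bullet$ and the differentials of $B_\bullet$; one checks the resulting maps $B_j \to X$ for $j > i$ are forced and compatible). This gives the lift $g_\bullet = \tilde g_\bullet f_\bullet$.

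For the ``only if'' direction, suppose $(I_\bullet, \iota_\bullet)$ is injective in $\MMor_k(\E)$. To see each $I_i$ is injective in $\E$: given an admissible monomorphism $u : A \rightarrowtail B$ in $\E$ and a map $A \to I_i$, apply $\chi_i(-)$... more precisely, form the admissible monomorphism $\chi_i(u)_\bullet$ — but we want to land in the $i$-th spot, so instead use the object $\psi_i(A)_\bullet := A \to \cdots \to A \to 0 \to \cdots$ supported in positions $1$ through $i$ (equivalently dualize $\chi$); there is an admissible monomorphism $\psi_i(A)_\bullet \rightarrowtail \psi_i(B)_\bullet$ and maps $\psi_i(A)_\bullet \to I_\bullet$ correspond to maps $A \to I_i$. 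Wait — I must ensure $\psi_i(B)_\bullet \in \MMor_k(\E)$ and that the inclusion is an admissible mono in $\MMor_k(\E)$; using Lemma \ref{direct sum lemma} and the pullback criterion (Proposition \ref{epi mono classification}, in the abelian case, or a direct check) this holds. Injectivity of $I_\bullet$ then yields the desired extension $B \to I_i$, so $I_i$ is injective in $\E$. To see each $\iota_i$ is split: consider the admissible monomorphism $j_\bullet : (I_\bullet, \iota_\bullet) \rightarrowtail (\tilde I_\bullet, \tilde\iota_\bullet)$ where $\tilde I_\bullet$ is the ``stupidly extended'' object $I_1 \to \cdots \to I_i \to I_i \to I_{i+1} \to \cdots$ obtained by repeating $I_i$ (so $\tilde\iota_i = \mathrm{id}$), with $j_\bullet$ the evident map that is the identity except it equals $\iota_i$ in position $i+1$ onward appropriately — concretely, realize $(I_\bullet,\iota_\bullet)$ as a retract candidate of $\bigoplus_j \chi_j(\text{injectives})$. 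Since $I_\bullet$ is injective, $j_\bullet$ splits, and chasing the splitting through position $i$ forces $\iota_i$ to be a split monomorphism in $\E$.

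The main obstacle I anticipate is the bookkeeping in the ``only if'' direction: constructing, for each $i$, the right test admissible monomorphism in $\MMor_k(\E)$ that isolates the property being proved (injectivity of $I_i$, resp. splitness of $\iota_i$) and verifying it is genuinely admissible in the monomorphism category — this requires care because admissibility in $\MMor_k(\E)$ is not just componentwise splitness/exactness in $\E$ but also involves the compatibility (pullback-type) condition of Proposition \ref{epi mono classification}. For a general exact category $\E$ one cannot invoke the abelian pullback criterion directly, so one should either embed $\E$ into an abelian category via Lemma \ref{exact categories are locally small} and Proposition \ref{mmor exact, small} (reducing to finitely many objects at a time, as in the proof of Proposition \ref{mmor exact}), or build the test objects explicitly using Lemma \ref{direct sum lemma} so that admissibility is manifest. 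I expect the cleanest route is: reduce to the small case, embed in abelian $\A$, use Proposition \ref{epi mono classification} freely there, and check the conclusions descend back to $\E$ using that the embedding reflects exactness and splitness.
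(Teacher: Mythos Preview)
There are two genuine gaps.  First, the duality reduction does not work: passing to $\E^{op}$ sends $\MMor_k(\E)$ to $\EMor_k(\E^{op})$, not to $\MMor_k(\E^{op})$, so the injective and projective halves of the proposition are \emph{not} formally dual and must be proved separately.  The asymmetry is already visible in the ``only if'' direction: once each $I_i$ is known to be injective in $\E$, splitness of the admissible monomorphism $\iota_i$ is immediate (extend $\mathrm{id}_{I_i}$ along $\iota_i$), whereas knowing each $P_i$ is projective gives no direct reason for $\iota_i$ to split --- projectivity lifts along epimorphisms, not along monomorphisms.  The paper handles the projective case with a substantially longer argument, proving that each quotient $P_{k+1}/P_i$ is projective via a carefully constructed test epimorphism in $\MMor_k(\E)$.

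Second, both of your constructions in the injective case break.  In the ``if'' direction, a morphism $A_\bullet \to \chi_i(X)_\bullet$ is \emph{not} equivalent to a single map $A_i \to X$: it is determined by $g_{k+1}\colon A_{k+1}\to X$ vanishing on the image of $A_{i-1}$, and extending $g_i$ along $f_i$ does not produce the components $\tilde g_j$ for $j>i$ (the structure maps $\beta_j$ of $B_\bullet$ point the wrong way).  The paper instead shows every admissible monomorphism \emph{out of} $\chi_i(I)_\bullet$ admits a retraction, built from a retraction of the induced monomorphism $I \rightarrowtail \operatorname{coker}(\alpha_{i-1}^{k-i+2})$.  In the ``only if'' direction, your test object $\psi_i(A)_\bullet = (A \to \cdots \to A \to 0 \to \cdots)$ is not in $\MMor_k(\E)$, since $A\to 0$ is not a monomorphism; you flag this concern but do not resolve it.  The paper uses $\chi_i(X)_\bullet \rightarrowtail \chi_i(Y)_\bullet$ as the test monomorphism, exploiting that $\chi_i$ is left adjoint to evaluation at position $i$, so that $\Hom_{\MMor_k(\E)}(\chi_i(X)_\bullet, I_\bullet) \cong \Hom_{\E}(X, I_i)$.
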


\begin{proof}
Take $(I_\bullet, \iota_\bullet) \in \MMor_k(\E)$ with each $I_i$ injective and each $\iota_i$ split.  Then we have $I_\bullet \cong \bigoplus_{i=1}^{k+1} \chi_i(I_i')_\bullet$, where $I_1' = I_1$ and $I_i' = coker(\iota_{i-1})$ for $i > 1$.  Thus it suffices to show that $\chi_i(I)_\bullet$ is injective for every injective object $I$ and each $1 \le i \le k+1$.

Fix $I$ and $i$ and suppose $f_\bullet:  \chi_i(I)_\bullet \rightarrowtail (X_\bullet, \alpha_\bullet)$ is an admissible monomorphism; we shall define a retraction $r_\bullet$.  We shall construct the following commutative diagram with admissible exact rows and columns:
\begin{eqnarray*}
\begin{tikzcd}
0 \arrow[r] \arrow[d, "f_{i-1}"] & I \arrow[r, "id_I"] \arrow[d, tail, "f_{k+1}"] & I \arrow[d, tail, dashed, "f"]\\
X_{i-1} \arrow[r, tail, "\alpha_{i-1}^{k-i+2}"] \arrow[d, "id_{X_{i-1}}"] & X_{k+1} \arrow[r, two heads, "p"] \arrow[d, two heads] & coker(\alpha_{i-1}^{k-i+2}) \arrow[d, dashed, two heads]\\
X_{i-1} \arrow[r, tail, "\beta"] & coker(f_{k+1}) \arrow[r, two heads]& coker(\beta)
\end{tikzcd}
\end{eqnarray*}
In the case where $i =1$, we define $X_0 = 0$.  The first two rows and columns are clearly exact.  Since $f_\bullet$ is an admissible monomorphism, $coker(f_\bullet) \in \MMor_k(\E)$, hence $\beta$ is an admissible monomorphism and the third row is exact.

By \cite[Exercise 3.7]{buhler2010exact}, the induced maps forming the third column are uniquely defined and form an admissible short exact sequence.  By injectivity of $I$, $f$ admits a retraction $r: coker(\alpha_{i-1}^{k-i+2}) \twoheadrightarrow I$.  For $1 \le j \le k+1$, define $r_j: X_j \rightarrow I$ to be the composition $r_j = rp\alpha_j^{k+1-j}$.  By the above diagram, $r_j = 0$ for $j \le i-1$; for such $j$ we shall therefore view $r_j$ as a morphism $X_j \rightarrow 0$.  Furthermore, for each $1 \le j < k+1$, $r_{j} = r_{j+1}\alpha_{j}$, hence $r_\bullet: X_\bullet \rightarrow \chi_i(I)_\bullet$ is a morphism in $\MMor_k(\mathcal{E})$.  The verification that $r_\bullet$ is a retraction of $f_\bullet$ is straightforward.  Thus $\chi_i(I)_\bullet$ is injective.

Conversely, suppose $(I_\bullet, \iota_\bullet)$ is injective.  To show each $I_i$ is injective, consider the diagram in $\mathcal{E}$
\begin{eqnarray*}
\begin{tikzcd}
I_i &\\
X \arrow[r, tail, "g"] \arrow[u, "f"] & Y
\end{tikzcd}
\end{eqnarray*}

We must find $h: Y \rightarrow I_i$ making the diagram commute.  Note that $g$ induces an admissible monomorphism $g_\bullet : \chi_i(X)_\bullet \rightarrowtail \chi_i(Y)_\bullet$.  $f$ also induces a morphism $f_\bullet: \chi_i(X)_\bullet \rightarrow I_\bullet$, where $f_j = 0$ for $j < i$, $f_i = f$, and $f_j = X \xrightarrow{f} I_i \rightarrowtail I_{j}$ for $j>i$.  By injectivity of $I_\bullet$, we obtain an induced map $h_\bullet : \chi_i(Y)_\bullet \rightarrow I_\bullet$ such that $f_\bullet = h_\bullet g_\bullet$.  Setting $h = h_i$, we have that $f = hg$, hence $I_i$ is injective.  It follows immediately that the $\iota_i$ are split.

We turn to the classification of the projective objects.  To show that $(P_\bullet, \iota_\bullet)$, with $P_i$ projective and $\iota_i$ split, is projective in $\MMor_k(\mathcal{E})$, it suffices to show that $\chi_i(P)_\bullet$ is projective for any $i$ and any projective $P$.  In fact, something stronger is true; we shall prove that $\chi_i(P)_\bullet$ is projective in $\Mor_k(\E)$.

Let $p_\bullet:  (X_\bullet, f_\bullet) \twoheadrightarrow \chi_i(P)_\bullet$ be an admissible epimorphism in $\Mor_k(\E)$; we shall construct a section $s_\bullet$.  Since $P$ is projective, $p_i: X_i \twoheadrightarrow P$ admits a section $s_i$.  For $j< i$ let $s_j = 0 \rightarrow X_j$, and for $j>i$ let $s_j = P \xrightarrowtail{s_i} X_i \xrightarrow{f_i^{j-i}} X_j$.  It is easy to verify that $s_\bullet: \chi_i(P)_\bullet \rightarrow X_\bullet$ is a morphism in $\MMor_k(\E)$ and a section of $p_\bullet$.  Thus $\chi_i(P)_\bullet$ is projective in $\Mor_k(\E)$, hence also in $\MMor_k(\E)$.

Conversely, let $(P_\bullet, \iota_\bullet)$ be projective in $\MMor_k(\E)$.  To show that $P_i$ is projective, consider the diagram in $\E$
\begin{eqnarray*}
\begin{tikzcd}
& P_i \arrow[d, "f"]\\
Y \arrow[r, two heads, "g"] & X
\end{tikzcd}
\end{eqnarray*}
We must find $h: P_i \rightarrow Y$ making the diagram commute.

We shall define objects $(X_\bullet, \alpha_\bullet), (Y_\bullet, \beta_\bullet) \in \MMor_k(\E)$ and morphisms $f_\bullet: P_\bullet \rightarrow X_\bullet$, $g_\bullet: Y_\bullet \twoheadrightarrow X_\bullet$ such that $X_i = X, Y_i = Y, f_i = f$, and $g_i = g$.  We start by defining $(X_\bullet, \alpha_\bullet)$ and $f_\bullet$.  For all $1 \le j \le i$, let $X_j = X$ and $f_j = f \iota_j^{i-j}$.  For all $1 \le j < i$ let $\alpha_j$ be the identity map on $X$.  For $j \ge i$ we inductively define $X_{j+1}, f_{j+1}$, and $\alpha_j$ via the pushout
\begin{eqnarray*}
\begin{tikzcd}
P_j \arrow[d, "f_j"] \arrow[r, tail, "\iota_{j}"] & P_{j+1} \arrow[d, dashed, "f_{j+1}"]\\
X_j \arrow[r, dashed, tail, "\alpha_j"] & X_{j+1}
\end{tikzcd}
\end{eqnarray*}
Admissible monomorphisms are stable under pushouts, hence $\alpha_i$ is an admissible monomorphism and $f_\bullet: P_\bullet \rightarrow X_\bullet$ is a morphism in $\MMor_k(\E)$.

For $j \le i$, let $Y_j = Y$ and $g_j = g$.  For $j >i$, let $Y_j = Y \oplus X_j$ and $g_j: Y_j \twoheadrightarrow X_j$ be given by $\begin{bmatrix} 0 & id_{X_j} \end{bmatrix}$.  For $j < i$, let $\beta_j = id_Y$.  Let $\beta_i = \begin{bmatrix} id_Y \\ \alpha_i g \end{bmatrix}$ and, for $j > i$, let $\beta_j = \begin{bmatrix} id_Y & 0\\ 0 & \alpha_j \end{bmatrix}$.  The direct sum of admissible monomorphisms is admissible, hence $\beta_j$ is an admissible monomorphism for $j>i$.  $\beta_i$ is an admissible monomorphism by Lemma \ref{direct sum lemma}, therefore $Y_\bullet \in \MMor_k(\E)$.  It is clear that $g_\bullet: Y_\bullet \rightarrow X_\bullet$ is a morphism, that each $g_i$ is an admissible epimorphism, and that $g_\bullet$ has kernel
$$ker(g) \xrightarrowtail{id} \cdots \xrightarrowtail{id}ker(g) \rightarrowtail Y \xrightarrowtail{id} \cdots \xrightarrowtail{id} Y \in \MMor_k(\E)$$
Thus $g_\bullet$ is an admissible epimorphism.

By projectivity of $P_\bullet$, we obtain a morphism $h_\bullet: P_\bullet \rightarrow Y_\bullet$ such that $f_\bullet = g_\bullet h_\bullet$.  Letting $h = h_i$, we have that $f = gh$, hence $P_i$ is projective.

It remains to show that the $\iota_i$ are split.  For any two indices $j > l$, denote $P_j/P_l := coker(\iota^{j-l}_l)$.  It suffices to show that each of the compositions $\begin{tikzcd} P_i \arrow[r, tail, "\iota_i^{k+1-i}"] & P_{k+1} \end{tikzcd}$ is split; this follows immediately if we show that $P_{k+1}/P_i$ is projective for each $1 \le i \le k$.

Suppose we have an admissible epimorphism $g:  Y \twoheadrightarrow X$ and any morphism $f: P_{k+1}/P_i \rightarrow X$; we shall construct a lift $h: P_{k+1}/P_i \rightarrow Y$.  Define $P_\bullet/P_i$ to be the object in $\MMor_k(\E)$ given by $0 \rightarrow \cdots \rightarrow 0 \rightarrow P_{i+1}/P_i \rightarrowtail \cdots \rightarrowtail P_{k+1}/P_i$, with the morphisms induced by the $\iota_j$.  There is a natural morphism $\pi_\bullet: P_\bullet \twoheadrightarrow P_\bullet/P_i$ with kernel
$$P_1 \rightarrowtail \cdots \rightarrowtail P_{i-1} \rightarrowtail P_i \xrightarrowtail{id} \cdots \xrightarrowtail{id} P_i \in \MMor_k(\E)$$
Thus $\pi_\bullet$ is an admissible epimorphism.  Moreover, $f$ and $g$ induce obvious morphisms $f_\bullet: P_\bullet/P_i \rightarrow \chi_{i+1}(X)_\bullet$, and $g_\bullet : \chi_{i+1}(Y)_\bullet \twoheadrightarrow \chi_{i+1}(X)_\bullet$.

Consider the following diagram:
\begin{eqnarray*}
\begin{tikzcd}
& P_{\bullet} \arrow[ddl, bend right, swap, dashed, "h_\bullet"] \arrow[d, two heads, "\pi_{\bullet}"]\\
& P_{\bullet}/P_i \arrow[d, "f_\bullet"] \arrow[dl, dashed, swap, "\overline{h_\bullet}"]\\
\chi_{i+1}(Y)_\bullet \arrow[r, two heads, "g_\bullet"] & \chi_{i+1}(X)_\bullet
\end{tikzcd}
\end{eqnarray*}
By projectivity of $P_\bullet$, we can lift $f_\bullet \pi_\bullet$ to $h_\bullet : P_\bullet \rightarrow \chi_{i+1}(Y)_\bullet$.  Furthermore, since $\chi_{i+1}(Y)_i = 0$, the composition $P_i \rightarrowtail P_{j} \xrightarrow{h_{j}} Y$ is zero for all $j > i$, hence $h_{j}$ factors through $\overline{h_j}: P_{j}/P_i \rightarrow Y$.  Defining $\overline{h_j} = 0$ for $j\le i$, it follows that $h_\bullet = \overline{h_\bullet} \pi_\bullet$, hence $f_\bullet \pi_\bullet  = g_\bullet \overline{h_\bullet} \pi_\bullet$.  Since $\pi_\bullet$ is an epimorphism, we obtain $f_\bullet = g_\bullet \overline{h_\bullet}$, so the above diagram commutes.  In particular, $\overline{h_{k+1}}: P_{k+1}/P_i \rightarrow Y$ is a lift of $f_{k+1} = f$, so $P_{k+1}/P_i$ is projective, as claimed.
\end{proof}


It will also be helpful to have the following characterization of projectives and injectives in $\Mor_k(\E)$.

\begin{proposition}
\label{basic inj/proj characterization} 
Let $\E$ be an exact category.  The object $(P_\bullet, \iota_\bullet) \in \Mor_k(\E)$ is projective if and only if each $P_i$ is projective in $\E$ and each $\iota_i$ is a split monomorphism.  The object $(I_\bullet, \pi_\bullet) \in \Mor_k(\E)$ is injective if and only if each $I_i$ is injective in $\E$ and each $\pi_i$ is a split epimorphism.
\end{proposition}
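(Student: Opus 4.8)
The plan is to prove the assertion about projective objects and deduce the injective one by duality. Reversing all arrows in a diagram and relabelling position $l$ as $k+2-l$ yields an isomorphism of exact categories $\Mor_k(\E)^{op} \cong \Mor_k(\E^{op})$ which interchanges injective objects with projective objects and split epimorphisms with split monomorphisms, so the injective statement in $\Mor_k(\E)$ is precisely the projective statement in $\Mor_k(\E^{op})$, with the order of the chain reversed.

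For the ``if'' direction, suppose each $P_i$ is projective and each $\iota_i$ is a split monomorphism; fix retractions $\rho_i \colon P_{i+1}\to P_i$ with $\rho_i\iota_i = \mathrm{id}$. Given an admissible epimorphism $g_\bullet \colon Y_\bullet \twoheadrightarrow Z_\bullet$ in $\Mor_k(\E)$, with structure maps $\eta_i$ on $Y_\bullet$ and $\zeta_i$ on $Z_\bullet$, and given $\phi_\bullet \colon P_\bullet \to Z_\bullet$, I would construct a lift $\psi_\bullet \colon P_\bullet \to Y_\bullet$ recursively. First lift $\phi_1$ through the admissible epimorphism $g_1$ (possible since $P_1$ is projective) to get $\psi_1$; then, having built $\psi_i$ with $g_i\psi_i = \phi_i$ commuting with the structure maps so far, choose any $\xi_i \colon P_{i+1}\to Y_{i+1}$ with $g_{i+1}\xi_i = \phi_{i+1}$ (projectivity of $P_{i+1}$) and set
\[
\psi_{i+1} := \eta_i\psi_i\rho_i + \xi_i(\mathrm{id} - \iota_i\rho_i).
\]
A direct computation, using that $g_\bullet$ and $\phi_\bullet$ are morphisms of diagrams and that $\rho_i\iota_i = \mathrm{id}$, gives $g_{i+1}\psi_{i+1} = \phi_{i+1}$ and $\psi_{i+1}\iota_i = \eta_i\psi_i$, so $\psi_\bullet$ is a morphism of diagrams lifting $\phi_\bullet$; hence $P_\bullet$ is projective.

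For the ``only if'' direction, assume $(P_\bullet,\iota_\bullet)$ is projective in $\Mor_k(\E)$. To see each $P_i$ is projective, given an admissible epimorphism $q\colon Y\twoheadrightarrow X$ in $\E$ and a morphism $f\colon P_i\to X$, I would introduce the ``co-fundamental'' diagrams $\chi^i(X)_\bullet$ and $\chi^i(Y)_\bullet$, equal to $X$ (resp.\ $Y$) with identity maps in positions $1,\dots,i$ and to $0$ afterwards. Then $q$ induces an admissible epimorphism $\chi^i(Y)_\bullet\twoheadrightarrow\chi^i(X)_\bullet$ (admissible in each component), $f$ induces a morphism $P_\bullet\to\chi^i(X)_\bullet$ (equal to $f\iota_l^{i-l}$ in positions $l\le i$ and to $0$ afterwards), and a lift $P_\bullet\to\chi^i(Y)_\bullet$ supplied by projectivity of $P_\bullet$ has $i$-th component a lift of $f$. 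To see each $\iota_i$ is split, consider $Q_\bullet := \bigoplus_{j=1}^{k+1}\chi_j(P_j)_\bullet$, whose structure maps $\kappa_l\colon \bigoplus_{j\le l}P_j\hookrightarrow\bigoplus_{j\le l+1}P_j$ are canonical direct-summand inclusions with canonical retractions $\tau_l$. The maps $\chi_j(P_j)_\bullet\to P_\bullet$ induced by $\mathrm{id}_{P_j}$ assemble into $\epsilon_\bullet\colon Q_\bullet\to P_\bullet$ whose $l$-th component is $\bigl[\iota_j^{\,l-j}\bigr]_{j\le l}$ with $j=l$ entry $\mathrm{id}_{P_l}$; by Lemma \ref{direct sum lemma} (dual part, with $\mathrm{id}_{P_l}$ in the role of the admissible epimorphism) this is an admissible epimorphism in each component, hence in $\Mor_k(\E)$. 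Since $P_\bullet$ is projective, $\epsilon_\bullet$ admits a section $\sigma_\bullet$, and then $\epsilon_i\tau_i\sigma_{i+1}\colon P_{i+1}\to P_i$ is a retraction of $\iota_i$ (using $\sigma_{i+1}\iota_i=\kappa_i\sigma_i$, $\tau_i\kappa_i=\mathrm{id}$, and $\epsilon_\bullet\sigma_\bullet=\mathrm{id}$).

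The step requiring the most care --- and the place where the argument must differ from the abelian-category case --- is checking that the epimorphisms constructed along the way are admissible in $\Mor_k(\E)$, i.e.\ component-wise admissible in $\E$, since an arbitrary split epimorphism in an exact category need not be admissible; for $\epsilon_\bullet$ this is exactly what Lemma \ref{direct sum lemma} provides, and for the map $\chi^i(Y)_\bullet\to\chi^i(X)_\bullet$ it follows at once from admissibility of $q$. Relatedly, because $\E$ is not assumed (weakly) idempotent complete, a chain of split monomorphisms need not decompose as a direct sum of fundamental objects $\chi_j(C_j)_\bullet$ (the cokernels $C_j$ may fail to exist in $\E$), which is why I would argue the ``if'' direction by the explicit recursive lifting above rather than by such a decomposition. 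The remaining verifications --- the two identities satisfied by $\psi_{i+1}$, the fact that the induced maps out of $P_\bullet$ are morphisms of diagrams, and the retraction computation --- are routine.
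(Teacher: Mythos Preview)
Your argument is correct. The duality reduction and the proof that each $P_i$ is projective (using the ``co-fundamental'' diagrams, which the paper calls $\omega_i(X)_\bullet$) are essentially identical to the paper's.

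The other two pieces differ from the paper in interesting ways. For the ``if'' direction, the paper does not lift recursively; instead it observes (this was proved inside the preceding proposition) that each $\chi_j(P)_\bullet$ with $P$ projective is projective in $\Mor_k(\E)$, and then decomposes $P_\bullet$ as $\bigoplus_j \chi_j(C_j)_\bullet$ with $C_j = \mathrm{coker}(\iota_{j-1})$. Your recursive lift is more self-contained and, as you note, sidesteps the question of whether the split monomorphisms $\iota_j$ have cokernels in a general exact category --- a genuine subtlety the paper's decomposition step glosses over. For the splitting of $\iota_i$, the paper uses a different auxiliary object: it defines $\widehat{P^{\le i}_\bullet}$ (agreeing with $P_\bullet$ up to position $i$, then repeating $P_i$ once, then zero) together with the obvious admissible epimorphism onto the sharp truncation $P^{\le i}_\bullet$, lifts the projection $P_\bullet \to P^{\le i}_\bullet$ through it, and reads off the retraction in position $i+1$. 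Your construction with $Q_\bullet = \bigoplus_j \chi_j(P_j)_\bullet$ and its section is equally valid and arguably more uniform (one auxiliary object handles all $i$ at once), at the cost of invoking Lemma~\ref{direct sum lemma} to verify admissibility of $\epsilon_\bullet$; the paper's choice makes admissibility of the relevant epimorphism evident but requires a separate construction for each $i$.
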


\begin{proof}


Let $(P_\bullet, \iota_\bullet)$ be projective in $\Mor_k(\E)$.  To show that $P_i$ is projective, choose any admissible epimorphism $g:Y \twoheadrightarrow X$ in $\E$ and any morphism $f: P_i \rightarrow X$; we must construct $h: P_i \rightarrow Y$ such that $f = gh$.  Define $\omega_i(X)_\bullet \in \Mor_k(\E)$ to be
$$X \xrightarrow{id} \cdots \xrightarrow{id} X \rightarrow 0 \rightarrow \cdots 0$$
where $X$ appears in the first $i$ positions, and similarly for $\omega_i(Y)_\bullet$.  We can extend $f$ to a morphism $f_\bullet :  P_\bullet \rightarrow \omega_i(X)_\bullet$ by setting $f_j := f \iota_j^{i-j}$ for $j \le i$ and $f_j = 0$ for $j > i$; $g$ extends to an admissible epimorphism $g_\bullet: \omega_i(X)_\bullet \twoheadrightarrow \omega_i(Y)_\bullet$ in the obvious way.  By projectivity of $P_\bullet$, we obtain a lift $h_\bullet: P_\bullet \rightarrow Y_\bullet$ such that $f_\bullet = g_\bullet h_\bullet$.  It follows that $f = h_ig$, hence $P_i$ is projective.

To show that $\iota_i$ is a split monomorphism, define
\begin{eqnarray*}
\begin{tikzcd}[column sep = small]
P^{\le i}_\bullet &[-20pt]=&[-20pt] P_1 \arrow[r, "\iota_1"] & \cdots \arrow[r, "\iota_{i-1}"] & P_i \arrow[r] & 0 \arrow[r] & 0 \arrow[r] & \cdots \arrow[r] & 0\\
\widehat{P^{\le i}_\bullet} &=& P_1 \arrow[r, "\iota_1"] & \cdots \arrow[r, "\iota_{i-1}"] & P_i \arrow[r, "id"] & P_i \arrow[r] & 0 \arrow[r] & \cdots \arrow[r] & 0
\end{tikzcd}
\end{eqnarray*}
There are natural morphisms $f_\bullet: P_\bullet \twoheadrightarrow P^{\le i}_\bullet$ and $g_\bullet: \widehat{P^{\le i}_\bullet} \twoheadrightarrow P^{\le i}_\bullet$, both of which are admissible epimorphisms.  By projectivity of $P_\bullet$, we obtain a map $r_\bullet: P_\bullet \rightarrow \widehat{P^{\le i}_\bullet}$ such that $f_\bullet = g_\bullet r_\bullet$.  For all $j \le i$, we have that $f_j = id_{P_j} = g_j$, hence $r_j = id_{P_j}$.  From the diagram
\begin{eqnarray*}
\begin{tikzcd}
P_i \arrow[r, "\iota_i"] \arrow[d, "r_i"] & P_{i+1} \arrow[d, "r_{i+1}"]\\
P_i \arrow[r, "id_{P_i}"] & P_i
\end{tikzcd}
\end{eqnarray*}
we deduce that $r_{i+1}\iota_i = id_{P_i}$, hence $\iota_i$ is a split monomorphism.

For the reverse direction, it suffices to prove that $\chi_i(P)_\bullet$ is projective in $\Mor_k(\E)$ for $1 \le i \le k+1$ and each $P \in \Proj{\E}$.  This claim was proved explicitly in our proof of Proposition \ref{inj/proj characterization}.

Note that there is an equivalence of categories $\Mor_k(\E)^{op} \xrightarrow{\sim} \Mor_k(\E^{op})$ given by $(X_\bullet, f_\bullet) \mapsto (X_{k+2-\bullet}, f_{k+1-\bullet}^{op})$.  The characterization of injective objects thus follows from the characterization of projective objects.
\end{proof}

\begin{remark}
Note that the projective-injective objects of $\MMor_k(\E)$ are precisely the projective objects of $\Mor_k(\E)$.  Dually, the projective-injective objects of $\EMor_k(\E)$ are precisely the injective objects of $\Mor_k(\E)$.
\end{remark}

If an exact category has enough injectives or projectives, so does its monomorphism category.

\begin{proposition}
\label{enough inj/proj}
Let $\E$ be an exact category.  If $\E$ has enough projectives (resp., injectives), then so does $\MMor_k(\E)$.
\end{proposition}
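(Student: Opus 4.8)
The plan is to treat the two cases (enough projectives, enough injectives) by a duality argument, handling one directly and deducing the other. I will work out the projective case; the injective case follows from the equivalence $\MMor_k(\E)$-versus-$\EMor_k(\E^{op})$ noted in the remark after Definition \ref{fundamental objects}, together with the fact that $\E$ has enough injectives iff $\E^{op}$ has enough projectives, once one observes that the projective objects of $\EMor_k(\E^{op})$ correspond to the injective objects of $\MMor_k(\E)$. (More precisely, one uses the equivalence $\Mor_k(\E)^{op}\xrightarrow{\sim}\Mor_k(\E^{op})$ from the proof of Proposition \ref{basic inj/proj characterization}, which restricts to $\MMor_k(\E)^{op}\xrightarrow{\sim}\EMor_k(\E^{op})$, and exactness in the sense of admissible sequences is preserved.)

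So suppose $\E$ has enough projectives; I must build, for an arbitrary $(X_\bullet,\alpha_\bullet)\in\MMor_k(\E)$, an admissible epimorphism $P_\bullet\twoheadrightarrow X_\bullet$ with $P_\bullet$ projective in $\MMor_k(\E)$. By Proposition \ref{inj/proj characterization}, the projective objects are exactly finite direct sums of the $\chi_i(P)_\bullet$ with $P$ projective in $\E$, and the proof of Proposition \ref{inj/proj characterization} already shows each $\chi_i(P)_\bullet$ is projective in $\Mor_k(\E)$. The natural candidate is $P_\bullet := \bigoplus_{i=1}^{k+1}\chi_i(Q_i)_\bullet$, where $Q_i\twoheadrightarrow \operatorname{cok}(\alpha_{i-1})$ is a chosen admissible epimorphism from a projective (with the convention $\alpha_0:0\rightarrowtail X_1$, so $\operatorname{cok}(\alpha_0)=X_1$). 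Each $\chi_i(Q_i)_\bullet$ maps to $X_\bullet$ by lifting $Q_i\to\operatorname{cok}(\alpha_{i-1})$ back through $X_i\twoheadrightarrow\operatorname{cok}(\alpha_{i-1})$ (possible since $Q_i$ is projective and this is an admissible epi), then propagating along the identities via the $\alpha_j$'s. Summing gives $p_\bullet:P_\bullet\to X_\bullet$.

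The key step is to verify $p_\bullet$ is an admissible epimorphism in $\MMor_k(\E)$, i.e. that its kernel lies in $\MMor_k(\E)$ and the sequence is admissible componentwise. First, componentwise: in degree $n$, $p_n:\bigoplus_{i\le n}Q_i\to X_n$ can be filtered so that the associated graded of the target is $\bigoplus_{i\le n}\operatorname{cok}(\alpha_{i-1})$ and each graded piece $Q_i\twoheadrightarrow\operatorname{cok}(\alpha_{i-1})$ is an admissible epi; using that admissible epis are closed under composition and the obscure axiom / Lemma \ref{direct sum lemma}, one concludes $p_n$ is an admissible epi. Then one must check the kernel $K_\bullet=\ker(p_\bullet)$ (which exists componentwise) has all its connecting maps $K_n\to K_{n+1}$ admissible monomorphisms. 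Here I expect the main obstacle: one cannot just invoke Proposition \ref{epi mono classification} since $\E$ is only exact, not abelian, so I would instead argue inductively as in the proofs above — filtering $P_\bullet$ by the sub-objects $\bigoplus_{i\le j}\chi_i(Q_i)_\bullet$, comparing kernels degree by degree using the $3\times3$/Snake-type constructions available in an exact category (\cite[Exercise 3.7, Proposition 2.12]{buhler2010exact}), and reducing to the already-established fact that admissible monos are stable under pushout and that cokernels of the relevant maps are again built from projectives. Alternatively, and perhaps more cleanly, reduce to the small case via Lemma \ref{exact categories are locally small} and Proposition \ref{mmor exact, small}, so that $\MMor_k(\E)$ sits inside $\MMor_k(\A)$ for an abelian $\A$ with $\E$ closed under extensions; there Proposition \ref{epi mono classification} applies directly to show $p_\bullet$ is an admissible epi in $\MMor_k(\A)$, and since all objects involved ($P_\bullet$, $X_\bullet$, and the componentwise kernel) lie in $\MMor_k(\E)$ and $\iota$ reflects exactness, it is admissible in $\MMor_k(\E)$ as well. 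I would present this second route as the main argument, as it isolates the hard bookkeeping into results already proven.
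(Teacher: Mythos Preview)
Your duality reduction for the injective case does not go through. You correctly identify the exact equivalence $\MMor_k(\E)^{op}\simeq\EMor_k(\E^{op})$, so that ``$\MMor_k(\E)$ has enough injectives'' is equivalent to ``$\EMor_k(\E^{op})$ has enough projectives''. But the statement you have actually proved is ``$\mathcal F$ has enough projectives $\Rightarrow$ $\MMor_k(\mathcal F)$ has enough projectives'', and applying this with $\mathcal F=\E^{op}$ yields information about $\MMor_k(\E^{op})$, not about $\EMor_k(\E^{op})$. Tracing the available equivalence $\EMor_k(\mathcal G)\simeq\MMor_k(\mathcal G^{op})^{op}$ shows that what you would need is ``$\mathcal G$ has enough projectives $\Rightarrow$ $\EMor_k(\mathcal G)$ has enough projectives'', and this is \emph{not} a formal consequence of the $\MMor_k$ statement: the monomorphism category is genuinely asymmetric, and the two halves of the proposition require separate arguments. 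The paper accordingly gives an independent construction for the injective case (building the injective envelope degree by degree and invoking the $3\times 3$ Lemma to see that the cokernel lies in $\MMor_k(\E)$).

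For the projective case your construction is close to the paper's (the paper takes admissible epimorphisms $P_i\twoheadrightarrow X_i$ rather than $Q_i\twoheadrightarrow\operatorname{cok}(\alpha_{i-1})$, but the resulting projective object has the same shape $\bigoplus_{j\le i}$ in degree $i$). However, your route (b) has a gap at exactly the point you flag as ``already proven'': after embedding in an abelian $\A$ and using Proposition \ref{epi mono classification} to see that the componentwise kernel $K_\bullet$ lies in $\MMor_k(\A)$, you still must show the connecting maps $K_i\to K_{i+1}$ are \emph{admissible} monomorphisms in $\E$, not merely monomorphisms in $\A$. This is not automatic from ``$\iota$ reflects exactness'', since that hypothesis concerns short exact sequences, not individual monomorphisms; one needs to exhibit $\operatorname{cok}(K_i\to K_{i+1})$ as an object of $\E$. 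This can be done (via the Snake Lemma in $\A$ it is the kernel of the composite admissible epimorphism $P_{i+1}\twoheadrightarrow X_{i+1}\twoheadrightarrow\operatorname{cok}(\alpha_i)$, hence lies in $\E$), but it is precisely the work you are trying to avoid. The paper instead argues directly inside $\E$: it interprets the admissible exact sequence $K_i\rightarrowtail P'_{i-1}\oplus P_i\twoheadrightarrow X_i$ as a bicartesian square, uses projectivity of $P'_{i-1}$ to split it, and reads off that $\beta_{i-1}:K_{i-1}\to K_i$ has the form $\begin{bmatrix} g_{i-1}\\ \gamma_{i-1}\end{bmatrix}$ with $g_{i-1}$ an admissible monomorphism, hence is admissible by Lemma \ref{direct sum lemma}.
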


\begin{proof}
Let $(X_\bullet, \alpha_\bullet) \in \MMor_k(\E)$, and suppose $\E$ has enough projectives.  Then there exist projective objects $P_i$ and admissible epimorphisms $p_i: P_i \twoheadrightarrow X_i$ for each $1 \le i \le k+1$.  Let $P'_i = \bigoplus_{j=1}^i P_j = P'_{i-1}\oplus P_i$ and let $\iota_i:  P'_i \rightarrowtail P'_{i+1}$ denote the canonical monomorphism.  Then $(P'_\bullet, \iota_\bullet)$ is projective in $\MMor_k(\E)$ by Proposition \ref{inj/proj characterization}.  Define $f_\bullet: P'_\bullet \rightarrow X_\bullet$ by $f_i := \begin{bmatrix} \alpha_1^{i-1}p_1 & \cdots & \alpha_{i-1}p_{i-1} & p_i \end{bmatrix} = \begin{bmatrix}  \alpha_{i-1}f_{i-1} & p_i \end{bmatrix}$.  Since $p_i$ is an admissible epimorphism in $\E$, by Lemma \ref{direct sum lemma} so is $f_i$, hence $f_\bullet$ is an admissible epimorphism in $\Mor_k(\E)$.  Let $g_\bullet: (K_\bullet, \beta_\bullet) \rightarrowtail (P'_\bullet, \iota_\bullet)$ be the kernel of $f_\bullet$.  To show that $f_\bullet$ is admissible in $\MMor_k(\E)$, we must show that $(K_\bullet, \beta_\bullet)$ lies in $\MMor_k(\E)$.

Write the admissible monomorphism $g_i: K_i \rightarrowtail P'_i = P'_{i-1}\oplus P_i$ as $g_i = \begin{bmatrix} \psi_i \\ -\varphi_i \end{bmatrix}$.  We have an admissible short exact sequence
\begin{eqnarray*}
\begin{tikzcd}[ampersand replacement=\&]
K_i \arrow[r, tail, "\begin{bmatrix} \psi_i \\ -\varphi_i \end{bmatrix}"] \& P'_{i-1} \oplus P_i \arrow[two heads]{rr}{\begin{bmatrix} \alpha_{i-1}f_{i-1} & p_i \end{bmatrix}} \& \& X_i
\end{tikzcd}
\end{eqnarray*}
which gives rise to the bicartesian square:
\begin{eqnarray*}
\begin{tikzcd}
 K_i \arrow[d, "\varphi_i"] \arrow[r, "\psi_i", two heads] & P'_{i-1} \arrow[d, "\alpha_{i-1} f_{i-1}"]\\
 P_i \arrow[r, two heads, "p_i"] & X_i
\end{tikzcd}
\end{eqnarray*}

Since $p_i$ is an admissible epimorphism, so is $\psi_i$.  By projectivity of $P'_{i-1}$, the top row is split exact, hence $K_i \cong P'_{i-1} \oplus ker(\psi_i)$.  Identifying the two, we can express $\psi_i$ as $\begin{bmatrix} id & 0 \end{bmatrix}$ and $\varphi_i$ as $\begin{bmatrix} \tau_i & \theta_i \end{bmatrix}$ for some $\tau_i : P'_{i-1} \rightarrow P_i$ and $\theta_i: ker(\psi_i) \rightarrow P_i$.  In particular, we can express $g_i: K_i \rightarrow P'_i$ as the matrix $\begin{bmatrix} id & 0 \\ -\tau_i & -\theta_i \end{bmatrix}$.

Let us express $\beta_{i-1}: K_{i-1} \rightarrow K_i = P'_{i-1} \oplus ker(\psi_i)$ as $\begin{bmatrix} \delta_{i-1} \\ \gamma_{i-1} \end{bmatrix}$.  We can then rewrite the identity $g_i\beta_{i-1} = \iota_{i-1}g_{i-1}$ as the commutative diagram
\begin{eqnarray*}
\begin{tikzcd}[ampersand replacement=\&]
K_{i-1} \arrow[r, "\begin{bmatrix} \delta_{i-1} \\ \gamma_{i-1} \end{bmatrix}"]  \arrow[d, tail, "g_{i-1}"] \& P'_{i-1} \oplus ker(\psi_i) \arrow[tail]{d}{\begin{bmatrix} id & 0 \\ -\tau_i & -\theta_i \end{bmatrix}} \\
P'_{i-1} \arrow[r, tail, "\begin{bmatrix} id \\ 0 \end{bmatrix}"]\& P'_{i-1}\oplus P_i
\end{tikzcd}
\end{eqnarray*}
It follows that $\delta_{i-1} = g_{i-1}$.  Since $g_{i-1}$ is an admissible monomorphism, so is $\beta_{i-1} = \begin{bmatrix} g_{i-1} \\ \gamma_{i-1} \end{bmatrix}$.  Thus $(K_\bullet, \beta_\bullet) \in \MMor_k(\E)$, and so $f_\bullet$ is an admissible epimorphism.  Therefore $\MMor_k(\E)$ has enough projectives.

Suppose now that $\E$ has enough injectives.  Let $(X_\bullet, \alpha_\bullet) \in \MMor_k(\E)$; we shall construct an admissible monomorphism $g_\bullet:  (X_\bullet, \alpha_\bullet) \rightarrowtail (I_\bullet, \iota_\bullet)$ for some injective object $(I_\bullet, \iota_\bullet)$.

Let $g_1: X_1 \rightarrowtail I_1$ be an admissible morphism from $X_1$ to an injective object in $I_1 \in \E$; we shall define the remaining admissible monomorphsims $g_i$, injective objects $I_i$, and split monomorphisms $\iota_i$ inductively.  Suppose we have constructed $g_i: X_i \rightarrowtail I_i$.  Since $\alpha_i: X_i \rightarrowtail X_{i+1}$ is an admissible monomorphism, we can lift $g_i$ to a morphism $\hat{g_i}: X_{i+1} \rightarrow I_i$.  Since $\E$ has enough injectives, there exists an admissible monomorphism $h_{i+1}: coker(\alpha_i) \rightarrowtail I'_{i+1}$ for some injective object $I'_{i+1}$.  We define $I_{i+1} := I_i \oplus I'_{i+1}$ and $g_{i+1} = \begin{bmatrix} \hat{g_i} & h_{i+1}\pi_{i+1} \end{bmatrix}$, where $\pi_{i+1}: X_{i+1} \twoheadrightarrow coker(\alpha_i)$ is the canonical map.  Let $\iota_i: I_i \rightarrowtail I_{i+1}$ be the inclusion of $I_i$ as a direct summand of $I_{i+1}$.  Since $I_i$ and $I'_{i+1}$ are injective, so is $I_{i+1}$.  It is clear that $\iota_i$ is split; it remains to check that $g_{i+1}$ is an admissible monomorphism.

We have a commutative diagram with exact rows
\begin{eqnarray*}
\begin{tikzcd}
X_i \arrow[r, tail, "\alpha_i"] \arrow[d, tail, "g_i"] & X_{i+1} \arrow[r, two heads, "\pi_{i+1}"] \arrow[d, "g_{i+1}"] & coker(\alpha_i) \arrow[d, tail, "h_{i+1}"]\\
 I_i \arrow[r, tail, "\iota_{i}"] & I_{i+1} \arrow[r, two heads] & I'_{i+1}
 \end{tikzcd}
\end{eqnarray*}
It follows from the Five Lemma \cite[Corollary 3.2]{buhler2010exact} that $g_{i+1}$ is an admissible monomorphism, hence $g_\bullet$, $I_\bullet$, and $\iota_\bullet$ are defined, and $g_\bullet$ is an admissible morphism in $\Mor_k(\E)$.

To see that $g_\bullet$ is an admissible monomorphism in $\MMor_k(\E)$, we must show that its cokernel $(Q_\bullet, \psi_\bullet)$ lies in $\MMor_k(\E)$.  We have a commutative diagram with exact columns:
\begin{eqnarray*}
\begin{tikzcd}
X_i \arrow[r, tail, "\alpha_i"] \arrow[d, tail, "g_i"] & X_{i+1} \arrow[r, two heads] \arrow[d, tail, "g_{i+1}"] & coker(\alpha_i) \arrow[d, tail, "h_{i+1}"]\\
I_i \arrow[r, tail, "\iota_i"] \arrow[d, two heads] & I_{i+1} \arrow[r, two heads] \arrow[d, two heads] & I'_i \arrow[d, two heads]\\
coker(g_i) \arrow[r, dashed, "\psi_i"] & coker(g_{i+1}) \arrow[r, dashed] & coker(h_{i+1})
\end{tikzcd}
\end{eqnarray*}
Since the first two rows are exact, by the $3\times 3$ Lemma \cite[Corollary 3.6]{buhler2010exact} the third row is also an admissible short exact sequence.  In particular, $\psi_i$ is an admissible monomorphism, hence $coker(g_\bullet) \in \MMor_k(\E)$.  Thus $g_\bullet$ is an admissible monomorphism.  $I_\bullet$ is injective by Proposition \ref{inj/proj characterization}, hence $\MMor_k(\E)$ has enough injectives.
\end{proof}

We have arrived at the main result of this section:

\begin{theorem}
\label{frobenius}
Let $\E$ be a Frobenius exact category.  Then $\MMor_k(\E)$ is Frobenius exact.
\end{theorem}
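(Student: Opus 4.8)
The plan is to assemble the theorem directly from the structural results already established in this section. Recall that $\E$ being Frobenius exact means $\Proj{\E} = \Inj{\E}$ and $\E$ has enough projectives and enough injectives. To show $\MMor_k(\E)$ is Frobenius exact, I must verify three things: (1) $\MMor_k(\E)$ is an exact category; (2) it has enough projectives and enough injectives; (3) its projective objects coincide with its injective objects. Item (1) is immediate from Proposition \ref{mmor exact}. For item (2), since $\E$ has enough projectives \emph{and} enough injectives, two applications of Proposition \ref{enough inj/proj} give that $\MMor_k(\E)$ has enough projectives and enough injectives.

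The heart of the argument is item (3), and here Proposition \ref{inj/proj characterization} does essentially all the work. That proposition says $(P_\bullet, \iota_\bullet) \in \MMor_k(\E)$ is projective if and only if each $P_i$ is projective in $\E$ and each $\iota_i$ is split; dually, $(I_\bullet, \iota_\bullet)$ is injective if and only if each $I_i$ is injective in $\E$ and each $\iota_i$ is split. Since $\Proj{\E} = \Inj{\E}$ by hypothesis, these two conditions are literally the same condition on an object of $\MMor_k(\E)$: each component lies in $\Proj{\E} = \Inj{\E}$, and each structure map is split. Hence the projective and injective objects of $\MMor_k(\E)$ coincide, which is precisely the remaining requirement in the definition of a Frobenius exact category.

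I do not anticipate a genuine obstacle here, since the theorem is really a corollary of the preceding propositions — the real content lives in Propositions \ref{mmor exact}, \ref{inj/proj characterization}, and \ref{enough inj/proj}. The only point requiring a moment's care is making sure the ``enough projectives'' and ``enough injectives'' conclusions of Proposition \ref{enough inj/proj} are both invoked (the proposition is stated as a single ``if\ldots then'' with a parenthetical dual, so one should note explicitly that a Frobenius $\E$ satisfies both hypotheses). One might also remark, as a sanity check consistent with the Remark following Proposition \ref{basic inj/proj characterization}, that the common class of projective-injective objects of $\MMor_k(\E)$ is exactly the class of projective objects of the ambient category $\Mor_k(\E)$, equivalently the direct sums of objects $\chi_i(Q)_\bullet$ with $Q \in \Proj{\E} = \Inj{\E}$. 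With these pieces in place the proof is a two-line citation of the three propositions.
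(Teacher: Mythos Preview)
Your proposal is correct and follows exactly the same approach as the paper: cite Proposition \ref{inj/proj characterization} together with $\Proj{\E} = \Inj{\E}$ to conclude $\Proj{\MMor_k(\E)} = \Inj{\MMor_k(\E)}$, and cite Proposition \ref{enough inj/proj} for enough projectives and injectives. The paper's proof is even terser (it omits explicit mention of Proposition \ref{mmor exact}, taking the exact structure as already established), but the substance is identical.
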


\begin{proof}
Since $\Proj{\E} = \Inj{\E}$, it follows immediately from Proposition \ref{inj/proj characterization} that $\Proj{\MMor_k(\E)} = \Inj{\MMor_k(\E)}$.  Since $\E$ has enough projectives and injectives, by Proposition \ref{enough inj/proj} so does $\MMor_k(\E)$.  
\end{proof}


\begin{definition}
Let $\E$ be a Frobenius exact category.  For $N \ge 2$, define the \textbf{$N$-stable category of $\E$}, denoted $\stab{N}{\E}$, to be the stable category of $\MMor_{N-2}(\E)$.
\end{definition}

Note that when $N=2$, we obtain the stable category of $\E$.


\section{Acyclic Projective-Injective $N$-Complexes}
\label{Acyclic Projective-Injective N-Complexes}

Throughout this section, let $\mathcal{A}$ denote an abelian category which is Frobenius exact.  Consider the functor $F: C^{ac}_N(\Proj{\mathcal{A}}) \rightarrow \MMor_{N-2}(\mathcal{A})$ given by
\begin{align*}
F(P^\bullet) &= Z^{0}_1(P^\bullet) \hookrightarrow  \cdots \hookrightarrow Z^{0}_{N-1}(P^\bullet)
\end{align*}

In this section, we shall prove that $F$ induces an equivalence $\overline{F}$ between $K^{ac}_N(\Proj{\mathcal{A}})$ and $\stab{N}{\mathcal{A}}$.

\subsection{Properties of $F$}

To prove that $F$ is full, we introduce the following terminology.

\begin{definition}
Let $P^\bullet, Q^\bullet \in C_N(\A)$.  Let $n \in \Z$ and let $f^n: P^n \rightarrow Q^n$ be any morphism.  We say $f^n$ \textbf{preserves cycles} if the restriction of $f^n$ to $Z^n_i(P^\bullet)$ has image in $Z^n_i(Q^\bullet)$ for each $1 \le i \le N-1$.

Similarly, we say $f^n$ \textbf{preserves boundaries} if the restriction of $f^n$ to $B^n_i(P^\bullet)$ has image in $B^n_i(Q^\bullet)$ for each $1 \le i \le N-1$.
\end{definition}

Note that if $P^\bullet$ and $Q^\bullet$ are both acyclic, then the two notions are equivalent.

\begin{proposition}
\label{fullness on complexes}
$F$ is full.
\end{proposition}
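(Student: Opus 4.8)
The plan is to show that any morphism $g_\bullet : F(P^\bullet) \to F(Q^\bullet)$ in $\MMor_{N-2}(\A)$ lifts to a chain map $f^\bullet : P^\bullet \to Q^\bullet$ of $N$-complexes. The data of $g_\bullet$ is a compatible family of maps $g_i : Z^0_i(P^\bullet) \to Z^0_i(Q^\bullet)$ for $1 \le i \le N-1$, commuting with the inclusions $Z^0_i \hookrightarrow Z^0_{i+1}$. I first observe that, because $P^\bullet$ is acyclic and consists of projectives, the differentials organize $P^\bullet$ into admissible (split, since we are in an abelian Frobenius category with enough projectives) short exact sequences $Z^n_1(P^\bullet) \rightarrowtail P^n \twoheadrightarrow Z^{n+1}_{N-1}(P^\bullet)$, and more generally $Z^n_r \rightarrowtail Z^n_{r+1} \twoheadrightarrow B$-type pieces; the key structural fact is that $d_P^n$ factors as $P^n \twoheadrightarrow Z^{n+1}_{N-1}(P^\bullet) \hookrightarrow P^{n+1}$, i.e. the image of $d_P^n$ is exactly $Z^{n+1}_{N-1}(P^\bullet)$ by acyclicity ($B^{n+1}_{N-1} = Z^{n+1}_{N-1}$). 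So to build $f^\bullet$ it suffices to produce maps $f^n : P^n \to Q^n$ compatible with these factorizations and restricting to the given $g_i$ on cycles.

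The construction proceeds by building $f^n$ one degree at a time, using projectivity of $P^n$. Concretely: at degree $n$, we already have (from degree $n-1$, or from the cycle data) a prescribed map on the subobject $Z^n_{N-1}(P^\bullet) \subseteq P^n$ — namely $g_{N-1}$ in degree $0$, and in other degrees the map induced by what has been built so far via the shift/periodicity $\Sigma^2 \cong [N]$, or more directly by re-running the argument degree by degree in both directions starting from degree $0$. Since $Z^n_{N-1}(P^\bullet) \rightarrowtail P^n$ is an admissible monomorphism (with projective, hence injective-in-$\A$-as-Frobenius, cokernel — or simply because it is split) and $Q^n \twoheadrightarrow Z^{n+1}_{N-1}(Q^\bullet)$ need not be relevant here; rather, the cleanest route is: the prescribed map $Z^n_{N-1}(P^\bullet) \to Z^n_{N-1}(Q^\bullet) \hookrightarrow Q^n$ extends along the split mono $Z^n_{N-1}(P^\bullet) \rightarrowtail P^n$ to a map $f^n : P^n \to Q^n$, using that the sequence is split (a retraction exists). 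Then one checks $f^{n+1} d_P^n = d_Q^n f^n$: both sides agree after composing with the mono $Z^{n+1}_{N-1}(Q^\bullet)\hookrightarrow Q^{n+1}$ because $d_P^n$ factors through $Z^{n+1}_{N-1}(P^\bullet)$ on which $f^{n+1}$ is, by construction, the prescribed cycle map $g$, and the $g_i$ are compatible with differentials.

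The main obstacle — and the point requiring care — is the bookkeeping ensuring that the chosen lifts $f^n$ in different degrees are mutually compatible, i.e. that when we lift in degree $n$ using a splitting, the resulting map genuinely restricts to the intended map on \emph{all} the cycle subobjects $Z^n_i(P^\bullet)$ for $i < N-1$, not just on $Z^n_{N-1}$, and that the compositions of successive differentials still vanish appropriately. One must verify that a map $P^n \to Q^n$ which restricts correctly to $Z^n_{N-1}(P^\bullet)$ and is compatible with the differential on both sides automatically restricts correctly to each $Z^n_i(P^\bullet)$; this follows because $Z^n_i(P^\bullet) = (d_P^{n,i})^{-1}(0)$ and $f$ commutes with $d_P^{n,i}$, combined with $Z^n_i(Q^\bullet)\hookrightarrow Z^n_{N-1}(Q^\bullet)$ being a mono. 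So really the only freedom is in degree $0$ (matching $g_{N-1}$) and then the rest is forced by commutativity with differentials and the factorization structure — one propagates outward, $n \to n+1$ using that $d_P^n$ is an admissible epi onto $Z^{n+1}_{N-1}(P^\bullet)$ with projective kernel allowing the lift, and $n \to n-1$ dually using injectivity. I would write the induction carefully in the positive direction and invoke the dual argument (or the $[N]$-periodicity together with $\Sigma$) for the negative direction, and finally note that the construction visibly recovers $g_\bullet$ under $F$ since $f^0$ restricts to $g_{N-1}$ on $Z^0_{N-1}$ and hence to $g_i$ on each $Z^0_i$.
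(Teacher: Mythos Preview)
Your overall strategy matches the paper's: extend $g_{N-1}$ to a map $f^0 : P^0 \to Q^0$, then propagate inductively in both directions, tracking that each $f^n$ sends cycles to cycles. However, your justification for the extension step contains a genuine error.

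You repeatedly assert that the monomorphism $Z^n_{N-1}(P^\bullet) \rightarrowtail P^n$ is split, on the grounds that its cokernel is projective. This is false: the cokernel is $P^n / Z^n_{N-1}(P^\bullet) \cong im(d_P^{n,N-1}) = B^{n+N-1}_1(P^\bullet) = Z^{n+N-1}_1(P^\bullet)$ (the last equality by acyclicity), which is an arbitrary object of $\A$ and need not be projective. For a concrete counterexample take $\A = \rmod{k[x]/(x^2)}$, $N=2$, and $P^\bullet$ the complex $\cdots \to A \xrightarrow{x} A \xrightarrow{x} A \to \cdots$; then $Z^n_1 \cong k$ is simple and $0 \to k \to A \to k \to 0$ does not split. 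The same objection applies to your earlier claim that $Z^n_1(P^\bullet) \rightarrowtail P^n \twoheadrightarrow Z^{n+1}_{N-1}(P^\bullet)$ is split.

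The fix is immediate and is exactly what the paper does: since $\A$ is Frobenius, each $Q^n$ is injective, so the map $Z^n_{N-1}(P^\bullet) \to Z^n_{N-1}(Q^\bullet) \hookrightarrow Q^n$ extends along the monomorphism $Z^n_{N-1}(P^\bullet) \hookrightarrow P^n$ by injectivity of the \emph{target}, not by splitting of the source sequence. Dually, for the leftward propagation one uses projectivity of $P^{n-1}$ to lift along the epimorphism $Q^{n-1} \twoheadrightarrow B^n_{N-1}(Q^\bullet)$. With this correction your inductive scheme goes through; the paper formalizes it by taking ``$f^n$ preserves cycles'' as the inductive hypothesis and verifying that this property propagates to both $f^{n+1}$ and $f^{n-1}$.
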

\begin{proof}
Take $P^\bullet, Q^\bullet \in C_N^{ac}(\Proj{A})$ and $f_\bullet: F(P^\bullet) \rightarrow F(Q^\bullet)$.  Using the injectivity of $Q^{0}$, lift the map $Z^{0}_{N-1}(P^\bullet) \xrightarrow{f_{N-1}} Z^{0}_{N-1}(Q^\bullet) \hookrightarrow Q^{0}$ along the monomorphism $Z^0_{N-1}(P^\bullet) \hookrightarrow P^0$ to obtain a morphism $f^{0}: P^{0} \rightarrow Q^{0}$.  Clearly, the restriction of $f^{0}$ to $Z^{0}_i(P^\bullet)$ is $f_i$, hence $f^0$ preserves cycles.

It thus suffices to extend $f^0$ to a morphism of complexes $f^\bullet: P^\bullet \rightarrow Q^\bullet$.  We claim that, given a morphism $f^n: P^n \rightarrow Q^n$ which preserves cycles, we can construct maps $f^{n\pm 1}: P^{n\pm 1} \rightarrow Q^{n\pm 1}$, both preserving cycles, such that $d_Q^i f^i = f^{i+1}d_P^i$ for $i = n-1, n$.  Once this claim established, we can extend $f^0$ to $f^\bullet$ by induction, proving fullness.

Since $f^n$ preserves cycles, we obtain an induced map on the images $\overline{f^n}: B^{n+1}_{N-1}(P^\bullet) \rightarrow B^{n+1}_{N-1}(Q^\bullet)$, which, by injectivity of $Q^{n+1}$, lifts to a map $f^{n+1}: P^{n+1} \rightarrow Q^{n+1}$.  It follows immediately that $f^{n+1}d_P^n = d_Q^n f^n$.  Precomposing with $d_P^{n+1-i, i}$, and using the fact that $f^n$ preserves boundaries, we deduce that $f^{n+1}$ preserves boundaries and therefore cycles.

Since $f^n$ preserves boundaries, it restricts to a map from $B^n_{N-1}(P^\bullet)$ to $B^n_{N-1}(Q^\bullet)$.  Using projectivity of $P^{n-1}$, we can lift this restriction to $f^{n-1}: P^{n-1} \rightarrow Q^{n-1}$.  It follows immediately that $f^{n}d_P^{n-1} = d_Q^{n-1} f^{n-1}$, hence $f$ maps $Z^{n-1}_1(P^\bullet)$ into $Z^{n-1}_1(Q^\bullet)$.  Postcomposing this equation with $d_Q^{n, i-1}$, and using induction on $i$, we deduce that $f^n$ maps $Z^{n-1}_i(P^\bullet)$ into $Z^{n-1}_i(Q^\bullet)$, hence $f^{n-1}$ preserves cycles.
\end{proof}


To show that $F$ is essentially surjective, it will be convenient to introduce the following terminology.

\begin{definition}
\label{N-acyclic array}
An \textbf{$N$-acyclic array} in $\A$ is the data of:\\
$\bullet$ objects $X^n_j$; $n \in \Z, 0 \le j \le N$\\
$\bullet$ monomorphisms $\iota^n_j: X^n_j \hookrightarrow X^n_{j+1}$; $n \in \Z, 0 \le j < N$\\
$\bullet$ epimorphisms $p^n_j: X^n_{j} \twoheadrightarrow X^{n+1}_{j-1}$; $n \in \Z, 0 < j \le N$\\
We shall write $\iota^{n, k}_j: X^n_j \hookrightarrow X^n_{j+k}$ for the composition $\iota^n_{j+k-1} \cdots \iota^n_j$ of $k$ successive $\iota^n_\bullet$, beginning at $\iota^n_j$, and similarly for $p^{n,k}_j: X^n_j \twoheadrightarrow X^{n+k}_{j-k}$.

The above data should satisfy the following three properties:\\
1)  $X^n_0 \cong 0$.\\
2)  $X^n_N$ is projective-injective.\\
3)  For all $1 \le j \le N-1$, the diagram
\begin{eqnarray*}
\begin{tikzcd}
& X^n_{j+1} \arrow[dr, two heads, "p^n_{j+1}"]&\\
X^n_j \arrow[ur, hook, "\iota^n_j"] \arrow[dr, two heads, "p^n_j"] & & X^{n+1}_j\\
& X^{n+1}_{j-1} \arrow[ur, hook, "\iota^{n+1}_{j-1}"] &
\end{tikzcd}
\end{eqnarray*}
commutes and forms a bicartesian square.

Given $X_\bullet \in \MMor_{N-2}(\A)$, we say that the $N$-acyclic array $(X^n_j, \iota^n_j, p^n_j)$ \textbf{extends} $X_\bullet$ if $X_\bullet = (X^0_\bullet, \iota^0_\bullet)$. 
\end{definition}

Given $P^\bullet \in C^{ac}_N(\Proj{\A})$, it is easily verified that we obtain an $N$-cyclic array by defining $X^n_j = Z^n_j(P^\bullet)$ (here we take $Z^n_0(P^\bullet) = 0$ and $Z^n_N(P^\bullet) = P^n$), $\iota^n_j$ to be the inclusion of kernels, and $p^n_j$ to be the morphism on kernels induced by $d^n_P$.


\begin{proposition}
\label{essential surjectivity on complexes}
$F$ is essentially surjective.
\end{proposition}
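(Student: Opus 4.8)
The plan is to show that any $X_\bullet \in \MMor_{N-2}(\A)$ is isomorphic to $F(P^\bullet)$ for some $P^\bullet \in C^{ac}_N(\Proj{\A})$ by first building an $N$-acyclic array extending $X_\bullet$, and then reading off the desired $N$-complex from that array. The key observation is that an $N$-acyclic array $(X^n_j, \iota^n_j, p^n_j)$ packages exactly the data of the cycle subobjects of an acyclic $N$-complex: given such an array, set $P^n := X^n_N$, which is projective-injective by axiom (2), and define $d^n_P : P^n \to P^{n+1}$ as the composite $X^n_N \xrightarrow{p^n_N} X^{n+1}_{N-1} \xrightarrow{\iota^{n+1}_{N-1}} X^{n+1}_N$. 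The bicartesian squares in axiom (3), assembled down the array, force $d_P^{n,N} = 0$ (since each $p^n_j$ kills the image of $\iota^n_{j-1}$ after enough compositions, and $X^n_0 = 0$), so $P^\bullet$ is an $N$-complex; moreover the same squares identify $Z^n_j(P^\bullet)$ with $X^n_j$ for $1 \le j \le N-1$, so $P^\bullet$ is acyclic and $F(P^\bullet) = (X^0_\bullet, \iota^0_\bullet) = X_\bullet$. So the whole problem reduces to: \emph{every $X_\bullet \in \MMor_{N-2}(\A)$ extends to an $N$-acyclic array.}

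To construct the array I would proceed inductively, building the columns $(X^n_\bullet)$ for $n \ge 1$ (and dually, or symmetrically, for $n \le -1$, or one could build in one direction and note the construction is self-dual via the pushout/pullback symmetry). Start with $n = 0$: we are given $X^0_1 \hookrightarrow \cdots \hookrightarrow X^0_{N-1}$; set $X^0_0 = 0$, and since $\A$ is Frobenius exact with enough injectives, choose an admissible monomorphism $X^0_{N-1} \hookrightarrow X^0_N$ into a projective-injective object $X^0_N$ (recall projectives and injectives coincide); this gives $\iota^0_{N-1}$. Now the top row of axiom (3) for $j = N-1$ demands that $X^1_{N-1}$ be a pushout of $X^0_N \hookleftarrow X^0_{N-1} \hookrightarrow X^0_N$... more precisely, reading the bicartesian square, $X^{n+1}_{j-1}$ is determined as the pushout of $p^n_j$ and $\iota^n_j$ — equivalently, since $\iota^n_j$ is an admissible mono, $X^{n+1}_{j-1} = \operatorname{cok}$ fits into $X^n_j \rightarrowtail X^n_{j+1} \twoheadrightarrow X^{n+1}_{j-1}$ is the cokernel, and $p^n_{j+1}$ is the quotient map. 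So in fact the entire $n=1$ column is forced: $X^1_{j-1} := \operatorname{cok}(\iota^0_{j-1})$ for $1 \le j \le N$, with $p^0_j$ the quotient maps and $\iota^1_{j-1}$ the induced maps between cokernels. One must check these induced maps $\iota^1_{j-1}$ are monomorphisms: this is precisely the Snake Lemma / $3\times 3$ Lemma argument (as in Proposition \ref{mmor exact, small}), using that $\iota^0_{j-1}$ and $\iota^0_j$ are admissible monos, so the induced map on cokernels is mono. One also checks $X^1_{N-1} = \operatorname{cok}(X^0_{N-1} \hookrightarrow X^0_N)$ is again projective-injective (it is the $\Omega^{-1}$ of $X^0_{N-1}$ in the Frobenius category), which feeds axiom (2) at the next stage, and that $X^1_0 = \operatorname{cok}(\mathrm{id}) = 0$ — wait, rather $X^1_0 = \operatorname{cok}(X^0_0 \to X^0_1) = \operatorname{cok}(0 \to X^0_1) = X^0_1 \ne 0$ in general, so the column indexing shifts and one iterates: the object that plays the role of "$X^n_0 = 0$" is obtained only after enough iterations. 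The honest statement is that one builds $X^{n}_j$ for $n \ge 1$ by repeatedly taking cokernels, re-indexing so that after $N-1$ steps of cokernel-taking one reaches a zero object, and at each stage the top-right corner is a fresh projective-injective obtained as $\Omega^{-1}$ of the previous one. The bicartesian condition in axiom (3) holds because a commutative square with an admissible mono on top and the induced cokernel map on the bottom, built from a $3\times 3$ diagram, is automatically bicartesian (\cite[Proposition 2.12, Exercise 3.7]{buhler2010exact}). The negative direction $n \le -1$ is handled dually using enough projectives (take kernels of admissible epis into projective-injectives), or one observes the construction for $\A^{op}$.

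The main obstacle I anticipate is bookkeeping rather than conceptual: one must set up the inductive construction so that the re-indexing is consistent, verify at each step that the induced maps on cokernels remain admissible monomorphisms and that the relevant squares are genuinely bicartesian (not merely commutative), and confirm that the projective-injectivity of $X^n_N$ propagates — the latter is exactly where Frobenius-ness of $\A$ is essential, as it guarantees $\Omega^{-1}$ of a module stays inside the subcategory and that cokernels of monos into injectives are again injective=projective. Once the array is in hand, recovering $P^\bullet$ and checking $F(P^\bullet) \cong X_\bullet$ is the routine verification sketched in the first paragraph (and is partly already noted in the paragraph preceding the Proposition, which observes that acyclic complexes of projectives give rise to $N$-acyclic arrays — here we run that correspondence backwards). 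I would also remark that $P^\bullet$ is automatically bounded in neither direction, which is fine since $F$ is defined on all of $C^{ac}_N(\Proj{\A})$, and that the columns for $|n|$ large consist of projective-injectives with the complex being "eventually a direct sum of $\mu^i_N$'s," consistent with \cite[Theorem 2.1]{iyama2017derived}.
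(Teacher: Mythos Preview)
Your overall strategy is exactly the paper's: reduce to building an $N$-acyclic array extending $X_\bullet$, then set $P^n := X^n_N$ with differential $\iota^{n+1}_{N-1}\,p^n_N$ and verify acyclicity. Your sketch of the direction ``array $\Rightarrow$ acyclic complex'' is correct and matches the paper.

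The construction of the array, however, contains real errors, not merely bookkeeping. You propose $X^1_{j-1} := \operatorname{cok}(\iota^0_{j-1}) = X^0_j/X^0_{j-1}$; but the paper's iterated pushouts give $X^{1}_j \cong X^0_{j+1}/X^0_1$, and indeed there is no natural monomorphism $X^0_2/X^0_1 \hookrightarrow X^0_3/X^0_2$ to serve as your $\iota^1_1$. The correct inductive step is: declare $X^{n+1}_0 = 0$ and $p^n_1 = 0$, then for $j = 1,\dots,N-1$ define $X^{n+1}_j$ as the pushout of $\iota^n_j$ along the \emph{already-built} $p^n_j$. For $j \ge 2$ this is not a plain cokernel of $\iota^n_j$. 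Dually for $n < 0$ one uses iterated pullbacks starting from a projective cover of $X^n_{N-1}$.

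Second, your claim that $X^1_{N-1} = \operatorname{cok}(X^0_{N-1} \hookrightarrow X^0_N) = \Omega^{-1}(X^0_{N-1})$ is projective-injective is false: the cosyzygy of an arbitrary object need not be projective. In the paper, $X^{n+1}_{N-1}$ is produced by the pushout process and is an arbitrary object of $\A$; one must then \emph{choose} a fresh embedding $\iota^{n+1}_{N-1}: X^{n+1}_{N-1} \hookrightarrow X^{n+1}_N$ into a projective-injective at every stage. This choice is precisely where ``enough injectives'' enters, and it is not supplied automatically. (Relatedly, your closing remark that the complex is ``eventually a direct sum of $\mu^i_N$'s'' is also false --- acyclic complexes of projectives are generally not eventually contractible.)
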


\begin{proof}
Let $(X_\bullet, \iota_\bullet) \in \MMor_k(\mathcal{A})$.  The proof proceeds in two steps.  First we prove that, given an $N$-acyclic array $(X^n_j, \iota^n_j, p^n_j)$ extending $X_\bullet$, there exists $P^\bullet \in C^{ac}_N(\Proj{\A})$ such that $F(P^\bullet) = X_\bullet$.  In the second step, we shall construct such an $N$-acyclic array.

Given an $N$-acyclic array $(X^n_j, \iota^n_j, p^n_j)$ extending $X_\bullet$, define maps
$$d^n := \iota^{n+1}_{N-1}p^n_N: X^n_N \rightarrow X^{n+1}_N$$
We claim that $(X^\bullet_N, d^\bullet) \in C^{ac}_N(\Proj{\A})$.  By assumption, all $p$ and $\iota$ commute, so we have that $d^{n, j} = \iota^{n+j, j}_{N-j} p^{n, j}_N$ for all $1 \le j \le N$.  In particular, $d^{n,N}$ factors through $X^{n+N}_0 = 0$, hence $X^\bullet_N \in C_N(\A)$.  Each $X^n_N$ is projective-injective by assumption.

To show that $X^\bullet_N$ is acyclic, note that
\begin{eqnarray*}
 Z^n_j(X^\bullet_N) &=& ker(d^{n,j}) = ker(\iota^{n+j, j}_{N-j} p^{n, j}_N)\\
 &= & ker(p^{n,j}_N)\\
B^n_j(X^\bullet_N) &=& im(d^{n-N+j,N-j}) = im(\iota^{n, N-j}_{j} p^{n-N+j, N-j}_N)\\
 &=& X^n_j
\end{eqnarray*}
Thus we must show that $ X^n_j = ker(p^{n,j}_N)$.  Since the composition of bicartesian squares is bicartesian, the commutative square
\begin{eqnarray*}
\begin{tikzcd}
& X^n_{j+k} \arrow[dr, two heads, "p^{n, j}_{j+k}"]&\\
X^n_j \arrow[ur, hook, "\iota^{n,k}_j"] \arrow[dr, two heads, "p^{n,j}_j"] & & X^{n+j}_{k}\\
& X^{n+j}_{0} = 0 \arrow[ur, hook, "\iota^{n+j, k}_{0}"] &
\end{tikzcd}
\end{eqnarray*}
is bicartesian for all $1 \le j \le N-1, 1 \le k \le N-j$.  This yields an exact sequence
\begin{eqnarray*}
\begin{tikzcd}
0 \arrow[r] & X^n_j \arrow[r, hook, "\iota^{n,k}_j"] & X^n_{j+k} \arrow[r, two heads, "p^{n, j}_{j+k}"] & X^{n+j}_{k} \arrow[r] & 0
\end{tikzcd}
\end{eqnarray*}
Taking $k = N-j$, we obtain that $X^n_j = ker(p^{n,j}_N)$, as desired.  Therefore $X^\bullet_N$ is acyclic.

Taking $n = 0$ and $k=1$ in the above exact sequence, we see that the morphism $Z^0_j(X^\bullet_N) \hookrightarrow Z^0_{j+1}(X^\bullet_N)$ is precisely $X^0_j \xhookrightarrow{\iota^0_j} X^0_{j+1}$.  Thus $F(X^\bullet_N) = X_\bullet$.  Thus $P^\bullet := X^\bullet_N$ satisfies the desired properties.

We must now construct an $N$-acyclic array extending $(X_\bullet, \iota_\bullet)$.  For $1 \le j \le N-1$, let $X^0_j = X_j$ and let $X^0_0 = 0$.  For $1 \le j \le N-2$, let $\iota^0_j = \iota_j$ and let $\iota^0_0: 0 \hookrightarrow X_1$ be the zero map. Define $\iota^0_{N-1}: X^0_{N-1} \hookrightarrow X^0_{N}$ to be the inclusion of $X^0_{N-1}$ into a projective-injective object $X^0_{N-1}$.

Suppose for some $n\ge 0$ we have constructed, for all $j$, $X^n_j$ and $\iota^n_j$.  Define $X^{n+1}_0 = 0$ and $p^n_1: X^n_1 \twoheadrightarrow 0$.  Next, inductively define $X^{n+1}_j$, $i^{n+1}_{j-1}$, and $p^{n}_{j+1}$ for $1 \le j \le N-1$ via iterated pushouts
\begin{eqnarray}
\label{bicartesian}
\begin{tikzcd}
& X^n_{j+1} \arrow[dr, two heads, dashed, "p^n_{j+1}"] & \\
X^n_j \arrow[ur, hook, "\iota^n_j"] \arrow[dr, two heads, "p^n_j"]  & & X^{n+1}_{j}\\
& X^{n+1}_{j-1} \arrow[ur, hook, dashed, "\iota^{n+1}_j"] &
\end{tikzcd}
\end{eqnarray}
In abelian categories, pushouts preserve both epimorphisms and monomorphisms, hence the newly defined maps $p$ and $\iota$ are also epimorphisms and monomorphisms, respectively.  Finally, define $\iota^{n+1}_{N-1}: X^{n+1}_{N-1} \hookrightarrow X^{n+1}_N$ to be an inclusion of $X^{n+1}_{N-1}$ into a projective-injective object $X^{n+1}_N$.  Note that we have now constructed $X^{n+1}_j$, $\iota^{n+1}_j$, and $p^n_j$ for all $j$.  Proceeding inductively, we can define $X^n_j$, $\iota^n_j$, and $p^n_j$ for all $n \ge 0$ and for all $j$.

For $n \le 0$, the construction is dual.  Having defined $X^n_j$ and $\iota^n_j$ for all $j$, define $p^{n-1}_N: X^{n-1}_N \twoheadrightarrow X^n_{N-1}$ to be a surjection from a projective-injective object $X^{n-1}_N$.  Then $X^{n-1}_j, i^{n-1}_j,$ and $p^{n-1}_j$ are defined via iterated pullbacks for $N-1 \ge j \ge 1$.  Finally, define $X^{n-1}_0  =0$ and $\iota^{n-1}_0$ to be the zero map.

It is immediate that $(X^n_j, \iota^n_j, p^n_j)$ satisfies properties 1 and 2 of Definition \ref{N-acyclic array}.  To see that property 3 holds, note that each commutative square in (\ref{bicartesian}) is, by construction, either a pullback ($n < 0$) or pushout ($n\ge 0$).  But since the $\iota$ are monomorphisms and the $p$ are epimorphisms, any such pullback or pushout square is automatically bicartesian.  Thus the data we have constructed form an $N$-acyclic array which extends $(X_\bullet, \iota_\bullet)$.
\end{proof}

The category $C^{ac}_N(\Proj{\A})$ inherits the structure of an exact category from $C_N(\A)$.


\begin{proposition}
$C^{ac}_N(\Proj{\A})$ is a fully exact subcategory of $C_N(\A)$.  An object $P^\bullet \in C^{ac}_N(\Proj{\A})$ is projective (resp., injective) if and only if it is projective (resp., injective) in $C_N(\A)$.  Thus $C^{ac}_N(\Proj{\A})$ is Frobenius exact.
\end{proposition}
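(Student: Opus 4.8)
The plan is to exploit that $C_N(\A)$, carrying its Frobenius structure of chainwise split short exact sequences, already comes equipped with enough projective-injectives (the direct sums of the complexes $\mu^i_N(X)$) together with the explicit short exact sequences defining $\Sigma^{\pm 1}$; the whole argument consists in checking that these data restrict to the subcategory $C^{ac}_N(\Proj{\A})$. First I would verify that $C^{ac}_N(\Proj{\A})$ is a full additive subcategory of $C_N(\A)$ closed under extensions, so that \cite[Lemma 10.20]{buhler2010exact} endows it with the structure of a fully exact subcategory. Fullness and additivity are immediate: $0$ is an acyclic complex of projectives, and a degreewise direct sum of acyclic complexes of projectives is again one. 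For closure under extensions, let $0 \to P^\bullet \to E^\bullet \to Q^\bullet \to 0$ be a chainwise split short exact sequence in $C_N(\A)$ with $P^\bullet, Q^\bullet \in C^{ac}_N(\Proj{\A})$: being degreewise split, $E^n \cong P^n \oplus Q^n$ is projective in $\A$, and being a componentwise short exact sequence of $N$-complexes it yields, for every $r$, a long exact homology sequence \cite[Section 3]{dubois1998d} forcing $H^n_r(E^\bullet) = 0$. Hence the admissible short exact sequences of $C^{ac}_N(\Proj{\A})$ are precisely the chainwise split short exact sequences of $C_N(\A)$ all of whose terms are acyclic complexes of projectives.

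Next comes the characterization of projective (equivalently, by the dual argument, injective) objects. One inclusion is formal: every admissible epimorphism of $C^{ac}_N(\Proj{\A})$ is in particular an admissible epimorphism of $C_N(\A)$ and the subcategory is full, so every lifting problem can be solved in $C_N(\A)$ and an object of $C^{ac}_N(\Proj{\A})$ projective in $C_N(\A)$ is projective in $C^{ac}_N(\Proj{\A})$. For the reverse inclusion, the essential point is that the chainwise split sequences
$$0 \to \Sigma^{-1}X^\bullet \to \bigoplus_{n} \mu^{n+N-1}_N(X^n) \to X^\bullet \to 0, \qquad 0 \to X^\bullet \to \bigoplus_{n} \mu^n_N(X^n) \to \Sigma X^\bullet \to 0$$
restrict to $C^{ac}_N(\Proj{\A})$ whenever $X^\bullet$ lies there: each $\mu^i_N(Y)$ with $Y$ projective is an acyclic complex of projectives (acyclicity being a direct check), so the two middle terms are too, while $\Sigma^{\pm 1}X^\bullet$ is degreewise a direct summand of the corresponding middle term — hence a complex of projectives — and is acyclic by the long exact homology sequence of the relevant componentwise short exact sequence. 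Granting this, if $P^\bullet$ is projective in $C^{ac}_N(\Proj{\A})$ then $\bigoplus_n \mu^{n+N-1}_N(P^n) \twoheadrightarrow P^\bullet$ is an admissible epimorphism in $C^{ac}_N(\Proj{\A})$, so $\mathrm{id}_{P^\bullet}$ lifts to a section and exhibits $P^\bullet$ as a direct summand of $\bigoplus_n \mu^{n+N-1}_N(P^n)$, which is projective-injective in $C_N(\A)$; as summands of projectives are projective, $P^\bullet$ is projective in $C_N(\A)$. The injective statement is proved identically using the second sequence.

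Finally, $C^{ac}_N(\Proj{\A})$ is Frobenius exact. Since $C_N(\A)$ is Frobenius, the characterization just proved gives $\Proj{C^{ac}_N(\Proj{\A})} = \Proj{C_N(\A)} \cap C^{ac}_N(\Proj{\A}) = \Inj{C_N(\A)} \cap C^{ac}_N(\Proj{\A}) = \Inj{C^{ac}_N(\Proj{\A})}$. It has enough projectives and enough injectives because, for each $P^\bullet$, the two displayed sequences provide an admissible epimorphism $\bigoplus_n \mu^{n+N-1}_N(P^n) \twoheadrightarrow P^\bullet$ and an admissible monomorphism $P^\bullet \rightarrowtail \bigoplus_n \mu^n_N(P^n)$ in $C^{ac}_N(\Proj{\A})$, and both middle terms are projective-injective in $C_N(\A)$, hence — by the characterization — projective-injective in $C^{ac}_N(\Proj{\A})$. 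The only step requiring genuine care (the rest being bookkeeping with the inherited exact structure) is the reverse inclusion in the projective/injective characterization, and it hinges entirely on the observation that $\Sigma$ and $\Sigma^{-1}$ preserve the property of being an acyclic complex of projectives, which is exactly what allows the standard (co)resolutions of $C_N(\A)$ to be reused inside the subcategory.
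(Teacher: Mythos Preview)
Your proof is correct and follows essentially the same approach as the paper's. The paper is more terse, simply asserting that the proof of \cite[Theorem 2.1]{iyama2017derived} applies without change to $C^{ac}_N(\Proj{\A})$ to identify the projective-injectives as direct sums of $\mu^n_N(P)$ with $P\in\Proj{\A}$, whereas you explicitly unpack that argument using the $\Sigma^{\pm 1}$ sequences; but the underlying mechanism (these sequences restrict to the subcategory because $\Sigma^{\pm 1}$ preserves acyclic complexes of projectives) is identical.
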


\begin{proof}
$C^{ac}_N(\Proj{\A})$ is clearly a full, additive subcategory of $C_N(\A)$.  Given a chainwise-split short exact sequence $X^\bullet \rightarrowtail Y^\bullet \twoheadrightarrow Z^\bullet$ with $X^\bullet, Z^\bullet \in C^{ac}_N(\Proj{\A})$ and $Y^\bullet \in C_N(\A)$, it is clear that $Y^n \in \Proj{\A}$ for all $n \in \Z$.  Since $X^\bullet$ and $Z^\bullet$ are acyclic, it follows immediately from the long exact sequence in homology that $Y^\bullet$ is acyclic.  Thus $C^{ac}_N(\Proj{A})$, together with the class of all chainwise split exact sequences, is a fully exact subcategory of $C_N(\A)$.  The proof of \cite[Theorem 2.1]{iyama2017derived} applies without change to $C^{ac}_N(\Proj{\A})$, hence the projective and injective objects are direct sums of complexes of the form $\mu^n_N(P)$, where $P \in \Proj{\A}$.  The second and third statements follow immediately.
\end{proof}


\begin{proposition}
$F: C^{ac}_N(\Proj{\A}) \rightarrow \MMor_{N-2}(\A)$ preserves short exact sequences.
\end{proposition}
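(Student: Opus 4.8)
The plan is to reduce the assertion to a claim about the cycle objects $Z^0_r$. Since $\A$ is abelian, the exact structure that $\MMor_{N-2}(\A)$ inherits from $\Mor_{N-2}(\A)$ has the feature that a sequence $X_\bullet \rightarrowtail Y_\bullet \twoheadrightarrow Z_\bullet$ of objects of $\MMor_{N-2}(\A)$ is an admissible short exact sequence precisely when it is componentwise a short exact sequence in $\A$ (every short exact sequence in an abelian category is admissible; compare Propositions \ref{mmor exact, abelian case} and \ref{epi mono classification}). Thus, given a chainwise split admissible short exact sequence $0 \to P^\bullet \xrightarrow{u} Q^\bullet \xrightarrow{v} R^\bullet \to 0$ in $C^{ac}_N(\Proj{\A})$, whose images $F(P^\bullet), F(Q^\bullet), F(R^\bullet)$ lie in $\MMor_{N-2}(\A)$ by construction of $F$, it suffices to show that for every $1 \le r \le N-1$ the sequence
$$0 \longrightarrow Z^0_r(P^\bullet) \longrightarrow Z^0_r(Q^\bullet) \longrightarrow Z^0_r(R^\bullet) \longrightarrow 0$$
is exact in $\A$. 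Exactness on the left and in the middle is automatic, since $Z^0_r(-)$ is the kernel of the natural transformation induced by $d^{0,r}$ from the evaluation functor $X^\bullet \mapsto X^0$ to the evaluation functor $X^\bullet \mapsto X^r$ on $C_N(\A)$, hence is left exact.

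The substantive point is the surjectivity of $Z^0_r(Q^\bullet) \to Z^0_r(R^\bullet)$, and this is where acyclicity enters. The idea is to trade cycles for boundaries: since $Q^\bullet$ and $R^\bullet$ are acyclic and $1 \le r \le N-1$, we have $Z^0_r(Q^\bullet) = B^0_r(Q^\bullet) = im\big(d_Q^{r-N,\,N-r}\big)$ as a subobject of $Q^0$, and likewise $Z^0_r(R^\bullet) = im\big(d_R^{r-N,\,N-r}\big) \subseteq R^0$. As $v$ is a chain map which is a (split) epimorphism in each degree, the square with horizontal arrows $d_Q^{r-N,\,N-r}$, $d_R^{r-N,\,N-r}$ and vertical arrows $v^{r-N}$, $v^{0}$ commutes, and using that $v^{r-N}$ is epic one computes
$$v^{0}\big(Z^0_r(Q^\bullet)\big) = im\big(v^{0}\circ d_Q^{r-N,\,N-r}\big) = im\big(d_R^{r-N,\,N-r}\circ v^{r-N}\big) = im\big(d_R^{r-N,\,N-r}\big) = Z^0_r(R^\bullet).$$
Since $v^0$ carries $Z^0_r(Q^\bullet)$ into $Z^0_r(R^\bullet)$ (again because $v$ is a chain map), the restriction $Z^0_r(Q^\bullet) \to Z^0_r(R^\bullet)$ is an epimorphism, which finishes the verification.

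I do not expect a real obstacle here: the only idea needed is the passage from cycle objects to boundary objects via acyclicity, and the work of making $F$ compatible with the exact structures was already absorbed into the construction of $\MMor_{N-2}(\A)$ and the analysis of $N$-acyclic arrays in the previous section. The sole point that wants a little care is phrasing the image computations above through subobjects and factorizations valid in an arbitrary abelian $\A$, rather than by chasing actual elements. (One could instead apply the Snake Lemma to the morphism $d^{0,r}$ of degreewise short exact sequences, but identifying the resulting cokernels reduces to the same use of acyclicity and yields no genuine simplification.)
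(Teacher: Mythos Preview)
Your argument is correct and is a genuinely different, more economical route than the paper's. The paper applies the Snake Lemma to the morphism of degreewise short exact sequences induced by $d^{0,j}: (-)^0 \Rightarrow (-)^j$, obtaining an exact sequence
\[
0 \to Z^{0}_j(P^\bullet) \to Z^{0}_j(Q^\bullet) \to Z^{0}_j(R^\bullet) \xrightarrow{\phi} \operatorname{coker}(d_P^{0,j}),
\]
and then shows $\phi = 0$ by unpacking the connecting morphism in matrix form using the chainwise splitting, invoking acyclicity of $P^\bullet$ at the end to identify $Z^{j}_{N-j}(P^\bullet)$ with $\operatorname{im}(d_P^{0,j})$. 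Your approach instead invokes acyclicity of $Q^\bullet$ and $R^\bullet$ to rewrite $Z^0_r$ as $B^0_r$, and then surjectivity is immediate from the degreewise epimorphism $v^{r-N}$ and the chain-map identity $v^0 d_Q^{r-N,N-r} = d_R^{r-N,N-r} v^{r-N}$. This bypasses the explicit matrix computation entirely and in fact uses only that $v$ is a degreewise epimorphism, not the full chainwise split structure. Both proofs ultimately rest on the same mechanism (acyclicity collapses cycles to boundaries), but you apply it to the middle and right terms rather than the left, which is what makes the argument shorter.
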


\begin{proof}
Consider a chainwise split exact sequence $\begin{tikzcd} P^\bullet \arrow[r, tail, "f^\bullet"] & Q^\bullet \arrow[r, two heads, "g^\bullet"] & R^\bullet \end{tikzcd}$ in $C^{ac}_N(\Proj{\A})$.  Applying the Snake Lemma to
\begin{eqnarray*}
\begin{tikzcd}
0 \arrow[r] & P^{0} \arrow[d, "d_P^{0, j}"] \arrow[r, hook, "f^0"] & Q^{0} \arrow[d, "d_Q^{0,j}"] \arrow[r, two heads, "g^0"] & R^{0} \arrow[d, "d_R^{0,j}"] \arrow[r] & 0\\
0 \arrow[r] & P^{j} \arrow[r, hook, "f^j"] & Q^{j} \arrow[r, two heads, "g^j"] & R^{j} \arrow[r] & 0
\end{tikzcd}
\end{eqnarray*}
we obtain an exact sequence
$$0 \rightarrow Z^{0}_j(P^\bullet) \hookrightarrow Z^{0}_j(Q^\bullet) \rightarrow Z^{0}_j(R^\bullet) \xrightarrow{\phi} coker(d_P^{0,j})$$
It remains to show that the connecting morphism $\phi$ is zero.

We briefly recall the construction of $\phi$.  Let $X$ be the pullback
\begin{eqnarray*}
\begin{tikzcd}
X \arrow[d, hook, "\iota"] \arrow[r, two heads, "p"] & Z^{0}_j(R^\bullet) \arrow[d, hook]\\
Q^{0} \arrow[r, two heads, "g^0"] & R^{0}
\end{tikzcd}
\end{eqnarray*}
From this diagram we see that $g^j\circ d_Q^{0,j} \iota = 0$, hence $d_Q^{0,j} \iota$ factors through $ker(g^j) = f^j$.  Write $d_Q^{0,j} \iota$ as $X \xrightarrow{\alpha} P^{j} \xhookrightarrow{f^j} Q^{j}$ for a unique map $\alpha$.  Then $\phi$ is given by the induced map on cokernels
\begin{eqnarray*}
\begin{tikzcd}
0 \arrow[r] & ker(p) \arrow[d, "\sim"] \arrow[r, hook] & X \arrow[d, "\alpha"] \arrow[r, two heads, "p"] & Z^{0}_j(R^\bullet) \arrow[d, dashed, "\phi"] \arrow[r] & 0\\
 & P^{0} \arrow[r, "d_P^{0,j}"] & P^{j} \arrow[r, two heads] & coker(d_P^{0,j}) \arrow[r] & 0
\end{tikzcd}
\end{eqnarray*}
Thus for $\phi$ to be zero, we must show that $\alpha$ factors through $im(d_P^{0,j})$.

Since $P^\bullet \rightarrowtail Q^\bullet \twoheadrightarrow R^\bullet$ is chainwise split exact, for each $n$ we can write $Q^n \cong P^n \oplus R^n$, with $f^n$ and $g^n$ becoming the canonical inclusion and projection maps, respectively.  Using this decomposition, we can express
\begin{eqnarray*}
\iota &=& \begin{bmatrix}\iota_1 \\ \iota_2 \end{bmatrix}\\
d_Q^{0,j} &=& \begin{bmatrix} d_P^{0,j} & \beta\\ 0 & d_R^{0,j} \end{bmatrix}\\
d_Q^{j, N-j} &=& \begin{bmatrix} d_P^{j,N-j} & \gamma\\ 0 & d_R^{j, N-j} \end{bmatrix}
\end{eqnarray*}
Note that $d^{0,j}_R\iota_2 = d^{0,j}_R g^0 \iota = d^{0,j}_R p = 0$.  It follows that
\begin{eqnarray*}
d_Q^{0,j} \iota = \begin{bmatrix} d_P^{0,j} & \beta\\ 0 & d_R^{0,j} \end{bmatrix} \begin{bmatrix}\iota_1 \\ \iota_2 \end{bmatrix} = \begin{bmatrix} d_P^{0,j} \iota_1 + \beta \iota_2 \\ 0 \end{bmatrix}\\
\end{eqnarray*}
hence $\alpha = d_P^{0,j} \iota_1 + \beta \iota_2$.  Furthermore,
\begin{eqnarray*}
0 = d_Q^{j, N-j} \circ d_Q^{0,j}\iota = \begin{bmatrix} d_P^{j,N-j} & \gamma\\ 0 & d_R^{j, N-j} \end{bmatrix} \begin{bmatrix} d_P^{0,j} \iota_1 + \beta \iota_2 \\ 0 \end{bmatrix} = \begin{bmatrix} d_P^{j, N-j}\beta \iota_2 \\ 0 \end{bmatrix}
\end{eqnarray*}
We have that $\beta \iota_2$ factors through $Z^{j}_{N-j}(P^\bullet) = im(d_P^{0,j})$, hence so does $\alpha = d_P^{0,j} \iota_1 + \beta \iota_2$.  Thus $\phi = 0$ and so $0 \rightarrow Z^{0}_j(P^\bullet) \rightarrow Z^{0}_j(Q^\bullet) \rightarrow Z^{0}_j(R^\bullet) \rightarrow 0$ is exact for each $j$.
\end{proof}


\begin{corollary}
$F$ descends to a functor $\overline{F}: K_N^{ac}(\Proj{\mathcal{A}}) \rightarrow \stab{N}{\mathcal{A}}$ of triangulated categories.
\end{corollary}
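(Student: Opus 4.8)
The plan is to invoke the standard principle (see Happel or Keller) that an additive functor between Frobenius exact categories which carries admissible short exact sequences to admissible short exact sequences and projective-injective objects to projective-injective objects descends to a triangulated functor on the stable categories. We have already verified that $F$ preserves short exact sequences, and $F$ is plainly additive, so the only point needing work is that $F$ sends the projective-injective objects of $C^{ac}_N(\Proj{\A})$ to projective-injective objects of $\MMor_{N-2}(\A)$.

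Since $F$ is additive and commutes with direct sums (kernels commute with coproducts in an abelian category), and every projective-injective object of $C^{ac}_N(\Proj{\A})$ is a direct sum of complexes $\mu^n_N(P)$ with $P \in \Proj{\A}$, it suffices to compute $F(\mu^n_N(P))$. The differentials of $\mu^n_N(P)$ are identity maps between the $N$ consecutive copies of $P$ and zero maps elsewhere, so for $1 \le j \le N-1$ the cycle group $Z^0_j(\mu^n_N(P)) = \ker(d^{0,j})$ is $0$ as long as $d^0,\dots,d^{j-1}$ are all identities, is all of $P$ once one of them is zero, and is $0$ entirely when the object in degree $0$ vanishes. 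Carrying this out, one finds $F(\mu^n_N(P)) \cong \chi_{n+1}(P)_\bullet$ for $0 \le n \le N-2$ and $F(\mu^n_N(P)) = 0$ otherwise. Because $\A$ is Frobenius, $P$ is projective-injective, and the structure maps of $\chi_{n+1}(P)_\bullet$ are identities, hence split; so Proposition \ref{inj/proj characterization} shows each $\chi_{n+1}(P)_\bullet$ (and the zero object) is projective-injective in $\MMor_{N-2}(\A)$, and hence so is any direct sum of such. Thus $F$ carries projective-injectives to projective-injectives.

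It follows that $F$ sends the ideal of null-homotopic morphisms (those factoring through a projective-injective) into the corresponding ideal in $\MMor_{N-2}(\A)$, so $F$ induces a functor $\overline{F}: K^{ac}_N(\Proj{\A}) \to \stab{N}{\A}$. This functor is triangulated: applying $F$ to the functorial chainwise-split exact sequence $X^\bullet \rightarrowtail \bigoplus_{n} \mu^n_N(X^n) \twoheadrightarrow \Sigma X^\bullet$ defining the suspension, whose middle term is projective-injective, yields an admissible short exact sequence in $\MMor_{N-2}(\A)$ with projective-injective middle term, and uniqueness of the suspension in a Frobenius stable category then produces a natural isomorphism $\overline{F}\,\Sigma \xrightarrow{\sim} \Omega^{-1}\overline{F}$; moreover, since $F$ preserves short exact sequences it carries the distinguished triangles of $K^{ac}_N(\Proj{\A})$ (which arise from admissible short exact sequences) to distinguished triangles of $\stab{N}{\A}$. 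The only real content of the argument is the explicit identification $F(\mu^n_N(P)) \cong \chi_{n+1}(P)_\bullet$, with the index matched correctly; once this is in hand everything else is formal, so I anticipate no genuine obstacle beyond this bookkeeping.
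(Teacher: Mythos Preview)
Your proof is correct and follows essentially the same approach as the paper's: verify that $F(\mu^n_N(P))$ is projective-injective in $\MMor_{N-2}(\A)$ (you do this via the explicit identification with $\chi_{n+1}(P)_\bullet$, which the paper leaves implicit), then invoke the standard fact that an exact functor between Frobenius categories preserving projective-injectives descends to a triangulated functor on stable categories. Your extra detail on the suspension isomorphism is also fine.

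One small caveat: the parenthetical ``kernels commute with coproducts in an abelian category'' is false in general, and the direct sums defining projective-injectives in $C^{ac}_N(\Proj{\A})$ can be infinite. In the case at hand it causes no trouble, because each component differential $d^{0,j}$ of $\mu^{n_\alpha}_N(P_\alpha)$ is either zero or an identity, so the total differential is block-diagonal with each block zero or invertible; the kernel of such a map is visibly the direct summand corresponding to the zero blocks, hence is a summand of $I^0 \in \Proj{\A}$ and the inclusions $Z^0_j \hookrightarrow Z^0_{j+1}$ are split. This is the honest reason $F(I^\bullet)$ is projective-injective, not a general commutation of kernels with coproducts. The paper does not spell this out either, so your argument is no less complete than theirs.
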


\begin{proof}
By Proposition \ref{inj/proj characterization}, for any $i \in \Z$, $F(\mu^i_N(P))$ is projective-injective in $\MMor_{N-2}(\A)$.  Thus $F$ preserves projective-injective objects and so descends to a functor $\overline{F}$ between the stable categories.  Since $F$ preserves exact sequences and projective-injective objects, it follows immediately that $\overline{F}$ preserves distinguished triangles and the suspension functor, hence is a functor of triangulated categories.
\end{proof}


\subsection{Properties of $\overline{F}$}

In this section, we shall prove that $\overline{F}$ is an equivalence of categories.  Most of our work will be to show that $\overline{F}$ is faithful.  The following terminology will be convenient for the proof.

\begin{definition}
Let $f^\bullet: P^\bullet \rightarrow Q^\bullet$ be a morphism in $K_N^{ac}(\Proj{\A})$.  Given a family of morphisms $h^i:  P^i \rightarrow Q^{i-N+1}$, we define the sum
$$S_h(n, j, k) := \sum_{i=n+j}^{n+k-1} d_Q^{\circ, n-i+N-1} h^i d_P^{n, i-n}: P^n \rightarrow Q^n$$
whenever the $h^i$ appearing in the formula are defined.  To understand this expression, note that $f^\bullet$ is null-homotopic if and only if $h^i$ is defined for all $i \in \Z$ and $f^n= S_h(n, 0, N)$ for each $n \in \Z$.  Increasing the second parameter removes terms from the start of the sum, and decreasing the third parameter removes terms from the end of the sum.

We define a \textbf{homotopy (of $f^\bullet$) at $n$} to be a sequence of $N$ maps $(h^n, h^{n+1}, \cdots, h^{n+N-1})$ such that $f^n = S_h(n, 0, N)$.  We define a \textbf{seed (of $f^\bullet$) at $n$} to be a sequence of $N-1$ maps $(h^n, h^{n+1}, \cdots, h^{n+N-2})$ such that $f^n|_{Z^n_{N-1}(P^\bullet)} = S_h(n, 0, N-1)|_{Z^n_{N-1}(P^\bullet)}$.
\end{definition}


The following lemma is trivial when $N=2$.

\begin{lemma}
\label{seed lemma 1}
Let $f^\bullet: P^\bullet \rightarrow Q^\bullet$ be a morphism in $K_N^{ac}(\Proj{\A})$. If $\overline{F}(f) = 0$, then there exists a seed of $f^\bullet$ at $0$.
\end{lemma}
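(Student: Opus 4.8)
The plan is to translate the hypothesis $\overline{F}(f)=0$ into an explicit factorization in $\MMor_{N-2}(\A)$ and then \emph{unwind} that factorization into a partial homotopy. Since $\overline{F}(f^\bullet)=0$ means $F(f^\bullet)$ factors through a projective-injective object of $\MMor_{N-2}(\A)$, and by Proposition \ref{inj/proj characterization} such an object is a direct sum of $\chi_i(I)_\bullet$ for injective $I$, we may write $F(f^\bullet) = q_\bullet \circ p_\bullet$ with $p_\bullet: F(P^\bullet) \to J_\bullet$ and $q_\bullet: J_\bullet \to F(Q^\bullet)$, where $J_\bullet$ is such a direct sum of $\chi_i(I_i)_\bullet$'s. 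Concretely, this says the map $f_{N-1}: Z^0_{N-1}(P^\bullet) \to Z^0_{N-1}(Q^\bullet)$ (and compatibly each $f_j$) factors through injective objects, with the ladder structure recording how the factorizations at different cycle-levels $j$ are related.

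The key step is to produce, from this data, maps $h^0, h^1, \dots, h^{N-2}$ with $h^i: P^i \to Q^{i-N+1}$ satisfying $f^0|_{Z^0_{N-1}(P^\bullet)} = S_h(0,0,N-1)|_{Z^0_{N-1}(P^\bullet)}$. I would proceed by induction on the ``width'' of the factorization: for each summand $\chi_i(I)_\bullet$, the composite $Z^0_{N-1}(P^\bullet) \to I \to Z^0_{N-1}(Q^\bullet)$, together with the injectivity of the relevant $Q^j$'s and the fact that $P^\bullet, Q^\bullet$ are complexes of projective-injectives, should let me lift the factorizing map $I \to Q^0$-ish data backwards along the differentials. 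More precisely, a map factoring through $\chi_i(I)_\bullet$ should, after lifting $I$ against the admissible epimorphism $P^{i-N+1} \twoheadrightarrow B^{i-N+2}_{N-1}(P^\bullet) = \cdots$ given by a differential, contribute exactly one summand $d_Q^{\circ,\,N-1-i}\, h^i\, d_P^{0,i}$ to the telescoping sum $S_h(0,0,N-1)$; summing over the $N-1$ summands $\chi_1, \dots, \chi_{N-1}$ of $J_\bullet$ gives all $N-1$ terms. The bookkeeping in Proposition \ref{inj/proj characterization} — that $J_\bullet \cong \bigoplus_i \chi_i(I_i')_\bullet$ with $I_i'$ the cokernels of the splittings — is what guarantees the indices line up so that exactly the seed formula (and not the full homotopy formula) emerges, since $\chi_i$ only ``sees'' cycle-levels $\ge i$.

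The main obstacle I anticipate is the careful alignment of indices and the verification that the maps produced are genuinely defined as maps $P^i \to Q^{i-N+1}$ rather than merely on subobjects: at each inductive stage I will have a map defined on some $Z^n_r(P^\bullet)$ or on an image $B^n_r$, and I must lift it — using projectivity of the $P^i$ and injectivity of the $Q^i$ alternately, exactly as in the proof of Proposition \ref{fullness on complexes} — to an honest morphism of the ambient projective-injective objects, while checking that the lifts assemble into the telescoping expression $S_h(0,0,N-1)$ after restriction to $Z^0_{N-1}(P^\bullet)$. A secondary subtlety is that $\overline{F}(f)=0$ only gives the factorization up to the \emph{stable} equivalence, so I should first replace $f^\bullet$ by an honest representative and absorb the ambiguity (a null-homotopic summand) into the seed, which is harmless since a genuine homotopy certainly restricts to a seed. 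I expect the argument to be ``trivial for $N=2$'' precisely because then the seed is empty and there is nothing to construct.
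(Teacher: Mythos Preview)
Your approach is essentially the paper's: factor $F(f^\bullet)$ through a projective-injective $J_\bullet \cong \bigoplus_i \chi_i(I_i)_\bullet$, and for each summand $\chi_i$ use one injectivity lift (to extend the component $a_i: Z^0_{N-1}(P^\bullet)\to I_i$, which vanishes on $Z^0_{i-1}$, along the embedding $Z^0_{N-1}/Z^0_{i-1}\hookrightarrow P^{i-1}$ induced by $d_P^{0,i-1}$) and one projectivity lift (to lift the component $b_i:I_i\to Z^0_{N-1}(Q^\bullet)$, which lands in $Z^0_i=B^0_i$, against $d_Q^{i-N,N-i}$), then set $h^{i-1}=\beta^{i-1}\alpha^{i-1}$.

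Two minor points where your sketch overcomplicates things. First, no induction on ``width'' is needed: the commutativity of the ladder through $J_\bullet$ immediately gives that $a_i$ annihilates $Z^0_{i-1}$ and $b_i$ factors through $Z^0_i$, so each $h^{i-1}$ is produced in one step, independently of the others. Second, your ``secondary subtlety'' is not a subtlety: $\overline{F}(f)=0$ \emph{means precisely} that the morphism $F(f^\bullet)$ in $\MMor_{N-2}(\A)$ factors through a projective-injective; there is no residual ambiguity to absorb. (Your index $P^{i-N+1}$ is also off; the lift lives on $P^{i-1}$.)
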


\begin{proof}
Since $\overline{F}(f) = 0$, we have a diagram
\begin{eqnarray*}
\begin{tikzcd}
Z^{0}_1(P^\bullet) \arrow[r, hook] \arrow[d] & Z^{0}_2(P^\bullet) \arrow[r, hook] \arrow[d] & \cdots \arrow[r, hook] & Z^{0}_{N-1}(P^\bullet) \arrow[d]\\
I_1 \arrow[r, hook] \arrow[d] & I_1 \oplus I_2 \arrow[r, hook] \arrow[d] & \cdots \arrow[r, hook] & \bigoplus_{j=1}^{N-1}I_{j} \arrow[d]\\
Z^{0}_1(Q^\bullet) \arrow[r, hook] & Z^{0}_2(Q^\bullet) \arrow[r, hook] & \cdots \arrow[r, hook] & Z^{0}_{N-1}(Q^\bullet)
\end{tikzcd}
\end{eqnarray*}
where the horizontal maps are canonical inclusions, the $I_j$ are projective-injective, and the $j$th pair of vertical maps composes to $f^{0}|_{Z^{0}_j(P^\bullet)}$.  For $1 \le j \le N-1$, let $a_j:  Z^{0}_{N-1}(P^\bullet) \rightarrow I_j$ and $b_j: I_j \rightarrow Z^{0}_{N-1}(Q^\bullet)$ denote the components of the rightmost vertical maps, so that we have $f^{0}|_{Z^{0}_{N-1}(P^\bullet)} = \sum_{j=1}^{N-1} b_ja_j$.

For each $1 \le i \le N-1$, by commutativity of the top rows we have that $a_{i}$ factors through $Z^{0}_{N-1}(P^\bullet)/Z^{0}_{i-1}(P^\bullet)$. (For the degenerate case $i=1$ we let $Z^0_0(P^\bullet) = 0$.)  By injectivity of $I_{i}$, we obtain a commutative diagram 
\begin{eqnarray*}
\begin{tikzcd}
Z^{0}_{N-1}(P^\bullet) \arrow[dr, "a_{i}", swap] \arrow[r, two heads] & Z^{0}_{N-1}(P^\bullet)/Z^{0}_{i-1}(P^\bullet) \arrow[d, "\overline{a_{i}}"] \arrow[r, hook, "\overline{d_P^{0, i-1}}"] & P^{i-1} \arrow[dl, dashed, "\alpha^{i-1}"] \\
& I_{i} & 
\end{tikzcd}
\end{eqnarray*}
Thus $a_{i} = \alpha^{i-1} d_P^{0, i-1}|_{Z^0_{N-1}(P^\bullet)}$ for $1 \le i \le N-1$.

Dually, by commutativity of the bottom rows, $b_{i}$ factors through $Z^{0}_{i}(Q^\bullet)$, which by acyclicity of $Q^\bullet$ is equal to $B^{0}_i(Q^\bullet)$.  By projectivity of $I_{i}$, we obtain a map $\beta^{i-1}: I_{i} \rightarrow Q^{i-N}$ such that $b_{i} = d_Q^{i-N, -i+N} \beta^{i-1}$.

Define $h^i = \beta^i \alpha^i: P^i \rightarrow Q^{i-N+1}$ for $1 \le i \le N-1$.  Then we have
\begin{eqnarray*}
f^{0}|_{Z^{0}_{N-1}(P^\bullet)} &=& \sum_{i=0}^{N-2} b_{i+1}a_{i+1} = \sum_{i=0}^{N-2} d_Q^{\circ, -i+N-1} h^{i} d_P^{0, i}|_{Z^0_{N-1}(P^\bullet)}\\
 &=& S_h(0, 0, N-1)|_{Z^{0}_{N-1}(P^\bullet)}
\end{eqnarray*}
Thus $(h^0, \cdots, h^{N-2})$ is a seed of $f^\bullet$ at $0$.
\end{proof}


If $(h^n, \cdots, h^{n+N-1})$ is a homotopy of $f^\bullet: P^\bullet \rightarrow Q^\bullet$ at $n$, it is clear that the shortened tuple $(h^n, \cdots, h^{n+N-2})$ is a seed at $n$, since the last term of $f^n = S_h(n, 0, N)$ vanishes on $Z^{n}_{N-1}(P^\bullet)$.  The next lemma establishes a converse.

\begin{lemma}
\label{seed lemma 2}
Let $f^\bullet: P^\bullet \rightarrow Q^\bullet$ be a morphism in $K_N^{ac}(\Proj{A})$.  Suppose there exists a seed $(h^n, \cdots, h^{n+N-2})$ of $f^\bullet$ at $n$.  Then there exists $h^{n+N-1}$ such that:\\
$\bullet$  $(h^n, \cdots, h^{n+N-1})$ is a homotopy at $n$.\\
$\bullet$  $(h^{n+1}, \cdots, h^{n+N-1})$ is a seed at $n+1$.

There also exists $h^{n-1}$ such that:\\
$\bullet$  $(h^{n-1}, h^n, \cdots, h^{n+N-2})$ is a homotopy at $n-1$.\\
$\bullet$  $(h^{n-1}, h^n, \cdots, h^{n+N-3})$ is a seed at $n-1$.
\end{lemma}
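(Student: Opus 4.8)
The plan is to extend the seed to a homotopy by finding a single map $h^{n+N-1}$ that accounts for the discrepancy between $f^n$ and $S_h(n,0,N-1)$ on all of $P^n$, not merely on cycles. First I would set $g^n := f^n - S_h(n,0,N-1): P^n \to Q^n$. The defining property of a seed says precisely that $g^n$ vanishes on $Z^n_{N-1}(P^\bullet)$. Therefore $g^n$ factors through $P^n/Z^n_{N-1}(P^\bullet)$, and since $d_P^{n, N-1}: P^n \to P^{n+N-1}$ has kernel exactly $Z^n_{N-1}(P^\bullet)$ and image $B^{n+N-1}_1(P^\bullet) = Z^{n+N-1}_1(P^\bullet)$ (using acyclicity of $P^\bullet$), the map $g^n$ factors as $g^n = \bar{g}^n d_P^{n,N-1}$ for a map $\bar{g}^n: Z^{n+N-1}_1(P^\bullet) \to Q^n$. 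Using injectivity of $Q^n$ and the admissible monomorphism $Z^{n+N-1}_1(P^\bullet) \hookrightarrow P^{n+N-1}$, I would lift $\bar{g}^n$ to $h^{n+N-1}: P^{n+N-1} \to Q^n = Q^{(n+N-1)-N+1}$.

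The next step is to verify the two bullet points. For the homotopy claim: by construction the single added term $d_Q^{\circ, n-(n+N-1)+N-1} h^{n+N-1} d_P^{n, N-1} = d_Q^{\circ,0} h^{n+N-1} d_P^{n,N-1} = h^{n+N-1} d_P^{n,N-1}$ equals $g^n$ on $P^n$ (since $h^{n+N-1}$ restricted to the image of $d_P^{n,N-1}$ is $\bar g^n$), so $S_h(n,0,N) = S_h(n,0,N-1) + g^n = f^n$, giving a homotopy at $n$. For the seed claim at $n+1$: I must check that $S_h(n+1, 0, N-1)$ agrees with $f^{n+1}$ on $Z^{n+1}_{N-1}(P^\bullet)$, where now the sum runs over $h^{n+1}, \dots, h^{n+N-1}$. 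The key identity is the standard ``telescoping'' relation comparing $S_h(n,0,N)$ precomposed with a differential to $S_h(n+1,\cdot,\cdot)$: one uses that $f^\bullet$ is a chain map, so $f^{n+1} d_P^n = d_Q^n f^n$, and expands both sides using the homotopy at $n$. Restricting to $Z^{n+1}_{N-1}(P^\bullet)$ kills the appropriate boundary terms and yields exactly the seed condition at $n+1$. The backward statement (constructing $h^{n-1}$) is entirely dual: one works with $P^{n-1}$, uses projectivity of $P^{n-1}$ to lift against the admissible epimorphism onto $B^{n-1}_{N-1}(P^\bullet)$, and the same chain-map identity $d_Q^{n-1}f^{n-1} = f^n d_P^{n-1}$ propagates the seed condition leftward.

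I expect the main obstacle to be the bookkeeping in verifying the seed-propagation identities in the second step — keeping careful track of which terms in $S_h(n,0,N)$ survive or vanish when restricted to $Z^{n+1}_{N-1}(P^\bullet)$ (resp.\ $Z^{n-1}_{N-1}(P^\bullet)$), and correctly invoking the chain-map condition for $f^\bullet$ together with the $N$-fold vanishing of successive differentials. The factorization arguments in the first step are routine given acyclicity and the projective-injective hypotheses on the terms of $P^\bullet$ and $Q^\bullet$; the subtlety is purely in the index arithmetic of the homotopy sums, which is why the lemma is trivial for $N=2$ (there the seed is empty and the single map $h^{n+1}$ is just the usual homotopy contraction).
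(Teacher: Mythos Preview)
Your proposal is correct and follows essentially the same argument as the paper: define $\psi = f^n - S_h(n,0,N-1)$, factor through $P^n/Z^n_{N-1}(P^\bullet)$, extend along $d_P^{n,N-1}$ using injectivity of $Q^n$, then propagate the seed to $n+1$ via the chain-map identity $f^{n+1}d_P^n = d_Q^n f^n = d_Q^n S_h(n,0,N) = S_h(n+1,0,N-1)d_P^n$ and the fact that $d_P^n$ surjects onto $Z^{n+1}_{N-1}(P^\bullet)$. One small slip in your sketch of the backward step: the relevant object is $Z^{n-1}_1(Q^\bullet)$ (equivalently $B^{n-1}_1(Q^\bullet)$ by acyclicity of $Q^\bullet$), not $B^{n-1}_{N-1}(P^\bullet)$ --- you show $d_Q^{n-1}\varphi = 0$ for $\varphi = f^{n-1} - S_h(n-1,1,N)$, so $\varphi$ lands in $Z^{n-1}_1(Q^\bullet)$, and then you lift along the epimorphism $d_Q^{\circ,N-1}\colon Q^{n-N} \twoheadrightarrow Z^{n-1}_1(Q^\bullet)$ using projectivity of $P^{n-1}$.
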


\begin{proof}
Let $\psi = f^n - S_h(n, 0, N-1)$.  Since $(h^n, \cdots, h^{n+N-2})$ is a seed at $n$, we have $\psi \mid_{Z^n_{N-1}(P^\bullet)}=0$, hence $\psi$ factors through $P^n/Z^{n}_{N-1}(P^\bullet)$.  By injectivity of $Q^n$, we obtain
\begin{eqnarray*}
\begin{tikzcd}
P^n \arrow[dr, "\psi", swap] \arrow[r, two heads] & P^n/Z^n_{N-1}(P^\bullet) \arrow[d, "\overline{\psi}"] \arrow[r, hook, "\overline{d_P^{n, N-1}}"] & P^{n+N-1} \arrow[dl, dashed, "h^{n+N-1}"] \\
& Q^n & 
\end{tikzcd}
\end{eqnarray*}
Thus
\begin{eqnarray*}
f^n &=& S_h(n, 0, N-1) + \psi = S_h(n, 0, N-1) + h^{n+N-1}d_P^{n, N-1}\\
 &=& S_h(n, 0, N)
\end{eqnarray*}
so $(h^n, \cdots, h^{n+N-1})$ is a homotopy at $n$.

To see that $(h^{n+1}, \cdots, h^{n+N-1})$ is a seed at $n+1$, note that
$$f^{n+1}d_P^n = d_Q^n f^n = d_Q^n S_h(n, 0, N) = S_h(n+1, 0, N-1)d_P^n$$
Since $d_P^n:  P^n \twoheadrightarrow Z^{n+1}_{N-1}(P^\bullet)$ is an epimorphism, we can cancel it on the right to obtain $f^{n+1}|_{Z^{n+1}_{N-1}(P^\bullet)} = S_h(n+1, 0, N-1)|_{Z^{n+1}_{N-1}(P^\bullet)}$, as desired.  

To construct $h^{n-1}$, let $\varphi = f^{n-1} - S_h(n-1, 1, N)$.  Note that
\begin{eqnarray*}
d_Q^{n-1} \varphi &=& d_Q^{n-1}f^{n-1} - d_Q^{n-1} S_h(n-1, 1, N)\\
 &=& (f^n - S_h(n, 0, N-1))d_P^{n-1} = 0
\end{eqnarray*}
where the last equality holds because $(h^n, \cdots, h^{n+N-1})$ is a seed at $n$.  Thus $\varphi$ factors through $Z^{n-1}_1(Q^\bullet)$, and by projectivity of $P^{n-1}$ we obtain
\begin{eqnarray*}
\begin{tikzcd}
& P^{n-1} \arrow[dl, dashed, "h^{n-1}", swap] \arrow[d, "\varphi"] &\\
Q^{n-N} \arrow[r, two heads, "d_Q^{\circ, N-1}", swap] & Z^{n-1}_1(Q^\bullet) \arrow[r, hook] & Q^{n-1}
\end{tikzcd}
\end{eqnarray*}
Thus
\begin{eqnarray*}
f^{n-1} &=& \varphi + S_h(n-1, 1, N) = d_Q^{\circ, N-1}h^{n-1} + S_h(n-1, 1, N)\\
 &=& S_h(n-1, 0, N)
\end{eqnarray*}
hence $(h^{n-1}, \cdots, h^{n+N-2})$ is a homotopy at $n-1$.  It follows immediately that $(h^{n-1}, \cdots, h^{n+N-3})$ is a seed at $n-1$.
\end{proof}


We are now ready to prove the main theorem of this section.

\begin{theorem}
\label{acyclic-stable equivalence}
$\overline{F}: K_N^{ac}(\Proj{\A}) \rightarrow \stab{N}{\A}$ is an equivalence.
\end{theorem}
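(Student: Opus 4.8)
The plan is to show that $\overline{F}$ is essentially surjective, full, and faithful; the first two properties are inherited almost directly from $F$, and Lemmas \ref{seed lemma 1} and \ref{seed lemma 2} have been set up precisely to handle faithfulness, which carries all of the difficulty.

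For essential surjectivity, I would simply observe that $\overline{F}$ acts on objects exactly as $F$ does and that $\stab{N}{\A}$ has the same objects as $\MMor_{N-2}(\A)$, so Proposition \ref{essential surjectivity on complexes} already gives that every object of $\stab{N}{\A}$ is isomorphic to $\overline{F}(P^\bullet)$ for some $P^\bullet$. For fullness, I would use that $\stab{N}{\A}$ is by definition the projectively stable category of $\MMor_{N-2}(\A)$, so the quotient functor $\MMor_{N-2}(\A) \to \stab{N}{\A}$ is full; given a morphism $\overline{F}(P^\bullet) \to \overline{F}(Q^\bullet)$ in $\stab{N}{\A}$, one lifts it to a morphism $F(P^\bullet) \to F(Q^\bullet)$ in $\MMor_{N-2}(\A)$ and then, by Proposition \ref{fullness on complexes}, to a morphism $P^\bullet \to Q^\bullet$ in $C^{ac}_N(\Proj{\A})$ whose image under $\overline{F}$ is the given one.

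The substantive part is faithfulness, for which it suffices to show that a morphism $f^\bullet : P^\bullet \to Q^\bullet$ in $K^{ac}_N(\Proj{\A})$ with $\overline{F}(f) = 0$ is null-homotopic. Lemma \ref{seed lemma 1} converts the hypothesis $\overline{F}(f) = 0$ into a seed $(h^0, \dots, h^{N-2})$ of $f^\bullet$ at $0$. I would then propagate this seed in both directions using Lemma \ref{seed lemma 2}. Applied repeatedly ``upward'' — the output seed at $n+1$ of one step being the input of the next — it produces maps $h^{N-1}, h^N, \dots$ making $(h^n, \dots, h^{n+N-1})$ a homotopy of $f^\bullet$ at $n$ for every $n \ge 0$; applied repeatedly ``downward'' to the same seed at $0$, it produces maps $h^{-1}, h^{-2}, \dots$ making $(h^n, \dots, h^{n+N-1})$ a homotopy at $n$ for every $n < 0$. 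These two processes construct maps with disjoint index ranges (upward only $h^i$ with $i \ge N-1$, downward only $h^i$ with $i \le -1$, the seed supplying $h^0, \dots, h^{N-2}$), so the family $\{h^i\}_{i \in \Z}$ is well-defined and satisfies $f^n = S_h(n, 0, N)$ for all $n \in \Z$, which is exactly the statement that $f^\bullet$ is null-homotopic. Hence $f = 0$ in $K^{ac}_N(\Proj{\A})$ and $\overline{F}$ is faithful.

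The hard part is faithfulness, and almost all of that has already been absorbed into the two seed lemmas; the remaining work in the theorem is purely bookkeeping — running the propagation of Lemma \ref{seed lemma 2} in both directions and checking that the two extensions of the seed at $0$ are mutually compatible and together cover every degree $n$. The one point I would take care to verify explicitly is precisely this compatibility, namely that no $h^i$ is ever assigned two conflicting values and that one obtains homotopies at all integers rather than only at the non-negative or non-positive ones in isolation.
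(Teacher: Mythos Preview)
Your proposal is correct and matches the paper's own proof essentially line for line: faithfulness via Lemmas \ref{seed lemma 1} and \ref{seed lemma 2} propagated in both directions from the seed at $0$, and fullness/essential surjectivity inherited from $F$ through the quotient functor. Your explicit check that the upward, downward, and seed index ranges are disjoint is slightly more detailed than the paper's terse ``inductively define,'' but the argument is the same.
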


\begin{proof}
Let $f^\bullet: P^\bullet \rightarrow Q^\bullet$ be a morphism in $K_N^{ac}(\Proj{\A})$ such that $\overline{F}(f) = 0$.
By Lemmas \ref{seed lemma 1} and \ref{seed lemma 2}, we can inductively define maps $h^i: P^i \rightarrow Q^{i-N+1}$ for all $i \in \Z$ such that $(h^n, \cdots, h^{n+N-1})$ is a homotopy at $n$ for every $n \in \Z$.  Thus $f$ is null-homotopic, and so $\overline{F}$ is faithful.

$\overline{F}$ is defined via a commutative diagram of functors
\begin{eqnarray*}
\begin{tikzcd}
C_N^{ac}(\Proj{\A}) \arrow[r, "F"] \arrow[d] & \MMor_{N-2}(\A) \arrow[d]\\
K_N^{ac}(\Proj{\A}) \arrow[r, "\overline{F}"] & \stab{N}{\A}
\end{tikzcd}
\end{eqnarray*}
By Propositions \ref{fullness on complexes} and \ref{essential surjectivity on complexes}, $F$ is full and essentially surjective, and the same is clearly true for the projection $\MMor_{N-2}(\A) \rightarrow \stab{N}{A}$.  It follows immediately that $\overline{F}$ is full and essentially surjective, hence an equivalence.
\end{proof}


\section{The $N$-Singularity Category}
\label{The N-Singularity Category}

Throughout this section, let $\A$ be an abelian category which is Frobenius exact.

There is a fully faithful additive functor $G:  \Mor_{N-2}(\A) \hookrightarrow C^b_N(\A)$ given by interpreting the object $(X_\bullet, \alpha_\bullet) \in \Mor_{N-2}(\A)$ as an $N$-complex concentrated in degrees $1$ through $N-1$.  In this section, we shall show that $G$ induces an equivalence $\overline{G}$ between $\stab{N}{\A}$ and $D^s_N(\A)$.

\begin{proposition}
\label{singularity functor}
$G$ induces a functor $\overline{G}: \stab{N}{\A} \rightarrow D^s_N(\A)$ of triangulated categories.
\end{proposition}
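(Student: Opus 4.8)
The plan is to check three things: that $\overline{G}$ is a well-defined additive functor $\stab{N}{\A}\to D^s_N(\A)$, that it intertwines the suspensions via a natural isomorphism $\phi\colon\overline{G}\,\Omega^{-1}\xrightarrow{\ \sim\ }\Sigma\,\overline{G}$, and that it preserves distinguished triangles. The single mechanism behind all of this is that an admissible short exact sequence in $\MMor_{N-2}(\A)$ is by definition degreewise exact, so $G$ sends it to a short exact sequence of bounded $N$-complexes, and such a sequence induces a distinguished triangle in $D^b_N(\A)$, hence in the Verdier quotient $D^s_N(\A)$ (as in \cite[Proposition 3.7]{iyama2017derived}). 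For well-definedness, note first that $G$ sends every projective-injective object of $\MMor_{N-2}(\A)$ to an object isomorphic to $0$ in $D^s_N(\A)$: by Proposition \ref{inj/proj characterization} such an object is a direct summand of a finite sum of the $\chi_i(P)_\bullet$ with $P$ projective, $G(\chi_i(P)_\bullet)$ is a bounded complex of projective objects, and $D^{perf}_N(\A)$ is thick. Hence any morphism in $\MMor_{N-2}(\A)$ that factors through a projective-injective is killed in $D^s_N(\A)$, and the composite $\MMor_{N-2}(\A)\xrightarrow{G}C^b_N(\A)\to D^b_N(\A)\to D^s_N(\A)$ descends to an additive functor $\overline{G}$ on $\stab{N}{\A}$.

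Next I would build $\phi$. For $X_\bullet\in\MMor_{N-2}(\A)$, the suspension $\Omega^{-1}X_\bullet$ is the cokernel of a chosen admissible monomorphism $X_\bullet\rightarrowtail I_\bullet$ into a projective-injective object. Applying $G$ gives a short exact sequence of $N$-complexes $G(X_\bullet)\rightarrowtail G(I_\bullet)\twoheadrightarrow G(\Omega^{-1}X_\bullet)$, and thus a triangle $G(X_\bullet)\to G(I_\bullet)\to G(\Omega^{-1}X_\bullet)\xrightarrow{\delta_{X_\bullet}}\Sigma G(X_\bullet)$ in $D^s_N(\A)$; since $G(I_\bullet)\cong 0$ there, $\delta_{X_\bullet}$ is an isomorphism, and I would set $\phi_{X_\bullet}:=\delta_{X_\bullet}$. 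For naturality, given $f_\bullet\colon X_\bullet\to Y_\bullet$ with chosen embeddings $X_\bullet\rightarrowtail I_\bullet$ and $Y_\bullet\rightarrowtail J_\bullet$, injectivity of $J_\bullet$ yields $\tilde{f}_\bullet\colon I_\bullet\to J_\bullet$ lying over $f_\bullet$, whose cokernel component represents $\Omega^{-1}f_\bullet$; applying $G$ produces a morphism between the two short exact sequences of $N$-complexes, and functoriality of the passage to triangles makes the square relating $\delta_{X_\bullet}$, $\delta_{Y_\bullet}$, $\Sigma\overline{G}(f_\bullet)$ and $\overline{G}(\Omega^{-1}f_\bullet)$ commute in $D^s_N(\A)$. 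Independence of $\phi_{X_\bullet}$ from the auxiliary choices follows, as before, from $\overline{G}$ annihilating morphisms through projective-injectives.

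Finally, for triangle preservation: every distinguished triangle in $\stab{N}{\A}$ is isomorphic to one of the form $X_\bullet\to Y_\bullet\to Z_\bullet\xrightarrow{h}\Omega^{-1}X_\bullet$ arising from an admissible short exact sequence $X_\bullet\rightarrowtail Y_\bullet\twoheadrightarrow Z_\bullet$, and $h$ is by construction the cokernel component of a morphism from that sequence to $X_\bullet\rightarrowtail I_\bullet\twoheadrightarrow\Omega^{-1}X_\bullet$. Applying $G$ and using functoriality of ``short exact sequence $\mapsto$ triangle'' once more gives $\phi_{X_\bullet}\circ\overline{G}(h)=\delta'$, where $\delta'$ is the connecting morphism of the triangle associated in $D^s_N(\A)$ to $G(X_\bullet)\rightarrowtail G(Y_\bullet)\twoheadrightarrow G(Z_\bullet)$; hence $\overline{G}$ applied to the triangle is exactly the distinguished triangle of that short exact sequence of $N$-complexes. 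I expect the only delicate point to be the coherence underlying $\phi$ and this last identity, namely that the connecting map produced by the Frobenius structure on $\MMor_{N-2}(\A)$ and the one produced in $D_N(\A)$ are matched by $G$; but this is essentially automatic, since the suspension on $D_N(\A)$ is itself induced by the Frobenius structure on $K_N(\A)$, so both connecting morphisms come from the same ``short exact sequence via suspension'' recipe and agree by functoriality. The rest is routine bookkeeping.
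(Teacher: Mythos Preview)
Your proposal is correct and follows essentially the same approach as the paper: both argue that $G$ sends projective-injectives of $\MMor_{N-2}(\A)$ to perfect complexes (hence to zero in $D^s_N(\A)$), so the composite descends to $\overline{G}$; both then use that admissible short exact sequences go to short exact sequences of $N$-complexes and hence to distinguished triangles via \cite[Proposition~3.7]{iyama2017derived}, and build $\phi$ from the triangle induced by $X_\bullet\rightarrowtail I_{X\bullet}\twoheadrightarrow\Omega^{-1}X_\bullet$. Your write-up is more explicit about naturality of $\phi$ and the matching of connecting maps, but this is exactly what the paper means by ``it follows easily.''
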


\begin{proof}
Let $G'$ denote the composition $$\MMor_{N-2}(\A) \xhookrightarrow{G} C^b_N(\A) \rightarrow D^b_N(\A) \rightarrow D^s_N(\A)$$

By Proposition \ref{inj/proj characterization}, $G$ maps projective objects in $\MMor_{N-2}(\A)$ to perfect complexes, hence $G'$ sends projective objects to zero.  Thus $G'$ induces an additive functor $\overline{G}: \stab{N}{\A} \rightarrow D^s_N(\A)$.

If $X_\bullet \rightarrowtail Y_\bullet \twoheadrightarrow Z_\bullet$ is admissible in $\MMor_{N-2}(\A)$, apply $G$ to obtain a short exact sequence in $C^b_{N}(\A)$.  By \cite[Proposition 3.7]{iyama2017derived}, there is a corresponding distinguished triangle $G(X_\bullet) \rightarrow G(Y_\bullet) \rightarrow G(Z_\bullet) \rightarrow \Sigma G(X_\bullet)$ in $D^b_N(\A)$, hence in $D^s_N(\A)$.

Consider an admissible exact sequence $X_\bullet \rightarrowtail I_{X\bullet} \twoheadrightarrow \Omega^{-1}X_\bullet$, with $I_{X\bullet}$ injective.  This induces a triangle $G(X_\bullet) \rightarrow 0 \rightarrow G(\Omega^{-1} X_\bullet) \xrightarrow{\phi_X} \Sigma G(X_\bullet)$ in $D^s_N(\A)$, which defines a natural isomorphism $\phi:  \overline{G}\Omega^{-1} \xrightarrow{\sim} \Sigma \overline{G}$.  Since every distinguished triangle in $\stab{N}{\A}$ is isomorphic to one arising from an admissible short exact sequence in $\MMor_{N-2}(\A)$, it follows easily that $(\overline{G}, \phi)$ is a triangulated functor.
\end{proof}

The functor $G$ also gives a canonical embedding of $\Mor_{N-2}(\A)$ into $D^b_{N-2}(\A)$.  With some extra hypotheses on $\A$, this is a corollary of \cite[Theorem 4.2]{iyama2017derived}; however, the proof below is valid for an arbitrary abelian category (which need not be Frobenius exact).

\begin{proposition}
\label{canonical embedding}
The composition $\Mor_{N-2}(\A) \xrightarrow{G} C^b_N(\A) \rightarrow D^b_N(\A)$ is fully faithful.
\end{proposition}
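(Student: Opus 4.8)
The plan is to exhibit an explicit retraction of the functor rather than to wrestle with roofs by hand. Abusively denote by $G$ also the composition $\Mor_{N-2}(\A)\to C^b_N(\A)\to D^b_N(\A)$. I would define a functor $\mathbb{H}\colon C^b_N(\A)\to\Mor_{N-2}(\A)$ sending an $N$-complex $W^\bullet$ to its ``window of homologies''
\begin{equation*}
\mathbb{H}(W^\bullet):=\Bigl(H^1_{N-1}(W^\bullet)\xrightarrow{\ \overline{d}^{\,1}\ }H^2_{N-2}(W^\bullet)\xrightarrow{\ \overline{d}^{\,2}\ }\cdots\xrightarrow{\ \overline{d}^{\,N-2}\ }H^{N-1}_1(W^\bullet)\Bigr),
\end{equation*}
where $\overline{d}^{\,i}$ is induced by $d_W^i$; one checks at once that $d_W^i$ carries $Z^i_{N-i}(W^\bullet)$ into $Z^{i+1}_{N-i-1}(W^\bullet)$ and $B^i_{N-i}(W^\bullet)$ into $B^{i+1}_{N-i-1}(W^\bullet)$, so $\mathbb{H}$ is a well-defined additive functor. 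Because each standard projective-injective $\mu^n_N(M)$ is acyclic, $\mathbb{H}$ annihilates null-homotopic morphisms and descends to $K^b_N(\A)$; and because a quasi-isomorphism is exactly a morphism inducing isomorphisms on every $H^n_r$, $\mathbb{H}$ inverts quasi-isomorphisms, hence induces $\overline{\mathbb{H}}\colon D^b_N(\A)\to\Mor_{N-2}(\A)$.

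Next I would record the identity $\overline{\mathbb{H}}\circ G\cong\mathrm{id}_{\Mor_{N-2}(\A)}$. This is immediate: for $(X_\bullet,\alpha_\bullet)\in\Mor_{N-2}(\A)$ the complex $G(X_\bullet)$ vanishes in degrees $0$ and $N$, so $Z^i_{N-i}(G(X_\bullet))=X_i$ and $B^i_{N-i}(G(X_\bullet))=0$ for $1\le i\le N-1$, giving $\mathbb{H}(G(X_\bullet))=(X_\bullet,\alpha_\bullet)$ naturally in $X_\bullet$. In particular $G$ is faithful. It therefore suffices to show $\overline{\mathbb{H}}$ is injective on each group $\Hom_{D^b_N(\A)}(G(X_\bullet),G(Y_\bullet))$: granting that, any $\Phi$ in such a group has $\overline{\mathbb{H}}\bigl(\Phi-G(\overline{\mathbb{H}}(\Phi))\bigr)=\overline{\mathbb{H}}(\Phi)-\overline{\mathbb{H}}(\Phi)=0$, hence $\Phi=G(\overline{\mathbb{H}}(\Phi))$ lies in the image of $G$ and $G$ is full.

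The heart of the matter is this injectivity, which I would obtain by a single truncation. Write $\Phi=g^\bullet (s^\bullet)^{-1}$ for a roof $G(X_\bullet)\xleftarrow{s^\bullet}W^\bullet\xrightarrow{g^\bullet}G(Y_\bullet)$ with $s^\bullet$ a quasi-isomorphism and $W^\bullet$ bounded. Since $G(X_\bullet)$ has homology only in degrees $1,\dots,N-1$ and $s^\bullet$ is a quasi-isomorphism, $H^n_r(W^\bullet)=0$ for $n>N-1$; hence the inclusion $\sigma_{\le N-1}W^\bullet\hookrightarrow W^\bullet$ is a quasi-isomorphism (Iyama--Kato--Miyachi \cite{iyama2017derived}), and replacing $W^\bullet$ by $\sigma_{\le N-1}W^\bullet$ we may assume $W^\bullet$ is concentrated in degrees $\le N-1$. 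Now $\overline{\mathbb{H}}(\Phi)=0$ forces $\mathbb{H}(g^\bullet)=0$, i.e. $g^i$ carries $Z^i_{N-i}(W^\bullet)$ into $B^i_{N-i}(G(Y_\bullet))$ for $1\le i\le N-1$. But $W^N=0$ makes $Z^i_{N-i}(W^\bullet)=\ker\bigl(d_W^{i,N-i}\colon W^i\to W^N\bigr)=W^i$, and $G(Y_\bullet)^0=0$ makes $B^i_{N-i}(G(Y_\bullet))=\operatorname{im}\bigl(d^{0,i}_{G(Y_\bullet)}\bigr)=0$. Thus $g^i=0$ for $1\le i\le N-1$, and $g^i=0$ trivially for $i\notin[1,N-1]$ since source or target vanishes there; hence $g^\bullet=0$ and $\Phi=0$.

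I expect the only delicate points to be the bookkeeping that $\mathbb{H}$ really descends all the way to $D^b_N(\A)$ — which rests entirely on the two standard facts recalled in Section \ref{Definitions and Notation}, namely that quasi-isomorphisms are precisely the $H^n_r$-isomorphisms and that the $\mu^n_N(M)$ are acyclic — and the degree count in the last paragraph. Notably this argument uses neither the monomorphism hypotheses on $X_\bullet$ nor any Frobenius assumption on $\A$, consistent with the generality asserted for the statement.
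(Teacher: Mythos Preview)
Your proof is correct and follows essentially the same route as the paper's. The paper also truncates the roof via $\sigma_{\le N-1}$ and extracts the morphism $f_i = H^i_{N-i}(g^\bullet)\circ H^i_{N-i}(s^\bullet)^{-1}$; your packaging of this as a retraction functor $\overline{\mathbb{H}}$ is a clean reorganization that unifies the fullness and faithfulness arguments, but the technical content (the $H^i_{N-i}$ window, the truncation, and the degree count forcing $g^i=0$) is the same.
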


\begin{proof}
Let $(X_\bullet, \alpha_\bullet)$, $(Y_\bullet, \beta_\bullet) \in \Mor_{N-2}(\A)$.

To prove fullness, take a morphism $h:  G(X_\bullet) \rightarrow G(Y_\bullet)$ in $D^b_N(\A)$.  Write $h$ as the span $G(X_\bullet) \xleftarrow{s^\bullet} M^\bullet \xrightarrow{g^\bullet} G(Y_\bullet)$, where $s^\bullet$ is a quasi-isomorphism.  Since $G(X_\bullet)$ is concentrated in degrees $1$ through $N-1$, the natural map $\iota^\bullet: \sigma_{\le N-1}M^\bullet \hookrightarrow M^\bullet$ is also a quasi-isomorphism; thus $h$ can be written as $G(X_\bullet) \xleftarrow{s^\bullet \iota^\bullet} \sigma_{\le N-1}M^\bullet \xrightarrow{g^\bullet \iota^\bullet} G(Y_\bullet)$.  Let $f_\bullet: X_\bullet \rightarrow Y_\bullet$ be given by $f_i = H^i_{N-i}(g^\bullet) \circ H^i_{N-i}(s^\bullet)^{-1}$.

To see that $f_\bullet$ defines a morphism in $\Mor_{N-2}(\A)$, consider for each $1 \le i \le N-1$ the commutative diagrams
\begin{eqnarray}
\label{homology squares}
\begin{tikzcd}[column sep=small]
Z^i_{N-i}(M^\bullet) \arrow[d, "s^i \iota^i"] \arrow[r, two heads, "\pi^i"] & H^i_{N-i}(M^\bullet) \arrow[d, "H^i_{N-i}(s^\bullet)", swap] \arrow[d, "\sim"]\\
Z^i_{N-i}(G(X_\bullet)) \arrow[r, two heads] & H^i_{N-i}(G(X_\bullet))
\end{tikzcd},
\begin{tikzcd}[column sep=small]
Z^i_{N-i}(M^\bullet) \arrow[d, "g^i \iota^i"] \arrow[r, two heads, "\pi^i"] & H^i_{N-i}(M^\bullet) \arrow[d, "H^i_{N-i}(g^\bullet)", swap]\\
Z^i_{N-i}(G(Y_\bullet)) \arrow[r, two heads] & H^i_{N-i}(G(Y_\bullet))
\end{tikzcd}
\end{eqnarray}
Note that $Z^i_{N-i}(G(X_\bullet)) = H^i_{N-i}(G(X_\bullet)) = X_i$, and similarly for $Y_i$.  Thus the lower morphisms in both diagrams are just the identity maps on $X_i$ and $Y_i$.  In particular, $s^i\iota^i$ is an epimorphism.  We also have that
\begin{eqnarray}
\label{roof equation}
f_i \circ s^i \iota^i = H^i_{N-i}(g^\bullet) H^i_{N-i}(s^\bullet)^{-1} \circ s^i \iota^i = H^i_{N-i}(g^\bullet)\pi^i = g^i \iota^i
\end{eqnarray}
It follows that, for $1 \le i < N-1$,
\begin{eqnarray*}
f_{i+1} \alpha_i \circ s^i \iota^i = f_{i+1}s^{i+1}\iota^{i+1} d_M^i = g^{i+1}\iota^{i+1} d_M^i = \beta_i g^i \iota^i = \beta_i f_i \circ s^i \iota^i
\end{eqnarray*}
Since $s^i \iota^i$ is an epimorphism, we conclude that $f_{i+1} \alpha_i = \beta_i f^i$, hence $f_\bullet$ is a morphism.  From Equation (\ref{roof equation}) it follows immediately that $h = G(f_\bullet)$ in $D^b_{N}(\A)$.  Thus the functor is full.

To prove faithfulness, let $f_\bullet: X_\bullet \rightarrow Y_\bullet$ be such that $G(f_\bullet) = 0$ in $D^b_N(\A)$.  Then there is a quasi-isomorphism $s^\bullet: M^\bullet \rightarrow G(X_\bullet)$ such that $G(f_\bullet) s^\bullet = 0$ in $K^b_N(\A)$.  Define as above the quasi-isomorphism $\iota^\bullet: \sigma_{\le N-1}M^\bullet \hookrightarrow M^\bullet$; it follows that $G(f_\bullet) s^\bullet \iota^\bullet = 0$ in $K^b_N(\A)$.  Since $G(Y_\bullet)$ is concentrated in degrees $1$ through $N-1$, it is easily checked that the only null-homotopic morphism of complexes from $\sigma_{\le N-1}M^\bullet$ to $G(Y_\bullet)$ is the zero map.  Thus $G(f_\bullet) s^\bullet \iota^\bullet = 0$ in $C^b_N(\A)$; that is, $f_i s^i \iota^i = 0$ for all $1 \le i \le N-1$.

Note that the left square in (\ref{homology squares}) remains valid for all $1 \le i \le N-1$.  In particular, $s^i \iota^i: Z^i_{N-i}(M^\bullet) \twoheadrightarrow X_i$ is an epimorphism.  Thus $f_i = 0$ for all $i$.  Since $f_\bullet = 0$, the functor is faithful.
\end{proof}

We shall prove the following theorem via a sequence of lemmas.

\begin{theorem}
\label{stable-singularity equivalence}
Let $\A$ be an abelian category which is Frobenius exact.  Then $\overline{G}: \stab{N}{\A} \rightarrow D^s_N(\A)$ is an equivalence.
\end{theorem}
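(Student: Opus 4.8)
The plan is to show that $\overline{G}$ is essentially surjective, full, and faithful, through a short sequence of lemmas. Two tools will be used throughout. First, a bounded $N$-complex of projective-injective objects is both homotopically projective and homotopically injective (see \cite{iyama2017derived}), so morphisms into or out of it in $D^b_N(\A)$ are represented by honest chain maps; combined with the observation that a null-homotopic morphism between $N$-complexes supported in the $N-1$ consecutive degrees $1,\dots,N-1$ is necessarily zero, this means that on such complexes the morphisms in $D^b_N(\A)$ agree with the morphisms in $C^b_N(\A)$, which by Proposition \ref{canonical embedding} are exactly the morphisms of $\Mor_{N-2}(\A)$. Second, in the Verdier quotient $D^s_N(\A)=D^b_N(\A)/D^{perf}_N(\A)$ a morphism vanishes precisely when it factors through a perfect complex, and — since $\A$ is Frobenius exact — an object of $\A$ of finite projective dimension is already projective (the top of a finite projective resolution splits off, because projectives are injective).

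The central lemma I would establish first is: for $Z_\bullet\in\MMor_{N-2}(\A)$, the complex $G(Z_\bullet)$ is perfect (equivalently $\overline{G}(Z_\bullet)\cong 0$) if and only if $Z_\bullet$ is projective in $\MMor_{N-2}(\A)$. The backward direction is immediate, since then $G(Z_\bullet)$ is a finite direct sum of $N$-complexes $\mu^{N-1}_j(P)$ with $P$ projective, which are bounded complexes of projectives. For the forward direction, a direct computation shows that the only nonzero homology of $G(Z_\bullet)$ occurs in degrees $1\le i\le N-1$ with $i+r\ge N$; in particular $H^i_{N-i}(G(Z_\bullet))=Z_i$ and $H^{i+1}_{N-1}(G(Z_\bullet))=\Cok(\iota_i)$ for the relevant $i$. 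Since perfect complexes form a thick subcategory of $D^b_N(\A)$ and every homology object of a bounded complex of projectives has finite projective dimension, perfection of $G(Z_\bullet)$ forces each $Z_i$ and each $\Cok(\iota_i)$ to be projective; the cokernels are then injective, so each $\iota_i$ splits, and Proposition \ref{inj/proj characterization} shows $Z_\bullet$ is projective.

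For essential surjectivity, given $Y^\bullet\in D^b_N(\A)$ I would choose a quasi-isomorphism $P^\bullet\to Y^\bullet$ from a bounded-above $N$-complex of projectives (see \cite{iyama2017derived}). For a large $n$ divisible by $N$, the good truncation $\sigma_{>-n-1}P^\bullet$ is a bounded $N$-complex still quasi-isomorphic to $Y^\bullet$; its sharp truncation $\tau_{\ge -n}\sigma_{>-n-1}P^\bullet$ is a bounded complex of projectives, hence perfect, and the quotient of the resulting inclusion of $N$-complexes is supported in the $N-1$ consecutive degrees $-n-N+1,\dots,-n-1$, with differentials the monomorphisms $P^i/Z^i_{-n-i}(P^\bullet)\to P^{i+1}/Z^{i+1}_{-n-i-1}(P^\bullet)$ (monic because $(d^i_P)^{-1}(Z^{i+1}_{-n-i-1}(P^\bullet))=Z^i_{-n-i}(P^\bullet)$). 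This identifies $Y^\bullet$, in $D^s_N(\A)$, with $G(X_\bullet)[n+N]$ for a suitable $X_\bullet\in\MMor_{N-2}(\A)$; since $\Sigma^2\cong[N]$ and $\overline{G}$ is triangulated, $G(X_\bullet)[n+N]\cong\Sigma^{2(n/N+1)}\overline{G}(X_\bullet)\cong\overline{G}(\Omega^{-2(n/N+1)}X_\bullet)$, so $Y^\bullet$ lies in the essential image of $\overline{G}$.

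It remains to prove that $\overline{G}$ is fully faithful; the map $\Hom_{\stab{N}{\A}}(X_\bullet,X'_\bullet)\to\Hom_{D^s_N(\A)}(G(X_\bullet),G(X'_\bullet))$ is already well defined by Proposition \ref{singularity functor}. I would compute the right-hand side as a filtered colimit of $\Hom_{D^b_N(\A)}(W^\bullet,G(X'_\bullet))$ over morphisms $W^\bullet\to G(X_\bullet)$ with perfect cone, then apply the truncation construction above (tracking the suspension shift as in the previous step) to make the apices of the form $G(V_\bullet)$ with $V_\bullet\in\MMor_{N-2}(\A)$ cofinal, and then use Proposition \ref{canonical embedding} to rewrite the colimit in terms of $\Hom_{\Mor_{N-2}(\A)}(V_\bullet,X'_\bullet)$. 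The central lemma above says that the maps $V_\bullet\to X_\bullet$ with perfect cone are exactly those becoming isomorphisms in $\stab{N}{\A}$, and that the maps factoring through projectives of $\MMor_{N-2}(\A)$ are exactly those sent to $0$; this should collapse the colimit onto $\Hom_{\stab{N}{\A}}(X_\bullet,X'_\bullet)$. I expect this last step to be the main obstacle: the delicate points are making the replacement of apices functorial enough to yield genuine cofinality and keeping the suspension shifts compatible with the triangulated structure on $\stab{N}{\A}$. A possible alternative is to transport the statement along the equivalence $\overline{F}$ of Theorem \ref{acyclic-stable equivalence} together with the equivalence $K^{ac}_N(\Proj{\A})\simeq D^s_N(\A)$ of \cite{BHN}, after checking that $\overline{G}\circ\overline{F}$ agrees with it.
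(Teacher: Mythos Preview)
Your proposal contains a genuine gap: the argument for the forward direction of your ``central lemma'' is incorrect. You assert that every homology object of a bounded $N$-complex of projectives has finite projective dimension, but this is false. Over $A=k[x]/(x^2)$ (which is Frobenius), the $2$-complex $A\xrightarrow{x}A$ is perfect, yet $H^0=\ker(x)\cong k$ has infinite projective dimension. The same phenomenon persists for $N>2$: for $N=3$, the complex $A\xrightarrow{x}A\xrightarrow{x}A$ is a bounded complex of projectives with $H^0_1\cong k$. Thus perfection of $G(Z_\bullet)$ does not, by the route you indicate, force each $Z_i$ or $\Cok(\iota_i)$ to be projective. Since your faithfulness and fullness arguments both rest on this lemma (and you yourself flag the cofinality and shift-tracking in the colimit computation as unresolved), the fully faithful part of the proof is not established.

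Your essential surjectivity argument, by contrast, is sound and gives a nice alternative to the paper's approach. The paper instead shows that the essential image of $\overline{G}$ is a triangulated subcategory and then checks it contains the generating family $\{\mu^k_i(X)\}$; your truncation-of-resolution argument is closer in spirit to the classical Buchweitz proof and avoids any case analysis on the shift.

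For comparison, the paper proves faithfulness directly: if $\overline{G}(f_\bullet)=0$, one factors $G(f_\bullet)$ in $D^b_N(\A)$ through a bounded complex $I^\bullet$ of projective-injectives, represents the maps by honest chain maps via Lemma~\ref{morphism simplification}, and then builds by hand an object $(I'_\bullet,\iota_\bullet)\in\Proj{\MMor_{N-2}(\A)}$ through which $f_\bullet$ factors. Fullness is obtained by first showing (Lemma~\ref{fullness lemma 1}) that every morphism $\overline{G}(X_\bullet)\to\overline{G}(Y_\bullet)$ in $D^s_N(\A)$ lifts to $D^b_N(\A)$, and then invoking Proposition~\ref{canonical embedding}. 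Neither step requires your central lemma. Your suggested alternative---transporting along $\overline{F}$ and the equivalence of \cite{BHN}---would also work, but the paper gives an independent proof.
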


First, it will be helpful to more easily express morphisms in $D_N(\A)$.  The following proposition is completely analogous to the known result for $N=2$.  It holds for any abelian category and does not require the hypothesis of Frobenius exactness.

\begin{lemma}
\label{morphism simplification}
Let $X^\bullet \in K_N(\A)$, $P^\bullet \in K^-_N(\Proj{\A})$, $I^\bullet \in K^+_N(\Inj{\A})$.  Let $f: P^\bullet \rightarrow X^\bullet$ and $g: X^\bullet \rightarrow I^\bullet$ be morphisms in $D_N(\A)$.  Then $f$ and $g$ can be represented by morphisms in $K_N(\A)$.  
\end{lemma}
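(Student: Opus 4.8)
The plan is to reduce the statement to two vanishing assertions in $K_N(\A)$ and then to invert a quasi-isomorphism inside the homotopy category; accordingly, I would first prove that $\Hom_{K_N(\A)}(P^\bullet, A^\bullet) = 0$ whenever $P^\bullet \in K^-_N(\Proj{\A})$ and $A^\bullet$ is acyclic, and, dually, that $\Hom_{K_N(\A)}(A^\bullet, I^\bullet) = 0$ whenever $A^\bullet$ is acyclic and $I^\bullet \in K^+_N(\Inj{\A})$; neither requires $\A$ to be Frobenius exact. For the first, take a chain map $\phi^\bullet \colon P^\bullet \to A^\bullet$ with $P^n = 0$ for $n > n_0$ and build $h^i \colon P^i \to A^{i - N + 1}$ by descending induction on $i$ so that $(h^n, \dots, h^{n+N-1})$ is a homotopy of $\phi^\bullet$ at $n$ for every $n \in \Z$. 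The base case is $h^i = 0$ for $i > n_0$: then $P^n = 0$ for $n > n_0$, so $\phi^n$ and $S_h(n,0,N)$ both vanish and the homotopy relation at $n$ is automatic. For the inductive step, a homotopy at $n$ shortens to a seed at $n$, and one applies the construction of $h^{n-1}$ from the proof of Lemma \ref{seed lemma 2}; it produces $h^{n-1}$ making $(h^{n-1}, \dots, h^{n+N-2})$ a homotopy at $n - 1$, and, inspecting that argument, it uses only the chain-map identity for $\phi^\bullet$, the seed property at $n$, the acyclicity of the target, and the projectivity of $P^{n-1}$, so it applies verbatim with the (a priori neither projective nor injective) acyclic complex $A^\bullet$ in the role of a complex of projective-injectives. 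Since no $h^i$ is altered once defined, the resulting family exhibits $\phi^\bullet$ as null-homotopic, hence zero in $K_N(\A)$. The second vanishing statement I would prove dually, constructing the $h^i$ by ascending induction with base case $h^i = 0$ for $i \ll 0$ (legitimate because $I^\bullet$ is bounded below) and using the construction of $h^{n+N-1}$ from the proof of Lemma \ref{seed lemma 2}, which needs only the acyclicity of the source $A^\bullet$ and the injectivity of $I^n$; alternatively, one may pass to $\A^{op}$, under which the canonical isomorphism $C_N(\A)^{op} \cong C_N(\A^{op})$ carries $K^-_N(\Proj{\A})$ to $K^+_N(\Inj{\A^{op}})$ and preserves acyclicity, reducing the second statement to the first.

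Granting these, I would finish as in the classical case. Write $f \colon P^\bullet \to X^\bullet$ as a right fraction $P^\bullet \xleftarrow{s} C^\bullet \xrightarrow{u} X^\bullet$ with $s$ a quasi-isomorphism, so that $\operatorname{cone}(s)$, and hence all its shifts (the acyclic $N$-complexes form a thick subcategory of $K_N(\A)$), are acyclic. Applying the cohomological functor $\Hom_{K_N(\A)}(P^\bullet, -)$ to the triangle $C^\bullet \xrightarrow{s} P^\bullet \to \operatorname{cone}(s) \to \Sigma C^\bullet$ and using the first vanishing statement to annihilate $\Hom_{K_N(\A)}(P^\bullet, \operatorname{cone}(s))$ shows that $s_* \colon \Hom_{K_N(\A)}(P^\bullet, C^\bullet) \to \Hom_{K_N(\A)}(P^\bullet, P^\bullet)$ is surjective; picking $r$ with $s r = \mathrm{id}_{P^\bullet}$ in $K_N(\A)$ gives $r = s^{-1}$ in $D_N(\A)$, whence $f = u s^{-1} = u r$ is the image of $u r \in \Hom_{K_N(\A)}(P^\bullet, X^\bullet)$. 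Symmetrically, writing $g \colon X^\bullet \to I^\bullet$ as a left fraction $X^\bullet \xrightarrow{u} C^\bullet \xleftarrow{s} I^\bullet$ with $s$ a quasi-isomorphism, applying $\Hom_{K_N(\A)}(-, I^\bullet)$ to $I^\bullet \xrightarrow{s} C^\bullet \to \operatorname{cone}(s) \to \Sigma I^\bullet$, and using the second vanishing statement, one obtains $r$ with $r s = \mathrm{id}_{I^\bullet}$ in $K_N(\A)$, so that $g = s^{-1} u = r u$ is represented by $r u \in \Hom_{K_N(\A)}(X^\bullet, I^\bullet)$.

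The real work is in the vanishing statements. For $N = 2$ a null-homotopy is assembled one degree at a time; for general $N$ the homotopy relation couples $N$ consecutive components, so the induction has to be carried through the auxiliary notion of a seed exactly as in Lemma \ref{seed lemma 2}. The genuinely new ingredients are only that the boundedness of $P^\bullet$ (resp.\ of $I^\bullet$) supplies the base case, and that one must verify the relevant half of the seed construction requires nothing beyond acyclicity together with projectivity (resp.\ injectivity); the homological-algebra identities involved are the same ones already checked in the proof of Lemma \ref{seed lemma 2}.
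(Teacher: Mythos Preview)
Your proof is correct and follows essentially the same route as the paper: represent the derived morphism by a roof, use the vanishing $\Hom_{K_N(\A)}(P^\bullet, A^\bullet)=0$ (resp.\ $\Hom_{K_N(\A)}(A^\bullet, I^\bullet)=0$) for $A^\bullet$ acyclic to split the quasi-isomorphism in $K_N(\A)$, and read off the representative. The only difference is that the paper quotes the vanishing statement from \cite[Lemma 3.3]{iyama2017derived}, whereas you supply a self-contained proof by adapting the seed/homotopy induction of Lemma~\ref{seed lemma 2}; your observation that the backward step there needs only projectivity of $P^{n-1}$ and acyclicity of the target (and dually for the forward step) is accurate, so the argument goes through.
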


\begin{proof}
Express $f$ as the span $P^\bullet \xleftarrow{p^\bullet} Q^\bullet \xrightarrow{h^\bullet} X^\bullet$, where $p^\bullet$ is a quasi-isomorphism.  Then $p^\bullet$ fits into a triangle $\Sigma^{-1}C^\bullet \rightarrow Q^\bullet \xrightarrow{p^\bullet} P^\bullet \rightarrow C^\bullet$ in $K_N(\A)$, where $C^\bullet$ is an acyclic $N$-complex.  By \cite[Lemma 3.3]{iyama2017derived}, $\Hom_{K_N(\A)}(P^\bullet, C^\bullet) = 0$.  Since the last map in the above triangle is zero, the map $p^\bullet$ admits a section $s^\bullet: P^\bullet \rightarrow Q^\bullet$ in $K_N(\A)$.  It follows that the span representing $f$ is equivalent to $P^\bullet \xleftarrow{id} P^\bullet \xrightarrow{h^\bullet s^\bullet} X^\bullet$, hence $f$ is equal to the morphism of complexes $h^\bullet s^\bullet$.

Similarly, express $g$ as a cospan $X^\bullet \xrightarrow{e^\bullet} J^\bullet \xleftarrow{i^\bullet} I^\bullet$, where $i^\bullet$ is a quasi-isomorphism.  Extend $i^\bullet$ to the triangle $D^\bullet \rightarrow I^\bullet \xrightarrow{i^\bullet} J^\bullet \rightarrow \Sigma D^\bullet$ in $K_N(\A)$, for some acyclic $D^\bullet$.  Again by \cite[Lemma 3.3]{iyama2017derived}, there are no nonzero morphisms from $D^\bullet$ to $I^\bullet$, hence $i^\bullet$ admits a retraction $r^\bullet$ in $K_N(\A)$.  Thus $g$ is equal to the span $X^\bullet \xrightarrow{r^\bullet e^\bullet} I^\bullet \xleftarrow{id} I^\bullet$, hence $g = r^\bullet e^\bullet$.
\end{proof}

\begin{lemma}
\label{G faithful}
$\overline{G}$ is faithful.
\end{lemma}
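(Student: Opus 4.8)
The plan is to prove faithfulness by showing that any $f_\bullet\colon X_\bullet\to Y_\bullet$ in $\MMor_{N-2}(\A)$ with $\overline{G}(f_\bullet)=0$ factors through a projective-injective object of $\MMor_{N-2}(\A)$, hence is zero in $\stab{N}{\A}$. Since $D^s_N(\A)=D^b_N(\A)/D^{perf}_N(\A)$ and a morphism in a Verdier quotient vanishes precisely when it factors through an object of the quotiented subcategory, the hypothesis yields a factorization $G(X_\bullet)\xrightarrow{a}R^\bullet\xrightarrow{b}G(Y_\bullet)$ in $D^b_N(\A)$ with $R^\bullet$ a bounded complex of projectives; as $\A$ is Frobenius exact, each $R^i$ is moreover injective. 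A bounded complex of projective-injectives lies in both $K^-_N(\Proj{\A})$ and $K^+_N(\Inj{\A})$, so Lemma \ref{morphism simplification} lets me represent $a$ and $b$ by genuine chain maps $\widetilde{a}\colon G(X_\bullet)\to R^\bullet$ and $\widetilde{b}\colon R^\bullet\to G(Y_\bullet)$, with $\widetilde{b}\widetilde{a}=G(f_\bullet)$ in $D^b_N(\A)$.

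Next I would truncate $R^\bullet$ into degrees $1$ through $N-1$. Because $G(X_\bullet)$ is concentrated in those degrees and vanishes in degree $N$, the chain map $\widetilde{a}$ corestricts to a chain map into the sharp truncation $\tau_{\ge 1}R^\bullet$ (the only point to check, that $d_R^{N-1}\widetilde{a}^{N-1}=0$, is automatic since $\widetilde{a}^N=0$); dually $\widetilde{b}$ factors through the quotient $\tau_{\le N-1}R^\bullet$. The composite $\tau_{\ge 1}R^\bullet\hookrightarrow R^\bullet\twoheadrightarrow\tau_{\le N-1}R^\bullet$ factors as $\tau_{\ge 1}R^\bullet\twoheadrightarrow\tau_{\le N-1}\tau_{\ge 1}R^\bullet\hookrightarrow\tau_{\le N-1}R^\bullet$, and $\tau_{\le N-1}\tau_{\ge 1}R^\bullet=G(R_\bullet)$ for the object $R_\bullet=(R^1\to\cdots\to R^{N-1})$ of $\Mor_{N-2}(\A)$. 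Thus $G(f_\bullet)$ factors in $D^b_N(\A)$ through $G(R_\bullet)$ via chain maps $G(X_\bullet)\to G(R_\bullet)\to G(Y_\bullet)$. By Proposition \ref{canonical embedding} the functor $G\colon\Mor_{N-2}(\A)\to D^b_N(\A)$ is fully faithful, so these two chain maps are $G(a_\bullet)$ and $G(b_\bullet)$ for unique morphisms $a_\bullet\colon X_\bullet\to R_\bullet$ and $b_\bullet\colon R_\bullet\to Y_\bullet$ in $\Mor_{N-2}(\A)$, and faithfulness of $G$ then forces $f_\bullet=b_\bullet a_\bullet$ in $\Mor_{N-2}(\A)$.

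Finally I would upgrade this to a factorization through a projective-injective object of $\MMor_{N-2}(\A)$. The object $R_\bullet$ has projective-injective components but arbitrary structure maps $\rho_i\colon R^i\to R^{i+1}$; I replace it by $\Pi_\bullet\in\MMor_{N-2}(\A)$ with $\Pi_i:=\bigoplus_{j=1}^{i}R^j$ and with split-mono structure map $\Pi_i\to\Pi_i\oplus R^{i+1}$ which is the identity on the summands $R^1,\dots,R^{i-1}$ and sends the $R^i$-summand in by $x\mapsto(x,\rho_i x)$. By Proposition \ref{inj/proj characterization}, $\Pi_\bullet$ is projective-injective in $\MMor_{N-2}(\A)$. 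Using injectivity of $R^j$ I extend each $a_j\colon X_j\to R^j$ along the admissible monomorphism $X_j\hookrightarrow X_{N-1}$ to $\widehat{a_j}\colon X_{N-1}\to R^j$, and set $\alpha_i\colon X_i\to\Pi_i$ to be the column of restrictions $\widehat{a_j}|_{X_i}$ for $j\le i$ (its last entry being $a_i$), and $\beta_i\colon\Pi_i\to Y_i$ to be $b_i$ on the $R^i$-summand and $0$ on the others. A direct check, using only that $a_\bullet$ and $b_\bullet$ are morphisms in $\Mor_{N-2}(\A)$, shows that $\alpha_\bullet$ and $\beta_\bullet$ are morphisms in $\MMor_{N-2}(\A)$ and that $\beta_\bullet\alpha_\bullet=f_\bullet$; hence $f_\bullet$ is zero in $\stab{N}{\A}$, and $\overline{G}$ is faithful.

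The main obstacle is the last step: the inclusion $\MMor_{N-2}(\A)\hookrightarrow\Mor_{N-2}(\A)$ has no adjoint, so one cannot abstractly ``correct'' $R_\bullet$ into the monomorphism category, and the telescoping object $\Pi_\bullet$ together with the verification that $f_\bullet$ really factors through it is where the genuine content sits. The truncation bookkeeping in the middle paragraph is routine but needs care, since for $N$-complexes $\tau_{\ge n}(-)$ behaves as a subcomplex functor while $\tau_{\le n}(-)$ behaves as a quotient functor.
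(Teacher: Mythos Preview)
Your proof is correct and follows essentially the same strategy as the paper: factor $G(f_\bullet)$ in $D^b_N(\A)$ through a bounded complex of projective-injectives via chain maps (using Lemma~\ref{morphism simplification}), restrict attention to degrees $1$ through $N-1$, invoke Proposition~\ref{canonical embedding} to transport the factorization back to $\Mor_{N-2}(\A)$, and finally build the telescoping projective-injective object $\Pi_i=\bigoplus_{j\le i}R^j$ to factor $f_\bullet$ inside $\MMor_{N-2}(\A)$. The only cosmetic differences are that the paper phrases the initial step as ``$\overline{G}(f_\bullet)=0$ implies it lifts through the cocone of some morphism with perfect cone'' rather than ``factors through a perfect object'', and that the paper constructs the map $X_\bullet\to\Pi_\bullet$ by an inductive lifting along $X_{i-1}\hookrightarrow X_i$, whereas you extend each $a_j$ once and for all to $X_{N-1}$; both constructions yield the required factorization. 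Your parenthetical about $d_R^{N-1}\widetilde a^{N-1}=0$ is unnecessary for the corestriction to $\tau_{\ge 1}R^\bullet$, but it is harmless.
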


\begin{proof}
Let $(X_\bullet, \alpha_\bullet), (Y_\bullet, \beta_\bullet) \in \stab{N}{\A}$, and let $f_\bullet : X_\bullet \rightarrow Y_\bullet$ be a fixed representative of a morphism in $\stab{N}{\A}$.  Suppose $\overline{G}(f_\bullet) = 0$.

We first show that $f_\bullet$ factors through an object in $\Mor_{N-2}(\Proj{\A})$.  Since $\overline{G}(f_\bullet) = 0$ in $D^s_N(\A)$, there exists a morphism with perfect cone $s: \overline{G}(Y_\bullet) \rightarrow M^\bullet$ in $D^b_N(\A)$ such that $s \circ \overline{G}(f_\bullet) = 0$.  Let $I^\bullet$ denote the cocone of $s^\bullet$; we obtain a morphism of triangles in $D^b_N(\A)$:
\begin{eqnarray*}
\begin{tikzcd}
0 \arrow[r] \arrow[d] & \overline{G}(X_\bullet) \arrow[r, "id"] \arrow[d, dashed, "g^\bullet"] & \overline{G}(X_\bullet) \arrow[r] \arrow[d, "\overline{G}(f_\bullet)"] & 0 \arrow[d]\\
\Sigma^{-1}M^\bullet \arrow[r] & I^\bullet \arrow[r, "h^\bullet"] & \overline{G}(Y_\bullet) \arrow[r, "s"] & M^\bullet
\end{tikzcd}
\end{eqnarray*}

Changing the bottom row up to isomorphism, we may assume that $I^\bullet$ is a bounded complex of projective-injectives; by Lemma \ref{morphism simplification} we have that $h^\bullet$ and $g^\bullet$ can be chosen to be morphisms of complexes.  Note that $h^ig^i: X_i \rightarrow Y_i$ defines a morphism in $\MMor_{N-2}(\A)$ whose image under $G$ is $h^\bullet g^\bullet$.  Since $\overline{G}(f_\bullet) = h^\bullet g^\bullet$ in $D^b_N(\A)$, by Proposition \ref{canonical embedding} we have that $f_i = h^i g^i$ for all $1 \le i \le N-1$.

We shall now construct $(I'_\bullet, \iota_\bullet) \in \Proj{\MMor_{N-2}(\A)}$ and a factorization $X_\bullet \xrightarrow{\hat{g}_\bullet} I'_\bullet \xrightarrow{\hat{h}_\bullet} Y_\bullet$ of $f_\bullet$.  Define $I'_i := \bigoplus_{j=1}^i I^j = I'_{i-1} \oplus I^i$, and let $\iota_i: I'_i \hookrightarrow I'_i \oplus I^{i+1}$ be given by $\begin{bmatrix} id \\ d_I^i\pi_i \end{bmatrix}$, where $\pi_i : I_i' \twoheadrightarrow I^i$ is the canonical projection.  It is clear that $(I'_\bullet, \iota_\bullet) \in \Proj{\MMor_{N-2}(\A)}$, since each $I'_i$ is projective-injective and each $\iota_i$ is a (necessarily split) monomorphism.  Define $\hat{h}_\bullet: I'_\bullet \rightarrow Y_\bullet$ by $\hat{h}_i := h^i \pi_i$; it is straightforward to check that $\hat{h}_\bullet$ is a morphism in $\MMor_{N-2}(\A)$.  

We shall inductively construct a family $\hat{g}_i: X_i \rightarrow I'_i$ such that $\pi_i \hat{g}_i = g^i$ for all $1 \le i \le N-1$ and $\iota_{i-1} \hat{g}_{i-1} = \hat{g}_{i}\alpha_{i-1}$ for all $2 \le i \le N-1$.  Let $\hat{g}_1 = g^1$; note that $\pi_1: I'_1 \twoheadrightarrow I^1$ is the identity map, so the desired equation holds.  Next, suppose that $\hat{g}_{i-1}$ has been constructed; by injectivity of $I'_{i-1}$ we may lift $\hat{g}_{i-1}$ to $\phi_i: X_i \rightarrow I'_{i-1}$ such that $\hat{g}_{i-1} = \phi_i \alpha_{i-1}$.  Define $\hat{g}_i: X_i \rightarrow I'_{i-1} \oplus I^i$ to be $\begin{bmatrix} \phi_i \\ g^{i} \end{bmatrix}$; it easy to verify that $\hat{g}_i$ satisfies both of the desired equations.  Thus the morphism $\hat{g}_\bullet: X_\bullet \rightarrow I'_\bullet$ is defined.  Furthermore, we have that $\hat{h}_i \hat{g}_i = h^i\pi_i\hat{g}_i = h^ig^i = f_i$, hence $f_\bullet = \hat{h}_\bullet \hat{g}_\bullet$.  Thus $f_\bullet = 0$ in $\stab{N}{\A}$ and $\overline{G}$ is faithful.
\end{proof}

To prove fullness, we need a better understanding of how to express morphisms in $D^s_N(\A)$.

\begin{lemma}
\label{fullness lemma 1}
Let $(X_\bullet, \alpha_\bullet), (Y_\bullet, \beta_\bullet) \in \MMor_{N-2}(\A)$.  Then the natural map $\Hom_{D^b_N(\A)}(\overline{G}(X_\bullet), \overline{G}(Y_\bullet)) \rightarrow \Hom_{D^s_N(\A)}(\overline{G}(X_\bullet), \overline{G}(Y_\bullet))$ is surjective.  That is, any morphism $\overline{G}(X_\bullet) \rightarrow \overline{G}(Y_\bullet)$ in $D^s_N(\A)$ can be represented by a span of the form
$$\overline{G}(X_\bullet) \xleftarrow{id} \overline{G}(X_\bullet) \xrightarrow{g} \overline{G}(Y_\bullet)$$
where $g$ is a morphism in $D^b_N(\A)$.
\end{lemma}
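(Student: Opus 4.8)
\emph{Proof idea.}
The plan is to use the calculus of (right) fractions for the Verdier quotient $D^s_N(\A)=D^b_N(\A)/D^{perf}_N(\A)$, reduce the statement to the vanishing of a single obstruction, and then kill that obstruction by a degree count exploiting that $\overline{G}(X_\bullet)$ and $\overline{G}(Y_\bullet)$ are concentrated in degrees $1,\dots,N-1$. Given $\phi\colon\overline{G}(X_\bullet)\to\overline{G}(Y_\bullet)$ in $D^s_N(\A)$, I would first write $\phi=a\circ s^{-1}$, where $s\colon M^\bullet\to\overline{G}(X_\bullet)$ is a morphism of $D^b_N(\A)$ with perfect cone $R^\bullet$ and $a\colon M^\bullet\to\overline{G}(Y_\bullet)$. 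Completing $s$ to a distinguished triangle
$\Sigma^{-1}R^\bullet\xrightarrow{v}M^\bullet\xrightarrow{s}\overline{G}(X_\bullet)\xrightarrow{u}R^\bullet$
and applying $\Hom_{D^b_N(\A)}(-,\overline{G}(Y_\bullet))$, one sees that the image of $s^{*}$ is the kernel of $v^{*}$; hence $a=g\circ s$ for some $g\in\Hom_{D^b_N(\A)}(\overline{G}(X_\bullet),\overline{G}(Y_\bullet))$ — in which case $\phi$ is the class of $g$ and we are finished — as soon as $a\circ v=0$, and this holds automatically once $\Hom_{D^b_N(\A)}\bigl(\Sigma^{-1}R^\bullet,\overline{G}(Y_\bullet)\bigr)=0$. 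So everything reduces to producing, for the given $\phi$, a roof $(s,a)$ whose perfect cone $R^\bullet$ satisfies this last vanishing.

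The mechanism for the vanishing is a support argument. A perfect complex is by definition isomorphic in $D^b_N(\A)$ to a bounded complex of projective objects, and since $\Sigma^{-1}$ preserves $D^{perf}_N(\A)$, so is $\Sigma^{-1}R^\bullet$; by Lemma \ref{morphism simplification} morphisms out of a bounded complex of projectives are computed already in $K_N(\A)$. As $\overline{G}(Y_\bullet)=G(Y_\bullet)$ is concentrated in degrees $1,\dots,N-1$, it follows that a perfect complex isomorphic in $D^b_N(\A)$ to a bounded complex of projectives supported in degrees $\le 0$ admits no nonzero morphism to $\overline{G}(Y_\bullet)$: there are no nonzero chain maps for degree reasons, and there are no null-homotopies to quotient by. (Frobenius exactness of $\A$ means these projectives are also injective, which makes the dual bookkeeping symmetric and is used freely as a standing hypothesis.) Hence it suffices to find a roof $(s,a)$ representing $\phi$ with $R^\bullet\cong\Sigma W^\bullet$ in $D^b_N(\A)$ for some bounded complex $W^\bullet$ of projectives in nonpositive degrees. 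To get there I would resolve $M^\bullet$ by a bounded-above complex of projectives $P^\bullet$, so that by Lemma \ref{morphism simplification} both $s$ and $a$ become honest chain maps out of $P^\bullet$; since $\operatorname{cone}(s)$ is perfect it has bounded cohomology, hence is acyclic — and, being built from projectives in the relevant degrees, contractible — in all sufficiently negative degrees, so $P^\bullet$ may be brutally truncated far below. Combining this with a complete-resolution model of $\overline{G}(X_\bullet)$ supplied by Proposition \ref{essential surjectivity on complexes} (an acyclic $P\in C^{ac}_N(\Proj{\A})$ with $F(P)=X_\bullet$) and with the smart truncations $\sigma_{\le n}$, $\sigma_{>n}$, one rewrites the roof so that its cone is an honestly bounded complex of projectives pushed into very negative degrees, which is exactly the $\Sigma W^\bullet$ required.

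I expect the genuinely technical point to be precisely this last construction. The calculus-of-fractions reduction and the support argument are formal, but the truncation/contractibility bookkeeping for $N$-complexes is delicate: one must track the supports of $\sigma_{\le n}P^\bullet$, $\sigma_{>n}P^\bullet$ and of $\Sigma^{\pm 1}$, compare the brutal and smart truncations of the resolving complex, and verify that after truncating the modified left leg $s$ still has a bounded projective cone sitting low enough in degree that $\Sigma^{-1}R^\bullet$ really lands in nonpositive degrees. Once that is in hand, the vanishing of the obstruction and hence the statement follow.
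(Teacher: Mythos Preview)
Your high-level strategy is correct and matches the paper's: represent the morphism as a roof, reduce the lifting problem to the vanishing of $\Hom_{D^b_N(\A)}(\Sigma^{-1}R^\bullet,\overline{G}(Y_\bullet))$ for the perfect cone $R^\bullet$, and then arrange the roof so that this vanishing follows from a support argument (using Lemma~\ref{morphism simplification} to compute the Hom in $K_N$). The paper does exactly this.

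There are two differences worth flagging. First, the paper pushes the perfect cone into \emph{high} degrees rather than nonpositive ones: it extends $\overline{G}(X_\bullet)$ to the right using the acyclic complex $P^\bullet$ from Theorem~\ref{acyclic-stable equivalence}, forming $\hat{X}^\bullet = (\cdots 0 \to X_1 \hookrightarrow \cdots \hookrightarrow X_{N-1} \hookrightarrow P^0 \to P^1 \to \cdots)$, and then takes a sharp truncation $\tau_{\le m}\hat{X}^\bullet$ for $m$ large. The natural map $p^\bullet\colon \tau_{\le m}\hat{X}^\bullet \twoheadrightarrow \overline{G}(X_\bullet)$ then has kernel $J^\bullet \in K^b_N(\Proj{\A})$ concentrated in degrees $N$ through $m$, so $\Hom_{D^b_N}(J^\bullet,\overline{G}(Y_\bullet))=0$ immediately. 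Your direction (degrees $\le 0$) is symmetric in principle, but the right-hand extension is what the complete resolution supplies most directly.

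Second, and more substantively, your construction of the replacement roof is where the real content lies, and your sketch does not quite nail it. You propose to resolve $M^\bullet$ by a bounded-above projective complex and then truncate; but the chain-level cone of the resulting map to $\overline{G}(X_\bullet)$ is not a complex of projectives (it contains the $X_i$), so the truncation-and-contractibility bookkeeping you describe does not obviously produce a bounded projective cone in a controlled degree range. The paper sidesteps this entirely by never resolving $M^\bullet$: it builds the new vertex $\tau_{\le m}\hat{X}^\bullet$ directly from the complete resolution of $X_\bullet$, and the crucial step is showing this new vertex factors through the original roof. This is done by proving $\Hom_{K^b_N(\A)}(\tau_{\le m}\hat{X}^\bullet, I^\bullet)=0$ (where $I^\bullet$ is the perfect cone of the original $s$, realized as a bounded complex of projectives), which follows from acyclicity of $\hat{X}^\bullet$ together with a degree count on the truncation triangle. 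That vanishing yields a morphism of triangles producing $r^\bullet\colon \tau_{\le m}\hat{X}^\bullet \to M^\bullet$ with $s^\bullet r^\bullet = p^\bullet$, and the roof equivalence is then formal. This is the step your proposal gestures at but does not make precise; once you see it, the smart truncations $\sigma_{\le n}$ and the resolution of $M^\bullet$ are both unnecessary.
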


\begin{proof}
Any morphism in $\Hom_{D^s_N(\A)}(\overline{G}(X_\bullet), \overline{G}(Y_\bullet))$ can be represented by a span $\overline{G}(X_\bullet) \xleftarrow{s} M^\bullet \xrightarrow{f} \overline{G}(Y_\bullet)$, where $s$ and $f$ are morphisms in $D^b_N(\A)$ and $s$ fits into a triangle $M^\bullet \xrightarrow{s} \overline{G}(X_\bullet) \xrightarrow{t} I^\bullet \rightarrow \Sigma M^\bullet$ with $I^\bullet \in D^{perf}_N(\A)$.  Changing $I^\bullet$ up to isomorphism in $D^b_N(\A)$, we may assume without loss of generality that $I^\bullet$ is a bounded $N$-complex of projective-injectives.  By Lemma \ref{morphism simplification} we can represent $t$ by a morphism of complexes $t^\bullet$.  Changing $M^\bullet$ up to isomorphism in $D^b_N(\A)$, we can also assume that $M^\bullet$ is the cocone of $t^\bullet$ in $K^b_N(\A)$, hence $M^\bullet \xrightarrow{s^\bullet} \overline{G}(X_\bullet) \xrightarrow{t^\bullet} I^\bullet \rightarrow \Sigma M^\bullet$ is a triangle in $K^b_N(\A)$.  Note that if $I^\bullet = 0$, then $s^\bullet: M^\bullet \xrightarrow{\sim} \overline{G}(X_\bullet)$ is an isomorphism in $K^b_N(\A)$ and we are done; we thus assume that $I^\bullet$ is nonzero.

By Theorem \ref{acyclic-stable equivalence}, there exists an acyclic $N$-complex $P^\bullet$ of projective-injectives such that $X_\bullet = Z^0_\bullet(P^\bullet)$.  Let $\hat{X}^\bullet$ be the $N$-complex
$$\hat{X}^\bullet = 0 \rightarrow X_1 \hookrightarrow X_2 \hookrightarrow \cdots \hookrightarrow X_{N-1} \hookrightarrow P^0 \rightarrow P^1 \rightarrow \cdots$$
where $X_1$ is in degree $1$.  It is straightforward to check that $\hat{X}^\bullet$ is acyclic.  For any integer $m\ge N$, there is a natural morphism of $N$-complexes $p^\bullet: \tau_{\le m}\hat{X}^\bullet \twoheadrightarrow \overline{G}(X_\bullet)$.  We claim that for sufficiently large $m \ge N$, there is a morphism of $N$-complexes $r^\bullet : \tau_{\le m} \hat{X}^\bullet \rightarrow M^\bullet$ satisfying $p^\bullet = s^\bullet r^\bullet$, and an equivalence of morphisms in $D^s_N(\A)$:
\begin{eqnarray*}
\overline{G}(X_\bullet) \xleftarrow{s^\bullet} M^\bullet \xrightarrow{f} \overline{G}(Y_\bullet) 
=
\overline{G}(X_\bullet) \xleftarrow{p^\bullet} \tau_{\le m}\hat{X}^\bullet \xrightarrow{fr^\bullet} \overline{G}(Y_\bullet) 
\end{eqnarray*}

Let $k$ be the maximum integer such that $I^k$ is nonzero, and choose $m \ge max(N, k+N)$.  We have a triangle in $K^+_N(\A)$
$$\tau_{>m}\hat{X}^\bullet \rightarrow \hat{X}^\bullet \rightarrow \tau_{\le m} \hat{X}^\bullet \rightarrow \Sigma\tau_{>m}\hat{X}^\bullet$$
arising from the chain-wise split exact sequence of complexes.  All nonzero terms of $\tau_{>m}\hat{X}^\bullet$ and $\Sigma \tau_{>m}\hat{X}^\bullet$ occur in degrees greater than $k$, hence $\Hom_{K^+_N(\A)}(\tau_{> m}\hat{X}^\bullet, I^\bullet) = 0 = \Hom_{K^+_N(\A)}(\Sigma \tau_{> m}\hat{X}^\bullet, I^\bullet)$.  Since $\hat{X}^\bullet$ is acyclic, $\Hom_{K^+_N(\A)}(\hat{X}^\bullet, I^\bullet) = 0$ by \cite[Lemma 3.3]{iyama2017derived}.  Applying the functor $\Hom_{K^+_N(\A)}(-, I^\bullet)$ to the triangle, we see that $\Hom_{K^b_N(\A)}(\tau_{\le m}\hat{X}^\bullet, I^\bullet) = 0$.
 
The kernel of $p^\bullet$ is $J^\bullet := \tau_{\le m} ((\tau_{\ge 0} P^\bullet)[-N]) \in K^b_N(\Proj{\A})$; the chainwise split exact sequence $\begin{tikzcd}[column sep=small] J^\bullet \arrow[r, hook] & \tau_{\le m} \hat{X}^\bullet \arrow[r, two heads, "p^\bullet"] & \overline{G}(X_\bullet) \end{tikzcd}$ induces a triangle in $K^b_N(\A)$.  Since $\Hom_{K^b_N(\A)}(\tau_{\le m}\hat{X}^\bullet, I^\bullet) = 0$, we obtain a morphism of triangles in $K^b_N(\A)$:
\begin{eqnarray*}
\begin{tikzcd}
J^\bullet \arrow[r] \arrow[d, dashed, "\Sigma^{-1} q^\bullet"] & \tau_{\le m} \hat{X}^\bullet \arrow[r, "p^\bullet"] \arrow[d] & \overline{G}(X_\bullet) \arrow[r] \arrow[d, "t^\bullet"] & \Sigma J^\bullet \arrow[d, dashed, "q^\bullet"]\\
\Sigma^{-1} I^\bullet \arrow[r] & 0 \arrow[r] & I^\bullet \arrow[r, "id"] & I^\bullet
\end{tikzcd}
\end{eqnarray*}
which in turn yields
\begin{eqnarray*}
\begin{tikzcd}
J^\bullet \arrow[r] \arrow[d, "\Sigma^{-1} q^\bullet"] & \tau_{\le m} \hat{X}^\bullet \arrow[r, "p^\bullet"] \arrow[d, dashed, "r^\bullet"] & \overline{G}(X_\bullet) \arrow[r] \arrow[d, "id"] & \Sigma J^\bullet \arrow[d, "q^\bullet"]\\
\Sigma^{-1} I^\bullet \arrow[r] & M^\bullet \arrow[r, "s^\bullet"] &\overline{G}(X_\bullet) \arrow[r, "t^\bullet"] & I^\bullet
\end{tikzcd}
\end{eqnarray*}

Since $s^\bullet$ and $p^\bullet = s^\bullet r^\bullet$ both have perfect cones, it follows from the octahedron axiom that $r^\bullet$ does as well.  The desired equivalence of roofs $f(s^\bullet)^{-1} = (fr^\bullet)(s^\bullet r^\bullet)^{-1} = (fr^\bullet)(p^\bullet)^{-1}$ follows immediately.

Furthermore, since $J^\bullet \in K^b_N(\Proj{\A})$ is concentrated in degrees $N$ through $m$ and $\overline{G}(Y_\bullet)$ is concentrated in degrees $1$ through $N-1$, \linebreak[4] $\Hom_{K^b_N(\A)}(J^\bullet, \overline{G}(Y_\bullet)) = 0 = \Hom_{D^b_N(\A)}(J^\bullet, \overline{G}(Y_\bullet))$.  We obtain a morphism of triangles in $D^b_N(\A)$:
\begin{eqnarray*}
\begin{tikzcd}
J^\bullet \arrow[r] \arrow[d] & \tau_{\le m} \hat{X}^\bullet \arrow[r, "p^\bullet"] \arrow[d, "fr^\bullet"] & \overline{G}(X_\bullet) \arrow[r] \arrow[d, dashed, "g"] & \Sigma J^\bullet \arrow[d]\\
0 \arrow[r] & \overline{G}(Y_\bullet) \arrow[r, "id"] &\overline{G}(Y_\bullet) \arrow[r] & 0
\end{tikzcd}
\end{eqnarray*}
Therefore we have an equivalence of morphisms
\begin{eqnarray*}
\overline{G}(X_\bullet) \xleftarrow{p^\bullet} \tau_{\le m}\hat{X}^\bullet \xrightarrow{fr^\bullet} \overline{G}(Y_\bullet) 
=
\overline{G}(X_\bullet) \xleftarrow{id} \overline{G}(X_\bullet) \xrightarrow{g} \overline{G}(Y_\bullet) 
\end{eqnarray*}
\end{proof}

\begin{corollary}
\label{G full}
$\overline{G}$ is full.
\end{corollary}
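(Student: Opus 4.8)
The plan is to deduce fullness immediately by combining Lemma \ref{fullness lemma 1} with the full faithfulness of the canonical embedding established in Proposition \ref{canonical embedding}; essentially all of the genuine work has already been carried out in Lemma \ref{fullness lemma 1}, and this corollary is bookkeeping. Fix $(X_\bullet, \alpha_\bullet), (Y_\bullet, \beta_\bullet) \in \MMor_{N-2}(\A)$ and let $\xi \in \Hom_{D^s_N(\A)}(\overline{G}(X_\bullet), \overline{G}(Y_\bullet))$ be arbitrary. By Lemma \ref{fullness lemma 1}, $\xi$ is represented by a span of the form $\overline{G}(X_\bullet) \xleftarrow{id} \overline{G}(X_\bullet) \xrightarrow{g} \overline{G}(Y_\bullet)$ with $g$ a morphism in $D^b_N(\A)$; equivalently, $\xi$ is the image of $g$ under the Verdier localization functor $D^b_N(\A) \to D^s_N(\A)$. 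Here we are using the convention of Lemma \ref{fullness lemma 1} that $\overline{G}(X_\bullet)$, $\overline{G}(Y_\bullet)$ denote $G(X_\bullet)$, $G(Y_\bullet)$ regarded as objects of $D^b_N(\A)$ (and of $D^s_N(\A)$), concentrated in degrees $1$ through $N-1$.

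Next I would invoke Proposition \ref{canonical embedding}: since $X_\bullet$ and $Y_\bullet$ lie in $\Mor_{N-2}(\A)$, the composite $\Mor_{N-2}(\A) \xrightarrow{G} C^b_N(\A) \to D^b_N(\A)$ is fully faithful, so there is a unique morphism $f_\bullet : X_\bullet \to Y_\bullet$ in $\Mor_{N-2}(\A)$ with $G(f_\bullet) = g$ in $D^b_N(\A)$. Because $\MMor_{N-2}(\A)$ is a \emph{full} subcategory of $\Mor_{N-2}(\A)$, the map $f_\bullet$ is automatically a morphism in $\MMor_{N-2}(\A)$, and hence represents a morphism in $\stab{N}{\A}$.

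Finally, recall from the construction in Proposition \ref{singularity functor} that $\overline{G}$ is characterized by the property that $\overline{G}$ precomposed with the quotient functor $\MMor_{N-2}(\A) \to \stab{N}{\A}$ equals the composite $G' : \MMor_{N-2}(\A) \xhookrightarrow{G} C^b_N(\A) \to D^b_N(\A) \to D^s_N(\A)$. Therefore $\overline{G}$ applied to the class of $f_\bullet$ in $\stab{N}{\A}$ equals $G'(f_\bullet)$, which is precisely the image of $G(f_\bullet) = g$ under $D^b_N(\A) \to D^s_N(\A)$, namely $\xi$. Since $\xi$ was arbitrary, $\overline{G}$ is full. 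There is no real obstacle at this stage; together with Lemma \ref{G faithful} and the essential surjectivity that follows from Theorem \ref{acyclic-stable equivalence} (every object of $D^s_N(\A)$ is isomorphic to some $\overline{G}(X_\bullet)$ via the acyclic complex $\hat X^\bullet$), this completes the proof of Theorem \ref{stable-singularity equivalence}.
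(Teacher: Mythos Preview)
Your proof is correct and matches the paper's argument essentially line for line: invoke Lemma \ref{fullness lemma 1} to lift to $D^b_N(\A)$, then Proposition \ref{canonical embedding} to lift to $\MMor_{N-2}(\A)$, then unwind the definition of $\overline{G}$. The final aside about essential surjectivity is extraneous to this corollary and not quite how the paper proceeds (essential surjectivity is handled separately by a generating-set argument showing $D^s_N(\A)$ is generated by complexes of the form $\mu^k_i(X)$), so you should drop that sentence.
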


\begin{proof}
Let $X_\bullet, Y_\bullet \in \stab{N}{\A}$, and let $g: \overline{G}(X_\bullet) \rightarrow \overline{G}(Y_\bullet)$ be a morphism in $D^s_N(\A)$.  By Lemma \ref{fullness lemma 1}, $g$ can be taken to be a morphism in $D^b_N(\A)$, and by Proposition \ref{canonical embedding}, $g = G(f_\bullet)$ for some $f_\bullet: X_\bullet \rightarrow Y_\bullet$ in $\MMor_{N-2}(\A)$.  Let $\overline{f}_\bullet$ denote the image of $f_\bullet$ in $\stab{N}{\A}$.  By the construction of $\overline{G}$, $\overline{G}(\overline{f}_\bullet) = G(f_\bullet) = g$.  Thus $\overline{G}$ is full. 
\end{proof}

It remains to show that $\overline{G}$ is essentially surjective.  Recall the objects $\chi_i(X)_\bullet \in \MMor_{N-2}(\A)$ of Definition \ref{fundamental objects}.  We shall also use the formula in \cite[Lemma 2.6]{iyama2017derived} describing the action of $\Sigma$ on the complexes $\mu^s_r(X)$ in the homotopy category.

\begin{lemma}
$\overline{G}$ is essentially surjective, hence an equivalence of triangulated categories.
\end{lemma}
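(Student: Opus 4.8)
The plan is to prove that $\overline{G}$ is essentially surjective; combined with Proposition \ref{singularity functor} (which makes $\overline{G}$ triangulated), Corollary \ref{G full}, and Lemma \ref{G faithful}, this exhibits $\overline{G}$ as an equivalence of triangulated categories. The point I would exploit is that, since $\overline{G}$ is full, faithful, and triangulated, its essential image $\mathcal{I}\subseteq D^s_N(\A)$ is automatically a triangulated subcategory: it is replete, it is closed under $\Sigma^{\pm1}$ because $\Sigma^{\pm1}\overline{G}\cong\overline{G}\,\Omega^{\mp1}$, and it is closed under cones because any morphism between objects of $\mathcal{I}$ is $\overline{G}$ of a morphism in $\stab{N}{\A}$, whose cone there is carried by $\overline{G}$ into $\mathcal{I}$. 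So it is enough to show that $\mathcal{I}$ contains a triangulated generating set of $D^s_N(\A)$. I would take as generators the objects $M[k]$ for $M\in\A$ and $k\in\Z$: these generate $D^s_N(\A)$, since any bounded $N$-complex is built from its shifted terms $X^i[-i]$ by finitely many cones, via the chainwise split short exact sequences $\tau_{>n}X^\bullet \rightarrowtail X^\bullet \twoheadrightarrow \tau_{\le n}X^\bullet$ and the triangles they induce in $D^b_N(\A)$.

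To put the $M[k]$ into $\mathcal{I}$, I would first note that for $1\le i\le N-1$ and $M\in\A$ the object $\chi_i(M)_\bullet$ of Definition \ref{fundamental objects} lies in $\MMor_{N-2}(\A)$ (every monomorphism in the abelian category $\A$ is admissible), and that $G(\chi_i(M)_\bullet)$ is exactly the complex $\mu^{N-1}_{N-i}(M)$ placed in degrees $1$ through $N-1$; hence $\mu^{N-1}_r(M)\in\mathcal{I}$ for $1\le r\le N-1$. Next I would use the evident chainwise split short exact sequences $\mu^s_1(M)\rightarrowtail\mu^s_r(M)\twoheadrightarrow\mu^{s-1}_{r-1}(M)$ (splitting off the top copy of $M$) for $2\le r\le N$, together with the fact that $\mu^s_N(M)$—being projective–injective in $C^b_N(\A)$, and readily seen to be acyclic—is zero in $D^s_N(\A)$. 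Running these triangles in $\mathcal{I}$ and inducting downward on $s$ starting from $s=N-1$ propagates $\mu^s_r(M)\in\mathcal{I}$ to all $s\le N-1$ and all $1\le r\le N-1$: in the step from $s$ to $s-1$, the sequences with parameters $(s,r'+1)$ handle $1\le r'\le N-2$, while $r'=N-1$ is obtained from $\mu^s_1(M)\rightarrowtail\mu^s_N(M)\twoheadrightarrow\mu^{s-1}_{N-1}(M)$, whose middle term vanishes. (Alternatively, one can read off this step from the description of $\Sigma$ on the complexes $\mu^s_r(M)$ in \cite[Lemma 2.6]{iyama2017derived}.) In particular $M[-s]=\mu^s_1(M)\in\mathcal{I}$ for all $s\le N-1$, i.e. $M[k]\in\mathcal{I}$ whenever $k\ge 1-N$. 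Finally, the relation $\Sigma^2\cong[N]$ of \cite[Theorem 2.4]{iyama2017derived} descends to $D^s_N(\A)$, so $\mathcal{I}$, being $\Sigma^{\pm1}$-closed, is closed under $[\pm N]$; applying $[-N]$ repeatedly extends membership to $M[k]\in\mathcal{I}$ for all $k\in\Z$. Then $\mathcal{I}$ contains the generating set, so $\mathcal{I}=D^s_N(\A)$ and $\overline{G}$ is essentially surjective.

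The only genuine idea here is the reduction of essential surjectivity to hitting the objects $M[k]$; everything else is index bookkeeping with the $\mu$-complexes. The point deserving a line of care is that $\Sigma^2\cong[N]$ and the $\Sigma$-action on the $\mu$'s really do pass to the Verdier quotient $D^s_N(\A)$—immediate, since $\Sigma$, the shift $[1]$, acyclicity, and the property of being a bounded complex of projectives are all compatible with the localizations defining $D^b_N(\A)$ and $D^s_N(\A)$—together with the related point that the homotopy-category isomorphisms among the $\mu^s_r(M)$ arise from honest chainwise split short exact sequences of $N$-complexes, hence descend to genuine triangles.
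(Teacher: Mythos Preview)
Your proof is correct and follows essentially the same strategy as the paper: both observe that the essential image of $\overline{G}$ is a triangulated subcategory, show that the complexes $\mu^k_i(M)$ (or just $M[k]=\mu^{-k}_1(M)$) generate $D^s_N(\A)$ via sharp truncation, identify $\mu^{N-1}_r(M)$ with $\overline{G}(\chi_{N-r}(M)_\bullet)$, and then propagate using triangles among the $\mu$'s together with $\Sigma^2\cong[N]$. The only difference is cosmetic bookkeeping: the paper uses the sequences $\mu^{N-1}_{N-1-k}\hookrightarrow\mu^{N-1}_{N-1-k+i}\twoheadrightarrow\mu^k_i$ to reach all $\mu^k_i$ concentrated in degrees $1,\dots,N-1$ in one step and then invokes a single $\Sigma^{\pm1}$ and powers of $\Sigma^2$, whereas you run a downward induction on $s$ using $\mu^s_1\rightarrowtail\mu^s_r\twoheadrightarrow\mu^{s-1}_{r-1}$ and the vanishing of $\mu^s_N$.
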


\begin{proof}
By Proposition \ref{singularity functor}, Lemma \ref{G faithful} and Corollary \ref{G full}, $\overline{G}$ is a fully faithful functor of triangulated categories, hence its essential image $Im(\overline{G})$ is a triangulated subcategory of $D^s_N(\A)$.

Let $\mathcal{S} = \{\mu^k_i(X) \mid k \in \Z, 1 \le i \le N-1, X \in \A\}$, and let $\mathcal{T}$ denote the smallest triangulated subcategory of $D^s_N(\A)$ containing $\mathcal{S}$.  We claim that $\mathcal{T} = D^s_N(\A)$.  Suppose for a contradiction that $X^\bullet \in D^s_N(\A)$ is a bounded $N$-complex of minimum possible length such that $X^\bullet \notin \mathcal{T}$.  Clearly $X^\bullet \neq 0$; suppose $m$ is the largest integer such that $X^m \neq 0$.  Then we have a natural short exact sequence of complexes which induces a triangle $\mu^m_1(X^m) \rightarrow X^\bullet \rightarrow \tau_{< m}X^\bullet \rightarrow \Sigma \mu^m_1(X^m)$ in $D^s_N(\A)$.  Note that $\mu^m_1(X^m) \in \mathcal{T}$ by definition and $\tau_{<m}X^\bullet \in \mathcal{T}$ since it has length less than $X^\bullet$.  It follows that $X^\bullet \in \mathcal{T}$, a contradiction.  Thus $\mathcal{T} = D^s_N(\A)$.  

We claim $\mathcal{S}$ is contained in $Im(\overline{G})$; once this is proved, it follows immediately that $Im(\overline{G}) = \mathcal{T} = D^s_N(\A)$, hence $\overline{G}$ is an equivalence.

We first show that $\mathcal{S}' = \{\mu^k_i(X) \mid 1 \le i \le k \le N-1, X \in \A\}$, consisting of all elements of $\mathcal{S}$ which are concentrated in degrees $1$ through $N-1$, is contained in $Im(\overline{G})$.  Fix $X \in \A$.  It is immediate that $\mu^{N-1}_i(X) = \overline{G}(\chi_i(X))$ for each $1 \le i \le N-1$.  For $1 \le i \le k \le N-1$, we have a short exact sequence of $N$-complexes $\mu^{N-1}_{N-1-k}(X) \hookrightarrow \mu^{N-1}_{N-1-k+i}(X) \twoheadrightarrow \mu^{k}_i(X)$ which induces a triangle in $D^s_N(\A)$.  Since the first two members of this triangle lie in $Im(\overline{G})$, so does $\mu^k_i(X)$.  Thus $\mathcal{S}' \subseteq Im(\overline{G})$.

For any $\mu^k_i(X) \in \mathcal{S}$, there is a unique $x \in \Z$ such $k = xN + r$, where $0 \le r < N$.  Then $\Sigma^{2x} \mu^k_i(X) \cong \mu^k_i(X)[xN] = \mu^r_i(X)$.  If $i \le r$, then $\mu^r_i(X) \in \mathcal{S}'$.  Otherwise, $0 \le r < i$, hence $\Sigma^{-1}(\mu^r_i(X)) = \mu^{N-(i-r)}_{N-i}(X) \in \mathcal{S}'$.  In either case, $\Sigma^y \mu^k_i(X) \in Im(\overline{G})$ for some value of $y$, hence $\mu^k_i(X) \in Im(\overline{G})$.  Thus $\mathcal{S} \subseteq Im(\overline{G})$, hence $\overline{G}$ is essentially surjective.
\end{proof}


\section{Calabi-Yau Properties of $\stab{N}{\rmod{A}}$}
\label{Calabi-Yau Properties}

In this section we let $A$ be an associative algebra over a field $F$.  We shall assume that $A$ is finite-dimensional and self-injective.  Fix an integer $N \ge 2$.  Under these hypotheses, the category $\rmod{A}$ is Frobenius exact, hence $\stab{N}{\rmod{A}}$ (hereafter abbreviated as $\stab{N}{A}$) is a triangulated category by Theorem \ref{frobenius}.

It is known that $\stab{N}{A}$ possesses a Serre functor.  (See \cite{ringel2008auslander} for case $N=3$ and \cite{xiong2014auslander} for general $N$.)  The goal of this section is to obtain a sufficient condition for $\stab{N}{A}$ to be fractionally Calabi-Yau.  In order to obtain a useful description of the Serre functor on $\stab{N}{A}$, we must first introduce several other functors.

\subsection{The Minimal Monomorphism Functor}

The \textbf{minimal monomorphism} construction was introduced in \cite{ringel2008auslander} for $N=3$ and \cite{zhang2011monomorphism} for general $N$.  To simplify notation in this section, we shall let $k = N-2$.

\begin{definition}
Let $(X_\bullet, \alpha_\bullet) \in \Mor_{k}(A)$.  Define $(\Mimo_\bullet(X), m_\bullet(X)) \in \MMor_{k}(A)$ as follows.  For $1 \le i \le k$, let $ker(\alpha_i) \hookrightarrow J_{i+1}(X)$ denote the injective hull of $ker(\alpha_i)$, and choose a lift $\omega_i: X_i \rightarrow J_{i+1}(X)$ of this map.  Let $J_1(X) = 0$.  For $1 \le i \le k+1$, let $I_i(X) := \bigoplus_{j=1}^{i}J_j(X)$, so that $I_1(X) = 0$ and $I_i(X) = J_i(X) \oplus I_{i-1}(X)$.  Define $\Mimo_i(X) := X_i \oplus I_i(X)$ and let $m_i(X): \Mimo_i(X) \rightarrow \Mimo_{i+1}(X)$ be given by $$m_i(X) := \begin{bmatrix} \alpha_i & 0\\ \omega_i & 0  \\ 0 & 1\end{bmatrix}:  X_i \oplus I_i(X) \hookrightarrow X_{i+1} \oplus J_{i+1}(X) \oplus I_i(X)$$

Given $f_\bullet: X_\bullet \rightarrow Y_\bullet$, define $\Mimo_\bullet(f):  \Mimo_\bullet(X) \rightarrow \Mimo_\bullet(Y)$ inductively as follows.  Define $\Mimo_1(f) := f_1: X_1 \rightarrow Y_1$.  Suppose that we have defined $\Mimo_{i-1}(f): X_{i-1} \oplus I_{i-1}(X) \rightarrow Y_{i-1} \oplus I_{i-1}(Y)$ to be of the form $\begin{bmatrix} f_{i-1} & 0 \\ \phi_{i-1} & \psi_{i-1} \end{bmatrix}$.  Define $\begin{bmatrix} \phi_i & \psi_i \end{bmatrix}: X_i \oplus I_i(X) \rightarrow I_i(Y)$ to be a lift of the map
\begin{eqnarray*}
\begin{tikzcd}
X_{i-1} \oplus I_{i-1}(X) \arrow[r, "\Mimo_{i-1}(f)"] &[10pt] Y_{i-1} \oplus I_{i-1}(Y) \arrow[r, hook, "m_{i-1}(Y)"] & Y_i \oplus I_i(Y) \arrow[r, two heads] &[-10pt] I_i(Y)
\end{tikzcd}
\end{eqnarray*}
 along the injection $m_{i-1}(X): X_{i-1} \oplus I_{i-1}(X) \hookrightarrow X_i \oplus I_i(X)$.  Then define $\Mimo_i(f): X_i \oplus I_i(X) \rightarrow Y_i \oplus I_i(Y)$ by the matrix $\begin{bmatrix} f_i & 0 \\ \phi_i & \psi_i \end{bmatrix}$.
\end{definition}

In the above definition, it is clear that each $m_i(X)$ is a monomorphism, and that the map $\Mimo_\bullet(f)$ is a morphism in $\MMor_k(A)$.  Note also that we have a morphism $\Mimo_\bullet(X) \twoheadrightarrow X_\bullet$ given by component-wise projection onto $X_\bullet$.  We now state some basic properties of this construction.

\begin{proposition}
\label{Mimo properties}
1)  For any object $X_\bullet \in \Mor_k(A)$, $\Mimo_\bullet(X)$ is independent, up to isomorphism in $\MMor_k(A)$, of the choice of the maps $\omega_i$.
\\
2)  For any morphism $f_\bullet: X_\bullet \rightarrow Y_\bullet$ in $\Mor_k(A)$, the image of $\Mimo_\bullet(f)$ in $\stab{N}{A}$ is independent of the choice of maps $\phi_i$ and $\psi_i$.
\\
3)  $\Mimo$ acts as the identity on both objects and morphisms in $\stab{N}{A}$.
\\
4)  $\Mimo$ defines a functor $\Mor_k(A) \rightarrow \stab{N}{A}$ which descends to functors $\underline{\Mor_k}(A) \rightarrow \stab{N}{A}$ and $\overline{\Mor_k}(A) \rightarrow \stab{N}{A}$.
\\
5)  $\Mimo:  \underline{\Mor_k}(A) \rightarrow \stab{N}{A}$ is right adjoint to the inclusion functor.
\end{proposition}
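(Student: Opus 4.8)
The plan is to obtain items 1--4 as formal consequences of the structure of the construction, and then to prove the adjunction of item 5 directly on Hom-sets, which is where the substance lies. Three facts will be used repeatedly. First, each $m_i(X)$ is a genuine monomorphism in $\rmod{A}$, since its kernel is supported on $\ker\alpha_i\cap\ker\omega_i$, which is $0$ because $\omega_i$ restricts to the injective-hull embedding on $\ker\alpha_i$; the same computation shows $(\alpha_i,\omega_i)\colon X_i\to X_{i+1}\oplus J_{i+1}(X)$ is monic. Second, the canonical projection $\pi_X\colon\Mimo_\bullet(X)\twoheadrightarrow X_\bullet$ has kernel $I_\bullet(X)$, whose components $I_i(X)=\bigoplus_{j=1}^i J_j(X)$ are injective (hence projective-injective, as $A$ is self-injective) and whose structural maps are split inclusions, so Proposition \ref{inj/proj characterization} shows $I_\bullet(X)$ is projective-injective in $\MMor_k(A)$. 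Third, by the remark after Proposition \ref{basic inj/proj characterization} the projective-injective objects of $\MMor_k(A)$ are exactly the projective objects of $\Mor_k(A)$, and by Proposition \ref{basic inj/proj characterization} these already lie in $\MMor_k(A)$.

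Item 3 is immediate: if $X_\bullet\in\MMor_k(A)$ every $\ker\alpha_i$ vanishes, so $I_\bullet(X)=0$ and $\Mimo$ is literally the identity on such objects and on morphisms between them. For item 2, two choices of the lift data $(\phi_i,\psi_i)$ produce morphisms with the same $X$-slot entries, so their difference is a morphism $\Mimo_\bullet(X)\to\Mimo_\bullet(Y)$ landing componentwise in the split subobject $I_\bullet(Y)$; it therefore factors through $I_\bullet(Y)$, which is projective-injective, so the two representatives agree in $\stab{N}{A}$. Item 4 then splits into: \emph{(a) functoriality} --- the matrix product $\Mimo_\bullet(g)\Mimo_\bullet(f)$ again has the inductive block form $\left[\begin{smallmatrix}(gf)_i&0\\ \ast&\ast\end{smallmatrix}\right]$, and projecting the relation ``$\Mimo_\bullet(g)\Mimo_\bullet(f)$ is a morphism in $\MMor_k(A)$'' onto the $I_\bullet$-components shows the lower row is admissible lift data for $g\circ f$, so by item 2 the product represents $\Mimo_\bullet(g\circ f)$ in $\stab{N}{A}$, while $\Mimo_\bullet(\mathrm{id})=\mathrm{id}$ for the evident choices; \emph{(b) descent to $\underline{\Mor_k}(A)$} --- if $f_\bullet$ factors through a projective $P_\bullet$ of $\Mor_k(A)$, then $P_\bullet\in\MMor_k(A)$ is projective-injective there and $\Mimo_\bullet(P)=P_\bullet$, so $\Mimo_\bullet(f)$ factors through a projective-injective and vanishes in $\stab{N}{A}$; \emph{(c) descent to $\overline{\Mor_k}(A)$} --- every injective of $\Mor_k(A)$ is a finite sum of diagrams $I\xrightarrow{\mathrm{id}}\cdots\xrightarrow{\mathrm{id}}I\to 0\to\cdots$ with $I$ injective, and a direct computation identifies $\Mimo_\bullet$ of such a diagram with the constant diagram $\chi_1(I)_\bullet$ on $I$, which is projective-injective in $\MMor_k(A)$; since $\Mimo$ respects finite sums, the same factorization argument applies.

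For item 1, two lift systems $\omega$, $\omega'$ differ by maps $\delta_i=\omega_i'-\omega_i\colon X_i\to J_{i+1}(X)$ vanishing on $\ker\alpha_i$, and by composing one-index changes it suffices to treat $\delta_j=0$ for $j\neq i_0$. Picking $\epsilon\colon X_{i_0+1}\to J_{i_0+1}(X)$ with $\epsilon\alpha_{i_0}=\delta_{i_0}$ (possible since $\delta_{i_0}$ kills $\ker\alpha_{i_0}$ and $J_{i_0+1}(X)$ is injective), I construct an isomorphism $\Mimo_\bullet(X)_\omega\xrightarrow{\sim}\Mimo_\bullet(X)_{\omega'}$ which is the identity in degrees $\leq i_0$ and, in each higher degree, the identity plus a square-zero term with image in the $J_{i_0+1}(X)$-summand; each such term is produced by extending a map into $J_{i_0+1}(X)$ along the monomorphism $(\alpha_j,\omega_j)$ and invoking injectivity of $J_{i_0+1}(X)$. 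This step is bookkeeping-heavy but entirely formal.

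Item 5 is the heart of the matter. For $Y_\bullet\in\MMor_k(A)$ and $Z_\bullet\in\Mor_k(A)$, composition with $\pi_Z$ gives a map
$$\Phi\colon\Hom_{\stab{N}{A}}\bigl(Y_\bullet,\Mimo_\bullet(Z)\bigr)\longrightarrow\Hom_{\underline{\Mor_k}(A)}(Y_\bullet,Z_\bullet),$$
well-defined because projective-injectives of $\MMor_k(A)$ are projectives of $\Mor_k(A)$. To see $\Phi$ is surjective, lift any $h\colon Y_\bullet\to Z_\bullet$ through $\pi_Z$ degree by degree: writing $\tilde h_i=\left[\begin{smallmatrix}h_i\\ \rho_i\end{smallmatrix}\right]$ with $\rho_1=0$, compatibility with $m_i(Z)$ requires extending $\omega_i h_i$ and $\rho_i$ along $\beta_i$, which is possible precisely because $\beta_i$ is a monomorphism --- the single use of the hypothesis $Y_\bullet\in\MMor_k(A)$ --- and $J_{i+1}(Z)$, $I_i(Z)$ are injective; this yields a morphism $\tilde h$ in $\MMor_k(A)$ with $\pi_Z\tilde h=h$. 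To see $\Phi$ is injective, suppose $\Phi[g]=0$, so $\pi_Z g=vu$ with $u,v$ factoring through a projective $P_\bullet$ of $\Mor_k(A)$; lift $v$ through $\pi_Z$ to $\tilde v$ by projectivity of $P_\bullet$, and note $g-\tilde v u$ is killed by $\pi_Z$, hence factors through $\ker\pi_Z=I_\bullet(Z)$; as $P_\bullet$ and $I_\bullet(Z)$ are projective-injective in $\MMor_k(A)$, $g=0$ in $\stab{N}{A}$. Naturality in $Y_\bullet$ and $Z_\bullet$ is routine once one checks that $\pi$ is a natural transformation $\Mimo\Rightarrow\mathrm{id}$ (clear from the block form of $\Mimo_\bullet(f)$), and the resulting bijections exhibit $\Mimo$ as right adjoint to the inclusion. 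I expect the construction of the lift $\tilde h$ --- the step that actually forces the monomorphism condition on the source --- to be the main obstacle.
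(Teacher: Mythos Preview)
Your proposal is correct. Items 2--4 match the paper's proof essentially verbatim. The genuine differences lie in items 1 and 5.

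For item 1, the paper does not construct an explicit isomorphism; it simply cites \cite[Lemma 2.3]{zhang2011monomorphism}, which identifies the projection $\Mimo_\bullet(X)\twoheadrightarrow X_\bullet$ as a right minimal $\MMor_k(A)$-approximation of $X_\bullet$, whence uniqueness up to isomorphism is automatic. Your hands-on reduction to changing one $\omega_{i_0}$ and propagating a unipotent isomorphism through the higher degrees is self-contained and works, but is bookkeeping the paper avoids entirely by appeal to the approximation property.

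For item 5, your Hom-set argument is correct but considerably longer than necessary. The paper instead verifies the unit/counit triangle identities directly: the counit is the projection $\epsilon_{X_\bullet}\colon\Mimo_\bullet(X)\to X_\bullet$, and the unit $\eta_{Y_\bullet}\colon Y_\bullet\to\Mimo_\bullet(Y)$ is the \emph{identity}, because by item 3 one already has $\Mimo_\bullet(Y)=Y_\bullet$ for $Y_\bullet\in\MMor_k(A)$. With these choices, one triangle identity is literally $(\epsilon\iota)\circ(\iota\eta)=\mathrm{id}$, and the other reduces to showing that $\Mimo_\bullet(\epsilon_X)\colon\Mimo_\bullet(X)\to\Mimo_\bullet(X)$ can be taken to be the identity, which is a valid choice of lift data. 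So the step you flag as the main obstacle---constructing the lift $\tilde h$ along the monomorphisms $\beta_i$---never actually arises in the paper's argument; item 3 absorbs it. Your approach buys a direct verification without relying on item 3 for the adjunction, at the cost of the extra lifting work.
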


\begin{proof}
1)  It is proved in \cite[Lemma 2.3]{zhang2011monomorphism} that the projection $\Mimo_\bullet(X) \twoheadrightarrow X_\bullet$ is a right minimal approximation of $X_\bullet$ in $\MMor_k(A)$, hence is unique up to isomorphism in $\MMor_k(A)$.  In particular, any two choices of the maps $\omega_i$ in the construction of $\Mimo_\bullet(X_\bullet)$ yield isomorphic objects.
\\\\
2)  Given $f_\bullet: X_\bullet \rightarrow Y_\bullet$ and two different choices in the construction of $\Mimo_\bullet(f)$, it is easy to check that their difference factors through the projective-injective object $I_1(Y) \hookrightarrow I_2(Y) \hookrightarrow \cdots \hookrightarrow I_{k+1}(Y)$.
\\\\
3)  If $X_\bullet \in \MMor_k(A)$, then $ker(\alpha_i) = 0$ for all $i$.  Thus $I_i(X) = 0$ and $Mimo_\bullet(X) = X_\bullet$.  The statement about morphisms is immediate.
\\\\
4)  The first statement is easily verified.  For the second statement, note that by Propositions \ref{inj/proj characterization} and \ref{basic inj/proj characterization} the projective objects of $\Mor_k(A)$ are precisely the projective-injective objects of $\MMor_k(A)$, hence are preserved by $\Mimo$.  Thus the functor $\Mimo: \Mor_k(A) \rightarrow \stab{N}{A}$ kills projectives and so descends to $\underline{\Mor_k}(A)$.  Similarly, the injective objects in $\Mor_k(A)$ are component-wise projective-injective with all maps split epimorphisms; such objects are mapped to projective-injective objects by $\Mimo$, hence $\Mimo$ also descends to $\overline{\Mor_k}(A)$.
\\\\
5)  Let $\iota: \stab{N}{A} \hookrightarrow \underline{\Mor_k}(A)$ denote the inclusion functor.  Let $X_\bullet \in \underline{\Mor_k}(A), Y_\bullet \in \stab{N}{A}$.  Define natural transformations
\begin{align*}
\epsilon: \iota \circ \Mimo \rightarrow 1_{\underline{\Mor_k}(A)} & & \eta: 1_{\stab{N}{A}} \rightarrow \Mimo \circ \iota
\end{align*}
as follows.  Let $\epsilon_{X_\bullet}: \Mimo_\bullet(X) \rightarrow X_\bullet$ be the component-wise projection onto $X_\bullet$, and let $\eta_{Y_\bullet}: Y_\bullet \rightarrow \Mimo_\bullet(Y) = Y_\bullet$ be the identity map.  It follows immediately from definitions that $\epsilon$ and $\eta$ are indeed natural transformations; it remains to verify that they satisfy the triangle identities.

That $(\epsilon \iota) \circ (\iota \eta) = id_\iota$ is immediate.  To see that $(\Mimo\epsilon)\circ(\eta\Mimo) = id_{\Mimo}$, evaluate at $X_\bullet$ and note that the left-hand side simplifies to $\Mimo_\bullet(\epsilon_X): \Mimo_\bullet(X) \rightarrow \Mimo_\bullet(X)$.  We can choose this map to be the identity map.  Thus the pair $(\iota, \Mimo)$ is an adjunction.
\end{proof}

\subsection{Cokernel and Rotation Functors}

Throughout this section, we shall let $k = N-2$ to simplify notation.

\begin{definition}
For $(X_\bullet, \alpha_\bullet) \in \MMor_k(A)$, define
$$\Cok_\bullet(X) := X_{k+1} \twoheadrightarrow coker(\alpha^{k}_1) \twoheadrightarrow coker(\alpha^{k-1}_2) \twoheadrightarrow \cdots \twoheadrightarrow coker(\alpha_{k})$$
For $f_\bullet: X_\bullet \rightarrow Y_\bullet$, let $\Cok_\bullet(f): \Cok_\bullet(X) \rightarrow \Cok_\bullet(Y)$ be given by the component-wise induced maps on the cokernels.
\end{definition}

It is clear that $\Cok$ defines a functor $\MMor_k(A) \rightarrow \Mor_k(A)$ which sends projective-injective objects to injective objects.  Thus $\Cok$ descends to a functor $\stab{N}{A} \rightarrow \overline{\Mor_k}(A)$.  Though we shall not need this fact, we note that $\Cok$ also defines an exact equivalence between $\MMor_k(A)$ and $\EMor_k(A)$ which descends to a triangulated equivalence between the respective stable categories.

\begin{definition}
Define the \textbf{rotation functor} to be the composition $$R = \Mimo \circ \Cok: \stab{N}{A} \rightarrow \overline{\Mor_k}(A) \rightarrow \stab{N}{A}$$
\end{definition}

The rotation construction was first defined in \cite{ringel2008auslander} for $N =3$ and later generalized to arbitrary $N$ in \cite{xiong2014auslander}.  Our formulation differs slightly in that it is defined on $\stab{N}{A}$ rather than $\Mor_{N-2}(\stab{}{A})$.  On $\stab{N}{A}$, the rotation functor can be somewhat difficult to work with, but it simplifies considerably when expressed in terms of complexes.

Recall the triangulated equivalence $\overline{G}: \stab{N}{A} \rightarrow D^s_{N}(A)$ defined in Proposition \ref{singularity functor}.  Note that $\overline{G}$ extends to a functor $\overline{\Mor_k}(A) \rightarrow D^s_N(A)$.

\begin{proposition}
\label{rotation compatibility}
There is an isomorphism $\Sigma[-1] \circ \overline{G} \cong \overline{G} \circ R$ of functors $\stab{N}{A} \rightarrow D^s_N(A)$.
\end{proposition}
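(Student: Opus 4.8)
The plan is to trace the constituent functors $\Mimo$, $\Cok$, and $\overline{G}$ through the chain of equivalences and reduce the claim to a statement about $N$-complexes, where $\Sigma$ and $[-1]$ have explicit descriptions. First I would observe that, since $\overline{G}$ is an equivalence and $\Mimo$ acts as the identity on $\stab{N}{A}$ (Proposition \ref{Mimo properties}, part 3), it suffices to understand the composite $\overline{G} \circ \Cok$ as a functor from $\stab{N}{A}$ to $D^s_N(A)$ and compare it with $\Sigma[-1]\circ \overline{G}$. Concretely, for $(X_\bullet, \alpha_\bullet) \in \MMor_{N-2}(A)$, I would use Theorem \ref{acyclic-stable equivalence} to write $X_\bullet = Z^0_\bullet(P^\bullet)$ for an acyclic $N$-complex $P^\bullet$ of projective-injectives, so that $\overline{G}(X_\bullet)$ is represented in $D^s_N(A)$ by the $N$-complex $0 \to X_1 \to \cdots \to X_{N-1} \to 0$ concentrated in degrees $1$ through $N-1$, which is the sharp truncation $\tau_{\le N-1}\hat X^\bullet$ of the acyclic complex $\hat X^\bullet$ built from $P^\bullet$ as in the proof of Lemma \ref{fullness lemma 1}.

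Next I would compute $\Cok_\bullet(X)$ explicitly in terms of $P^\bullet$. Since $X_i = Z^0_i(P^\bullet) = \ker(d_P^{0,i})$ and the maps $\alpha_i$ are the kernel inclusions, the cokernels $\mathrm{coker}(\alpha^{k+1-i}_i)$ appearing in the definition of $\Cok$ are precisely the boundary/cycle objects one degree up: using acyclicity of $\hat X^\bullet$ (equivalently of $P^\bullet$), $\mathrm{coker}(\alpha^{k+1-i}_i) \cong Z^{1}_{?}(\ldots)$, and more to the point the epimorphism complex $\Cok_\bullet(X)$ is naturally identified, after applying $\Mimo$ and passing to $D^s_N$, with a shift of $\hat X^\bullet$. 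The cleanest route is: the chainwise split exact sequence $0 \to X_\bullet \to (\text{proj-inj}) \to \Cok_\bullet(X)' \to 0$ obtained by embedding $X_i \hookrightarrow P^{?}$ and taking cokernels realizes $\Cok$ as (a model of) $\Omega^{-1}$ composed with an internal shift of the diagram; translated through $\overline{G}$ and using that $\overline{G}\Omega^{-1} \cong \Sigma \overline{G}$ (Proposition \ref{singularity functor}), the shift in the diagram position corresponds exactly to the complex shift $[-1]$ in $D^s_N(A)$. Assembling, $\overline{G}\circ R = \overline{G}\circ\Mimo\circ\Cok \cong \overline{G}\circ\Cok \cong \Sigma\circ[-1]\circ\overline{G}$, with naturality in $X_\bullet$ following from functoriality of each of $\Mimo$, $\Cok$, and the truncation/embedding constructions.

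The main obstacle I anticipate is bookkeeping the degree shifts correctly: one must check that the single ``rotation'' of the diagram $X_1 \to \cdots \to X_{k+1}$ into $X_{k+1} \to \mathrm{coker}(\alpha^k_1) \to \cdots \to \mathrm{coker}(\alpha_k)$ corresponds under $G$ to moving the window of nonzero terms of the $N$-complex from degrees $1,\dots,N-1$ to degrees $2,\dots,N$ (this is the $[-1]$) together with the ``wrap-around'' contribution in degree $N \equiv 0$ that is exactly the suspension $\Sigma$ coming from $\Omega^{-1}$. Making this precise requires carefully comparing the admissible short exact sequence $X_\bullet \rightarrowtail I_{X\bullet} \twoheadrightarrow \Omega^{-1}X_\bullet$ in $\MMor_{N-2}(A)$ used to define $\Sigma$ in $\stab{N}{A}$ with the sharp-truncation short exact sequence of $N$-complexes used to define $[-1]$, and verifying the induced natural isomorphism is compatible with the triangulated structures; I would handle this by choosing the injective hull data in $\Mimo$ compatibly with the chosen $P^\bullet$, so that all the maps in sight are literally the differentials of $\hat X^\bullet$, reducing the comparison to an identity of complexes rather than a zig-zag of quasi-isomorphisms.
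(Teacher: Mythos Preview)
Your overall decomposition---first show $\overline{G}\circ\Mimo\circ\Cok \cong \overline{G}\circ\Cok$, then show $\overline{G}\circ\Cok \cong \Sigma[-1]\circ\overline{G}$---is exactly the paper's structure. However, there are two points where your argument diverges from the paper, one a small gap and one a genuine difference in method.

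\textbf{On dropping $\Mimo$.} Your justification via Proposition~\ref{Mimo properties}(3) does not quite apply: that statement says $\Mimo$ is the identity on objects already in $\stab{N}{A}$, but $\Cok_\bullet(X)$ lies in $\overline{\Mor_k}(A)$ (or $\EMor_k(A)$), not in $\MMor_k(A)$. What you actually need is that $\overline{G}(\Mimo_\bullet(\Cok(X)))\cong\overline{G}(\Cok_\bullet(X))$ in $D^s_N(A)$. The paper handles this directly with the short exact sequence $I_\bullet(\Cok(X)) \rightarrowtail \Mimo_\bullet(\Cok(X)) \twoheadrightarrow \Cok_\bullet(X)$ in $\Mor_k(A)$: applying $\overline{G}$ gives a triangle whose first term is perfect, hence zero in $D^s_N(A)$.

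\textbf{On the main isomorphism.} Your route through Theorem~\ref{acyclic-stable equivalence}, the acyclic complex $P^\bullet$, and compatible choices of injective hulls is workable in principle but needlessly heavy, and your own remarks about bookkeeping and ``wrap-around'' indicate you have not pinned down the degree shifts. The paper avoids all of this with a single elementary short exact sequence of $N$-complexes,
\[
\overline{G}(X_\bullet)\ \hookrightarrow\ \mu^{N-1}_N(X_{N-1})\ \twoheadrightarrow\ \overline{G}(\Cok_\bullet(X))[1],
\]
where the map in degree $i$ (for $1\le i\le N-1$) is the composite $\alpha_i^{N-1-i}:X_i\hookrightarrow X_{N-1}$. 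The middle term is contractible, so the induced triangle in $D^s_N(A)$ immediately gives a natural isomorphism $\overline{G}(\Cok_\bullet(X))[1]\cong\Sigma\overline{G}(X_\bullet)$, and applying $[-1]$ finishes. No acyclic resolution $P^\bullet$, no compatibility of $\Mimo$ data, and no truncation arguments are needed. Your approach would eventually yield the same isomorphism, but the paper's construction makes the degree shift and the appearance of $\Sigma$ transparent in one line.
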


\begin{proof}
Let $(X_\bullet, \alpha_\bullet) \in \stab{N}{A}$.  The short exact sequence in $C^b_N(A)$
$$\overline{G}(X_\bullet) \hookrightarrow \mu^{N-1}_{N}(X_{N-1}) \twoheadrightarrow \overline{G}(\Cok_\bullet(X))[1]$$ induces a triangle in $D^s_N(A)$.  The middle term is null-homotopic, so we have an isomorphism $\overline{G}(\Cok_\bullet(X))[1] \xrightarrow{\sim} \Sigma(\overline{G}(X_\bullet))$ in $D^s_N(A)$; since the above exact sequence is natural in $X_\bullet$, so is this isomorphism.  Applying $[-1]$ yields a natural isomorphism $\overline{G} \circ \Cok \cong \Sigma [-1] \circ \overline{G}$.

Applying $\overline{G}$ to the short exact sequence in $\MMor_k(A)$
$$I_\bullet(\Cok(X)) \rightarrowtail \Mimo_\bullet(\Cok(X)) \twoheadrightarrow \Cok_\bullet(X)$$
we obtain a triangle in $D^s_N(A)$.  The left term is mapped to $D^{perf}_N(A)$, hence vanishes; we obtain an isomorphism $\overline{G}R(X_\bullet) \cong \overline{G}(\Cok_\bullet(X))$ which is clearly natural in $X_\bullet$.  Thus $\overline{G}\circ R \cong \overline{G} \circ \Cok \cong \Sigma[-1] \circ \overline{G}$.
\end{proof}

\subsection{Upper Triangular Matrices}

Throughout this section, we shall let $n = N-1$ to simplify notation.

Let $B = T_{n}(A)$ denote the $F$-algebra of $n \times n$ upper-triangular matrices with entries in $A$.  We write $E_{i,j}$ for the matrix with $1_A$ in the $(i,j)$-th position (that is, row $i$ and column $j$) and $0$'s everywhere else.

Given $X\in \rmod{B}$, we can create the following object in $\Mor_{n-1}(A)$:
\begin{eqnarray*}
\begin{tikzcd}
XE_{1,1} \arrow[r, "r_{E_{1,2}}"] & XE_{2,2} \arrow[r, "r_{E_{2,3}}"] & \cdots \arrow[r, "r_{E_{n-1,n}}"] &  XE_{n,n}
\end{tikzcd}
\end{eqnarray*}
More explicitly, there is an equivalence $M_r:  \rmod{B} \xrightarrow{\sim} \Mor_{n-1}(A)$ given by $M_r(X) = (XE_{\bullet, \bullet}, r_{E_{\bullet, \bullet+1}})$.  The inverse of $M_r$ is given by $M^{-1}_r(X_\bullet, f_\bullet) = \bigoplus_{i=1}^n X_i$, where $E_{i,i}$ acts as projection onto the $i$-th coordinate and $E_{i, i+j}$ acts as $f_i^j$.

Similarly, there is an equivalence $M_l: \lmod{B} \xrightarrow{\sim} \Mor_{n-1}(A^{op})$ which is given by $M_l(X) = (E_{n+1-\bullet, n+1-\bullet}X, l_{E_{n-\bullet, n+1-\bullet}})$.  Its inverse is given by $M^{-1}_l(X_\bullet, f_\bullet) = \bigoplus_{i=1}^n X_i$, where $E_{i,i}$ acts as projection onto $X_{n+1-i}$ and $E_{i-j,i}$ acts as $f_{n+1-i}^j$.

It is easy to check that $M_r(B) \cong \bigoplus_{i=1}^n \chi_i(A)_\bullet \cong M_l(B)$ has injective dimension $1$ in $\Mor_{n-1}(A)$, hence $B$ is Gorenstein.  (Recall the definition of $\chi_i(A)_\bullet$ from Section \ref{Projective and Injective Objects}.)  The following proposition allows us to identify the monomorphism category of $A$ with the Gorenstein projective $B$-modules.

\begin{proposition}
The functors $M_r$ and $M_l$ restrict to the following exact equivalences:\\
1)  $M_r:  \Gproj{B} \xrightarrow{\sim} \MMor_{n-1}(A)$\\
2)  $M_l:  \Gproj{B^{op}} \xrightarrow{\sim} \MMor_{n-1}(A^{op})$\\
3)  $M_r:  \Ginj{B} \xrightarrow{\sim} \EMor_{n-1}(A)$\\
4)  $M_l:  \Ginj{B^{op}} \xrightarrow{\sim} \EMor_{n-1}(A^{op})$

Each of the above equivalences descends to a triangulated equivalence between the respective stable categories.
\end{proposition}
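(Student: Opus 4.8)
The plan is to transport the $\Ext$-vanishing conditions defining $\Gproj{B}$ and $\Ginj{B}$ across $M_r$ and $M_l$, and then carry out the resulting $\Ext$-computations inside $\Mor_{n-1}(A)$, using the classification of its projective and injective objects (Proposition~\ref{basic inj/proj characterization}). Since $M_r$ and $M_l$ are equivalences of abelian categories, they are exact, preserve projective and injective objects, and induce isomorphisms on all $\Ext$-groups; combined with $M_r(B)\cong M_l(B)\cong\bigoplus_{i=1}^n\chi_i(A)_\bullet$, this reduces everything to computations with the objects $\chi_i(A)_\bullet$. Statements (2) and (4) will follow from (1) and (3) by replacing $A$ with $A^{op}$ — which is again finite-dimensional self-injective — using that under the transpose isomorphism $B^{op}\cong T_n(A^{op})$ the functor $M_l$ corresponds to the functor $M_r$ for $T_n(A^{op})$. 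So I would prove only (1) and (3).

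For (1): with $(Z_\bullet,\gamma_\bullet):=M_r(X)$, and using $M_r(B)\cong\bigoplus_{i=1}^n\chi_i(A)_\bullet$, we have $X\in\Gproj B$ iff $\Ext^m_{\Mor_{n-1}(A)}(Z_\bullet,\chi_i(A)_\bullet)=0$ for all $m\ge1$ and all $1\le i\le n$. Let $\omega_j(A)_\bullet$ be the object of $\Mor_{n-1}(A)$ which is $A$ in the first $j$ positions and $0$ afterwards, with identity maps between the copies of $A$ (so $\omega_n(A)_\bullet=\chi_1(A)_\bullet$). By Proposition~\ref{basic inj/proj characterization}, $\chi_1(A)_\bullet$ and each $\omega_{i-1}(A)_\bullet$ are injective objects, and $0\to\chi_i(A)_\bullet\to\chi_1(A)_\bullet\to\omega_{i-1}(A)_\bullet\to 0$ (the first map the identity in degrees $\ge i$, zero below) is exact; hence $\chi_i(A)_\bullet$ has injective dimension $\le 1$ and only $m=1$ need be treated. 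Computing $\Ext^1$ from this injective resolution, and noting that a morphism out of $\chi_1(A)_\bullet$ (resp.\ $\omega_{i-1}(A)_\bullet$) is determined by its component in degree $n$ (resp.\ $i-1$), one identifies $\Ext^1_{\Mor_{n-1}(A)}(Z_\bullet,\chi_i(A)_\bullet)$ with the cokernel of $\Hom_A(Z_n,A)\to\Hom_A(Z_{i-1},A)$ given by precomposition with $\gamma_{n-1}\cdots\gamma_{i-1}:Z_{i-1}\to Z_n$. Because a finite-dimensional self-injective algebra is an injective cogenerator of its module category, $\Hom_A(-,A)$ is exact and faithful, and a short diagram chase gives $\Ext^1_{\Mor_{n-1}(A)}(Z_\bullet,\chi_i(A)_\bullet)\cong\Hom_A(ker(\gamma_{n-1}\cdots\gamma_{i-1}),A)$, which vanishes iff $\gamma_{n-1}\cdots\gamma_{i-1}$ is monic. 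As $i$ runs over $2,\dots,n$ (the case $i=1$ being vacuous), and since $\gamma_j$ is monic whenever $\gamma_{n-1}\cdots\gamma_j$ is, this holds for every $i$ exactly when every $\gamma_j$ is monic, i.e.\ iff $(Z_\bullet,\gamma_\bullet)\in\MMor_{n-1}(A)$; so $M_r$ restricts to an equivalence $\Gproj B\xrightarrow{\sim}\MMor_{n-1}(A)$.

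For (3): dually, $X\in\Ginj B$ iff $\Ext^m_{\Mor_{n-1}(A)}(M_r(DB),Z_\bullet)=0$ for all $m\ge1$. By Proposition~\ref{basic inj/proj characterization} the injective objects of $\Mor_{n-1}(A)$ are exactly the finite direct sums of the $\omega_j(I)_\bullet$ ($1\le j\le n$, $I$ projective-injective in $\rmod A$), and since $DB$ is an injective cogenerator of $\rmod B$, $M_r(DB)$ is an injective cogenerator of $\Mor_{n-1}(A)$, so every $\omega_j(I)_\bullet$ with $I$ indecomposable is a direct summand of it; thus the condition becomes $\Ext^m_{\Mor_{n-1}(A)}(\omega_j(I)_\bullet,Z_\bullet)=0$ for all $m\ge1$, all $1\le j\le n$, and all projective-injective $I$. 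Here $\omega_n(I)_\bullet=\chi_1(I)_\bullet$ is projective, and for $1\le j\le n-1$ there is a short exact sequence $0\to\chi_{j+1}(I)_\bullet\to\chi_1(I)_\bullet\to\omega_j(I)_\bullet\to 0$ with projective ends, so $\omega_j(I)_\bullet$ has projective dimension $\le1$ and again only $m=1$ matters. Dualizing the previous computation — a morphism from $\chi_1(I)_\bullet$ (resp.\ $\chi_{j+1}(I)_\bullet$) into $Z_\bullet$ is determined by its component in degree $1$ (resp.\ $j+1$), and $\Hom_A(I,-)$ is exact since $I$ is projective — one gets $\Ext^1_{\Mor_{n-1}(A)}(\omega_j(I)_\bullet,Z_\bullet)\cong\Hom_A(I,coker(\gamma_j\cdots\gamma_1))$ with $\gamma_j\cdots\gamma_1:Z_1\to Z_{j+1}$. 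Taking $I=A$, a projective generator, this vanishes for all $I$ iff $\gamma_j\cdots\gamma_1$ is epic, and as before this holds for all $j$ exactly when every $\gamma_j$ is epic, i.e.\ iff $(Z_\bullet,\gamma_\bullet)\in\EMor_{n-1}(A)$; so $M_r$ restricts to an equivalence $\Ginj B\xrightarrow{\sim}\EMor_{n-1}(A)$.

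Finally, for the triangulated statement: each of $M_r$, $M_l$ is an exact equivalence of abelian categories, so each of the four restrictions above is an exact equivalence, reflecting exactness, between Frobenius exact categories, all of which carry the exact structure inherited component-wise from the ambient abelian category. Moreover $M_r$ carries $\rproj B$ — the projective-injective objects of $\Gproj B$ — onto the projective objects of $\Mor_{n-1}(A)$, which by the remark following Proposition~\ref{basic inj/proj characterization} are exactly the projective-injective objects of $\MMor_{n-1}(A)$; dually it carries $\rinj B$ onto the injective objects of $\Mor_{n-1}(A)$, i.e.\ the projective-injective objects of $\EMor_{n-1}(A)$; and the same holds for $M_l$. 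Since an exact equivalence between Frobenius exact categories that preserves projective-injective objects induces a triangulated equivalence of the associated stable categories, all four equivalences descend as claimed. I expect the main obstacle to be the two $\Ext$-computations in $\Mor_{n-1}(A)$: pinning down the correct length-one (co)resolutions of the $\chi$- and $\omega$-objects and the degreewise form of the relevant $\Hom$-groups, and keeping the indexing straight — once that is done, self-injectivity of $A$ finishes the argument essentially formally.
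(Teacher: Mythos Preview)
Your proof is correct. Part (1) is essentially the paper's argument: both build the length-one injective resolution $0\to\chi_i(A)_\bullet\to\chi_1(A)_\bullet\to\omega_{i-1}(A)_\bullet\to 0$ (the paper writes $\overline{\chi}_i(A)_\bullet$ for your $\omega_{i-1}(A)_\bullet$) and compute $\Ext^1$ from it; the paper phrases the final step as a homotopy computation while you identify the Hom-groups with $\Hom_A(Z_n,A)$ and $\Hom_A(Z_{i-1},A)$ directly, but the content is the same.

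The genuine divergence is in (3). The paper deduces (3) from (2) via the $F$-linear duality identity $M_r\cong D_*M_lD$ (Proposition~\ref{duality compatibility}), which in fact appears \emph{after} the present proposition in the text. You instead run the dual of the argument for (1) directly: build the projective resolution $0\to\chi_{j+1}(I)_\bullet\to\chi_1(I)_\bullet\to\omega_j(I)_\bullet\to 0$, identify $\Ext^1(\omega_j(I)_\bullet,Z_\bullet)$ with $\Hom_A(I,\operatorname{coker}(\gamma_j\cdots\gamma_1))$, and conclude using that $A$ is a projective generator. Your route is longer but self-contained, avoiding the forward reference; the paper's route is shorter but relies on having the duality compatibility available. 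For (2) and (4), the paper simply repeats the argument (or dualises), while you invoke the transpose isomorphism $B^{op}\cong T_n(A^{op})$ to reduce to (1) and (3) for $A^{op}$; this is a cosmetic difference. The triangulated-descent paragraph is the same in both.
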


\begin{proof}  It is clear that $M_r$ and $M_l$ are exact equivalences.  Once 1)-4) have been established, it is also clear that the functors descend to triangulated equivalences between the stable categories.  All that is needed is to show that each functor has the appropriate image.
\\\\
1)  Let $(X_\bullet, \alpha_\bullet) \in \Mor_{n-1}(A)$.  Since $M_r(B) \cong \bigoplus_{i=1}^n \chi_i(A)_\bullet$, it suffices to prove that $X_\bullet \in \MMor_{n-1}(A)$ if and only if $\Ext^1(X_\bullet, \chi_i(A)_\bullet) = 0$ for all $1 < i \le n$.  (Since $\chi_1(A)_\bullet$ is injective, $\Ext^1(X_\bullet, \chi_1(A)_\bullet) = 0$ for any $X_\bullet$.)  Let $\overline{\chi}_i(A)_\bullet$ denote the cokernel of the natural inclusion $\chi_i(A)_\bullet \hookrightarrow \chi_1(A)_\bullet$.  Define a complex in $C^b(\Mor_{n-1}(A))$ 
\begin{eqnarray*}
I^\bullet(i) = \begin{tikzcd}[column sep=small] \cdots \arrow[r] & 0 \arrow[r] & \chi_1(X)_\bullet \arrow[r, two heads] & \overline{\chi}_i(X)_\bullet \arrow[r] & 0 \arrow[r] & \cdots \end{tikzcd}
\end{eqnarray*}
with $\chi_1(X)_\bullet$ in degree $0$.  $I^\bullet(i)$ is an injective resolution of $\chi_i(A)_\bullet$, hence $\Ext^1(X_\bullet, \chi_i(A)_\bullet) = \Hom_{K^b(\MMor_{n-1}(A))}(X_\bullet, I^\bullet(i)[1])$.  Note that a morphism of complexes $X_\bullet \rightarrow I^\bullet(i)[1]$ is the same data as a morphism $f_{i-1}: X_{i-1} \rightarrow A$; such a morphism is null-homotopic if and only if $f_{i-1}$ factors through $\alpha_{i-1}^j$ for all $1 \le j \le n-i+1$.  

Suppose $X_\bullet \in \MMor_{n-1}(A)$.  Since $\alpha_{i-1}^j$ is a monomorphism and $A$ is injective, any morphism $f_{i-1}: X_{i-1} \rightarrow A$ admits a factorization $f_{i-1} = g_{i-1+j}\alpha_{i-1}^{j}$, hence $\Ext^1(X_\bullet, \chi_i(A)_\bullet) = 0$.  Conversely, if $\alpha_{i-1}$ is not injective for some $1 < i \le n$, then there is a nonzero morphism $ker(\alpha_{i-1}) \rightarrow A$ which can be lifted to a morphism $f_{i-1}: X_{i-1} \rightarrow A$.  Since $f_{i-1}$ is nonzero on $ker(\alpha_{i-1})$, it cannot factor through $\alpha_{i-1}$, hence $f_{i-1}$ defines a nonzero element of $\Ext^1(X_\bullet, \chi_{i}(A)_\bullet)$.  Thus $M_r$ identifies $\Gproj{B}$ with $\MMor_{n-1}(A)$.
\\\\
2)  Since $M_l(B) \cong \bigoplus_{i=1}^n \chi_i(A)_\bullet$, the proof is identical to 1).
\\\\
3)  By Proposition \ref{duality compatibility} below, $M_r \cong D_*M_l D$.  The result then follows from 2).
\\\\
4)  The result follows from Proposition \ref{duality compatibility} and 1).
\end{proof}


\subsection{Duality and the Nakayama Functor}
In this section, we continue to write $n = N-1$.

Given a covariant functor $F: \rmod{A} \rightarrow \mathcal{C}$, there is an induced functor $F_*: \Mor_{n-1}(A) \rightarrow \Mor_{n-1}(\mathcal{C})$ given by $F(X_\bullet, \alpha_\bullet) = (F(X_\bullet), F(\alpha_\bullet))$.  Given a contravariant functor $G: (\rmod{A})^{op} \rightarrow \mathcal{C}$, we likewise obtain a functor $G_*:  \Mor_{n-1}(A)^{op} \rightarrow \Mor_{n-1}(\mathcal{C})$, this time given by $G_*(X_\bullet, \alpha_\bullet) = (G(X_{n+1-\bullet}), G(\alpha_{n-\bullet}))$.


Recall the Nakayama functor $\nu_A$, defined in Section \ref{Gorenstein Algebras} to be the composition of the dualities $D = \Hom_F(-,F)$ and $\Hom_A(-,A)$.  Note that both of the induced functors $D_*$ and $\Hom_A(-,A)_*$ define dualities $\Mor_{n-1}(A)^{op} \xrightarrow{\sim} \Mor_{n-1}(A^{op})$ which identify the monomorphism subcategory with the epimorphism subcategory, and vice versa.  It follows that the equivalence $\nu_{A*} = D_* \Hom_{A}(-, A)_*: \Mor_{n-1}(A) \xrightarrow{\sim} \Mor_{n-1}(A)$, preserves both $\MMor_{n-1}(A)$ and $\EMor_{n-1}(A)$ and descends to the corresponding stable categories.

In contrast with the behavior of $\nu_{A*}$, recall that $\nu_B$ restricts to an equivalence $\Gproj{B} \xrightarrow{\sim} \Ginj{B}$; it is therefore worth investigating the relationship between these two functors.  Before we express $\nu_B$ in the language of the monomorphism category, it will be helpful to first translate the $F$-linear duality on $B$.

\begin{proposition}
\label{duality compatibility}
There is an isomorphism $D_* \circ M_l \cong M_r \circ D$ of functors $(\lmod{B})^{op} \rightarrow \Mor_{n-1}(A)$.  Similarly, $M_l \circ D \cong D_* \circ M_r$.
\end{proposition}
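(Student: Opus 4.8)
The plan is to construct the natural isomorphism componentwise, using the diagonal idempotents $E_{i,i} \in B$. The starting observation is that for any left $B$-module $X$ and any $1 \le i \le n$, restriction of functionals defines an isomorphism
\[
\rho_i^X \colon (DX)E_{i,i} \xrightarrow{\ \sim\ } D(E_{i,i}X), \qquad fE_{i,i} \longmapsto f|_{E_{i,i}X},
\]
of right $A$-modules, where $E_{i,i}X$ carries its natural left $A$-module structure coming from the corner ring $E_{i,i}BE_{i,i} \cong A$. Indeed, this is just the functor $D$ applied to the idempotent splitting $X \cong E_{i,i}X \oplus (1-E_{i,i})X$ of underlying vector spaces, together with the identification of the corner module actions; alternatively, $fE_{i,i}$ is supported on $E_{i,i}X$ and may be prescribed arbitrarily there. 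Naturality of $\rho_i^X$ in $X$ is clear, since the relevant maps on both sides are just ``precompose a functional with a $B$-module homomorphism.''

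Next I would unwind the two composite functors on objects and match the structure maps. On one side, $M_r(DX)$ has $i$-th term $(DX)E_{i,i}$ and structure map $r_{E_{i,i+1}} \colon fE_{i,i} \mapsto fE_{i,i+1}$. On the other side, carefully tracking the index reversal $\bullet \mapsto n+1-\bullet$ built into both $M_l$ and $D_*$, the diagram $D_*(M_l(X))$ has $i$-th term $D\big((M_l X)_{n+1-i}\big) = D(E_{i,i}X)$ and structure map $D(l_{E_{i,i+1}})$, where $l_{E_{i,i+1}} \colon E_{i+1,i+1}X \to E_{i,i}X$ is left multiplication. The essential check is that the $\rho_i^X$ form a morphism of diagrams: pushing $fE_{i,i}$ through $r_{E_{i,i+1}}$ and then restricting to $E_{i+1,i+1}X$ gives the functional $y \mapsto f(E_{i,i+1}y)$, which is exactly $D(l_{E_{i,i+1}})\big(\rho_i^X(fE_{i,i})\big)$. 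Hence $\rho^X := (\rho_i^X)_i$ is an isomorphism in $\Mor_{n-1}(A)$, natural in $X$, proving $M_r \circ D \cong D_* \circ M_l$.

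For the second statement I would argue formally rather than repeat the computation. Since $D$ is a duality, $D \circ D \cong \mathrm{id}$ on finite-dimensional modules over $A$, $A^{op}$, $B$, and $B^{op}$; since the substitution $\bullet \mapsto n+1-\bullet$ squares to the identity, this upgrades to $D_* \circ D_* \cong \mathrm{id}$ on the relevant $\Mor$-categories. Applying $D_*$ to the isomorphism $D_* \circ M_l \cong M_r \circ D$ and cancelling $D_* \circ D_*$ gives $M_l \cong D_* \circ M_r \circ D$ as functors $\lmod{B} \to \Mor_{n-1}(A^{op})$; precomposing with $D \colon (\rmod{B})^{op} \to \lmod{B}$ and cancelling $D \circ D$ then yields $M_l \circ D \cong D_* \circ M_r$.

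The only genuine difficulty is bookkeeping: one must keep straight two independent order reversals --- the one in the very definition of $M_l$ and the one hard-wired into $D_*$ (equivalently, into the construction $G \mapsto G_*$ of the previous subsection) --- together with the $(-)^{op}$ decorations on the source categories, and confirm they cancel so that $D_* \circ M_l$ really produces an object of $\Mor_{n-1}(A)$ (not of its opposite, and with the arrows pointing the right way). Once the indices are pinned down, each verification is a single line with matrix units.
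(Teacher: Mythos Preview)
Your proof is correct and follows essentially the same approach as the paper: both construct the componentwise isomorphism between $(DX)E_{i,i}$ and $D(E_{i,i}X)$ via the idempotent splitting, verify compatibility with the structure maps by a one-line computation with matrix units, and deduce the second isomorphism formally by pre- and postcomposing with $D$ and $D_*$. The only cosmetic difference is that the paper writes the isomorphism in the inverse direction, as $l_{E_{i,i}}^* \colon D(E_{i,i}X) \hookrightarrow DX$ with image $(DX)E_{i,i}$, but this is your $\rho_i^X$ read backwards.
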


\begin{proof}
Let $X \in \lmod{B}$.  The left $A$-module map $l_{E_{i,i}}: X \twoheadrightarrow E_{i,i}X$ yields a monomorphism $l_{E_{i,i}}^*: D(E_{i,i}X) \hookrightarrow DX$ whose image is $(DX)E_{i,i}$.  We have a commutative diagram in $\rmod{A}$.
\begin{eqnarray*}
\begin{tikzcd}
D(E_{i-1, i-1}X) \arrow[r, "l_{E_{i-1,i}}^*"]  \arrow[d, swap, "\sim"] \arrow[d, "l_{E_{i-1,i-1}}^*"]& D(E_{i,i}X) \arrow[d, "l_{E_{i,i}}^*"] \arrow[d, swap, "\sim"]\\
(DX)E_{i-1, i-1} \arrow[r, "r_{E_{i-1,i}}"] & (DX)E_{i,i}
\end{tikzcd}
\end{eqnarray*}
hence $l_{E_{\bullet, \bullet}}^*: D_*M_l(X) \xrightarrow{\sim} M_rD(X)$ is an isomorphism which is easily verified to be natural in $X$.

The second isomorphism follows immediately by precomposing with $D$ and postcomposing with $D_*$.
\end{proof}

\begin{proposition}
\label{Nakayama compatibility}
There is an isomorphism $M_r \circ \nu_{B} \cong \Cok\nu_{A*} \circ M_r$ of functors $\Gproj{B} \rightarrow \EMor_{n-1}(A)$.
\end{proposition}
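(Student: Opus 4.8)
The plan is to factor both Nakayama functors through the ordinary $F$-linear duality $D$ and the $\Hom_A(-,A)$-duality, and to reduce the statement to an explicit description of $\Hom_B(X,B)$ for $X \in \Gproj B$.

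First I would rewrite both sides of the claimed isomorphism. Since $A$ is self-injective, $\nu_A$ is exact, so it commutes with the formation of cokernels; hence the componentwise functor $\nu_{A*}$ commutes with $\Cok$, giving $\Cok \circ \nu_{A*} \circ M_r \cong \nu_{A*} \circ \Cok \circ M_r$. Writing $\nu_A = D \circ \Hom_A(-,A)$ and passing to the induced functors on $\Mor_{n-1}$, the order-reversals built into $D_*$ and into $\Hom_A(-,A)_*$ cancel, so $\nu_{A*} \cong D_* \circ \Hom_A(-,A)_*$; thus the right-hand side becomes $D_* \circ \Hom_A(-,A)_* \circ \Cok \circ M_r$. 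On the left, using $\nu_B = D \circ \Hom_B(-,B)$ together with Proposition \ref{duality compatibility} in the form $M_r \circ D \cong D_* \circ M_l$, we get $M_r \circ \nu_B \cong D_* \circ M_l \circ \Hom_B(-,B)$. Since applying $D_*$ carries natural isomorphisms to natural isomorphisms, it suffices to produce a natural isomorphism
\[ M_l \circ \Hom_B(-,B) \;\cong\; \Hom_A(-,A)_* \circ \Cok \circ M_r \]
of functors $\Gproj B \to \MMor_{n-1}(A^{op})$.

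To build it, fix $X \in \Gproj B$ and put $(X_\bullet, \alpha_\bullet) = M_r(X)$, so that $X_i = XE_{i,i}$, $\alpha_i = r_{E_{i,i+1}}$, and (since $X$ lies in $\Gproj B$) every $\alpha_i$, hence every composite $\alpha_i^j$, is a monomorphism. Decomposing the regular module as $B = \bigoplus_{i=1}^n E_{i,i}B$ gives $\Hom_B(X,B) = \bigoplus_i \Hom_B(X, E_{i,i}B)$ as a left $B$-module, with $E_{j,j}$ acting as the projection onto the $j$-th summand; consequently the $m$-th component of $M_l(\Hom_B(X,B))$ is $\Hom_B(X, E_{n+1-m,\,n+1-m}B)$. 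The essential step is a natural identification $\Hom_B(X, E_{i,i}B) \cong \Hom_A\big((\Cok_\bullet X)_i, A\big)$: a right $B$-linear map $g\colon X \to E_{i,i}B$ vanishes on $XE_{k,k}$ for $k<i$ and is determined by its restriction $g^{(n)}\colon X_n \to A$, subject precisely to $g^{(n)} \circ \alpha_{i-1}^{\,n-i+1} = 0$ (with the convention $X_0 = 0$, so that $(\Cok_\bullet X)_1 = X_n$); since $A$ is self-injective, $\Hom_A(-,A)$ is exact, and applying it to the short exact sequence $0 \to X_{i-1} \xrightarrow{\alpha_{i-1}^{n-i+1}} X_n \to (\Cok_\bullet X)_i \to 0$ identifies the space of such $g$ with $\Hom_A\big((\Cok_\bullet X)_i, A\big)$.

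It then remains to match transition maps. Left multiplication by $E_{i-1,i}$, which realizes the relevant transition map of $M_l(\Hom_B(X,B))$, sends $g^{(n)}$ to $g^{(n)}$; under the identification above this is the inclusion of constrained subspaces of $\Hom_A(X_n,A)$, which is exactly $\Hom_A(-,A)$ applied to the structure epimorphism $(\Cok_\bullet X)_{i-1} \twoheadrightarrow (\Cok_\bullet X)_i$, and therefore — after the order-reversal in $\Hom_A(-,A)_*$ — equals the transition map of $\Hom_A(-,A)_* \Cok_\bullet(X)$. Checking naturality of $g \mapsto g^{(n)}$ in $X$ assembles these degreewise identifications into the desired isomorphism of functors, and combining this with the reductions of the second paragraph proves the proposition. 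I expect the only genuine difficulty to be organizational: keeping the three independent index-reversals — in $M_l$, in $D_*$, and in $\Hom_A(-,A)_*$ — consistently aligned, and verifying that the transition maps (not merely the degreewise objects) correspond, together with the routine but slightly fiddly computation of the left $B$-module structure on $\Hom_B(X,B)$.
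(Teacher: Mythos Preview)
Your proof is correct and follows essentially the same approach as the paper: both reduce via $D_*$ and Proposition~\ref{duality compatibility} to constructing a natural isomorphism $M_l\Hom_B(-,B)\cong \Hom_A(-,A)_*\Cok M_r$, then identify $\Hom_B(X,E_{i,i}B)$ with $\Hom_A(X_n/X_{i-1},A)$ via restriction to $X_n$. The only presentational difference is that the paper proves this identification by explicit injectivity and surjectivity computations, whereas you invoke exactness of $\Hom_A(-,A)$ on the short exact sequence $0\to X_{i-1}\to X_n\to (\Cok_\bullet X)_i\to 0$; the verification of transition maps and naturality is the same in both.
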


\begin{proof}
It is enough to show that $D_*M_r \nu_{B} \cong D_*\Cok\nu_{A*} M_r$.  By Proposition \ref{duality compatibility}, we have that
$$D_*M_r \nu_{B} \cong M_l D\nu_{B} \cong M_l\Hom_{B}(-, B)$$
Since $\nu_A$ is exact, it is easily verified that $\Cok \nu_{A*} \cong \nu_{A*}\Cok$, hence 
$$D_*\Cok\nu_{A*} M_r \cong D_*\nu_{A*}\Cok M_r \cong \Hom_A(-,A)_*\Cok M_r$$
It thus suffices to construct $\zeta:  M_l\Hom_{B}(-,B) \xrightarrow{\sim} \Hom_A(-, A)_*\Cok M_r$, an isomorphism of functors $\Gproj{B}^{op} \rightarrow \MMor_{n-1}(A^{op})$.

Let $X \in \Gproj{B}$.  Note that $E_{i,i}\Hom_{B}(X, B)$ consists of precisely those homomorphisms with image in $E_{i,i}B$.  Thus 
$$M_l\Hom_{B}(X,B) = (\Hom_{B}(X, E_{n+1-\bullet, n+1-\bullet}B), l_{E_{n-\bullet, n+1-\bullet}})$$
A direct computation shows that 
$$\Hom_A(-,A)_*\Cok M_r(X) = (\Hom_A(XE_{n,n}/XE_{n-\bullet, n}, A), \pi_{n-\bullet}^*)$$
where $\pi_i: XE_{n,n}/XE_{i-1,n} \twoheadrightarrow XE_{n,n}/XE_{i,n}$ is the canonical projection.  (Here we define $XE_{0,n}$ to be $0$.)

Given $f \in \Hom_{B}(X, E_{i,i}B)$, note that the restriction of $f$ to $XE_{n,n}$ has image in $E_{i,i}BE_{n,n} = E_{i,n}B = E_{i,n}A$, which is canonically isomorphic to $A$ as an $(A,A)$-bimodule.  Furthermore, $f(XE_{i-1,n}) \subseteq E_{i,i}BE_{i-1, n} = 0$, hence the restriction descends to a map
$$\overline{f\mid_{XE_{n,n}}}: XE_{n,n}/XE_{i-1, n} \rightarrow E_{i,n}A \cong A$$  Let $\zeta_{X,i}:  \Hom_{B}(X, E_{i,i}B) \rightarrow \Hom_A(XE_{n,n}/XE_{i-1,n}, A)$ be the map sending $f$ to $\overline{f\mid_{XE_{n,n}}}$.

To show that $\zeta_{X,i}$ is injective, let $f\in ker(\zeta_{X,i})$ and let $x \in X$.  Since $\zeta_{X,i}(f) = 0$, then $f(XE_{n,n}) = 0$ and so $f(x)E_{j,n} = f(xE_{j,n}E_{n,n}) = 0$ for all $j \le n$.  The map $r_{E_{j,n}}: BE_{j,j} \hookrightarrow BE_{n,n}$ is injective for all $j\le n$; it follows from the above equation that $f(x)E_{j,j} = 0$ for all $j \le n$, hence $f(x) = 0$.  Thus $f = 0$ and $\zeta_{X,i}$ is injective.

To see that $\zeta_{X,i}$ is surjective, take any $g\in \Hom_A(XE_{n,n}/XE_{i-1,n}, A)$.  Define $f: X \rightarrow E_{i,i}B$ by $f(x) = \sum_{j=i}^n g(xE_{j,n})E_{i,j}$.  A direct computation shows that for any $1 \le r \le s \le n$,
$$f(xE_{r,s}) = \sum_{j=i}^n g(xE_{r,n})E_{i,s} = f(x)E_{r,s}$$
It follows that $f$ is a right $B$-module morphism and $\zeta_{X,i}(f) = g$.  Thus $\zeta_{X, i}$ is an isomorphism for each $i$.

It is easily checked that $\zeta_{X, n+1-\bullet}$ is a morphism in $\MMor_{n-1}(A^{op})$ and is natural in $X$, hence the two functors are isomorphic.
\end{proof}


\subsection{Serre Duality}

The inclusion functor $\underline{\Gproj{B}} \hookrightarrow \underline{\rmod{B}}$ possesses a right adjoint $P: \underline{\rmod{B}} \rightarrow \underline{\Gproj{B}}$ \cite[Lemma 6.3.7]{krause2021homological}.  We have already seen that $\Mimo$ plays an analogous role in the monomorphism category, so it is no surprise that the two functors are related.

\begin{proposition}
\label{P compatibility}
There is an isomorphism $M_r \circ P \cong \Mimo \circ M_r$ of functors $\underline{\rmod{B}} \rightarrow \stab{N}{A}$.
\end{proposition}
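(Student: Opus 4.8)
The plan is to obtain the isomorphism formally, from the uniqueness of adjoint functors: I will exhibit both $M_r\circ P$ and $\Mimo\circ M_r$ as (the composite with $M_r$ of) the right adjoint of a single inclusion functor.

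First I would record how $M_r$ interacts with the passage to stable categories. Since $M_r\colon\rmod{B}\xrightarrow{\sim}\Mor_{n-1}(A)$ is an exact equivalence, it and a quasi-inverse both preserve projective objects, so the ideal of morphisms factoring through projectives is preserved in either direction; hence $M_r$ descends to an equivalence $\underline{M_r}\colon\underline{\rmod{B}}\xrightarrow{\sim}\underline{\Mor_{n-1}(A)}$. Its restriction $M_r\colon\Gproj{B}\xrightarrow{\sim}\MMor_{n-1}(A)$ is the exact equivalence established above, which descends to a triangulated equivalence $\underline{M_r}'\colon\underline{\Gproj{B}}\xrightarrow{\sim}\stab{N}{A}$. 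The descended inclusions $j\colon\underline{\Gproj{B}}\hookrightarrow\underline{\rmod{B}}$ and $\iota\colon\stab{N}{A}\hookrightarrow\underline{\Mor_{n-1}(A)}$ are well defined because the projective-injective objects of $\MMor_{n-1}(A)$ are exactly the projectives of $\Mor_{n-1}(A)$ (Proposition \ref{basic inj/proj characterization} and the remark following it), while the projective-injective objects of $\Gproj{B}$ are exactly $\rproj{B}$, the projectives of $\rmod{B}$. Since the restriction of $M_r$ to $\Gproj{B}$ is literally $M_r$, the square relating these inclusions and the two copies of $M_r$ commutes already before stabilizing, so $\underline{M_r}\circ j\cong\iota\circ\underline{M_r}'$.

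Now I would invoke the two available adjunctions. By hypothesis $P$ is right adjoint to $j$, so conjugating by the equivalences $\underline{M_r}$ and $\underline{M_r}'$ shows that $\underline{M_r}'\circ P\circ\underline{M_r}^{-1}\colon\underline{\Mor_{n-1}(A)}\rightarrow\stab{N}{A}$ is right adjoint to $\underline{M_r}\circ j\circ(\underline{M_r}')^{-1}\cong\iota$. By Proposition \ref{Mimo properties}(5), $\Mimo\colon\underline{\Mor_{n-1}(A)}\rightarrow\stab{N}{A}$ is also right adjoint to $\iota$. Since right adjoints are unique up to a canonical natural isomorphism, $\underline{M_r}'\circ P\circ\underline{M_r}^{-1}\cong\Mimo$; precomposing with $\underline{M_r}$ yields $\underline{M_r}'\circ P\cong\Mimo\circ\underline{M_r}$, i.e. $M_r\circ P\cong\Mimo\circ M_r$, as claimed.

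The only step needing genuine verification is the bookkeeping of the first paragraph — that $M_r$ descends compatibly to the relevant stable categories and intertwines the two inclusions — and this is immediate from the characterizations of projective objects recalled there. One could instead compare $M_r(P(X))$ and $\Mimo(M_r(X))$ directly, but that amounts to re-proving that $\Mimo\circ M_r$ computes the right $\Gproj{B}$-approximation, which the adjunction argument avoids entirely.
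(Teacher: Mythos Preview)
Your argument is correct and is essentially identical to the paper's: both verify that the inclusions $\stab{N}{A}\hookrightarrow\underline{\Mor_{n-1}(A)}$ and $\underline{\Gproj{B}}\hookrightarrow\underline{\rmod{B}}$ are intertwined by $M_r$, and then invoke uniqueness of right adjoints (the paper conjugates $\Mimo$ by $M_r$ to compare with $P$, whereas you conjugate $P$ by $M_r$ to compare with $\Mimo$, but this is the same argument). Your additional care in checking that $M_r$ descends compatibly to the stable categories is welcome but not a departure from the paper's method.
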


\begin{proof}
Let $\iota_1: \stab{N}{A} \hookrightarrow \underline{\Mor_{n-1}}(A)$ and $\iota_2: \underline{\Gproj{B}} \hookrightarrow \underline{\rmod{B}}$ be the inclusion functors.  It is clear that $\iota_1 M_r = M_r \iota_2$.  By Proposition \ref{Mimo properties}, $\Mimo$ is right adjoint to $\iota_1$; it follows that both $P$ and $M_r^{-1} \Mimo M_r$ are right adjoint to $\iota_2$, hence $P \cong M_r^{-1} \Mimo M_r$.  The result follows.
\end{proof}

We are ready to describe the Serre functors on $\stab{N}{A}$ and $D^s_N(A)$.  We shall write $\Omega_A$, $\Omega_B$, and $\Omega_N$ to denote the syzygy functors on $\stab{}{A}$, $\stab{}{B}$ and $\stab{N}{A}$, respectively.  Recall that since $A$ is self-injective, $\nu_A$ is exact and so lifts to $D^s_N(A)$.

\begin{theorem}
\label{Serre}
$\Omega_N R \nu_{A_*}$ is a Serre functor on $\stab{N}{A}$.  $[-1]\nu_{A}$ is a Serre functor on $D^s_N(A)$.
\end{theorem}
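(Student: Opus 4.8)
The two assertions of the theorem are equivalent, so the plan is to prove one of them and transport. A Serre functor, when it exists, is unique up to natural isomorphism and is carried along triangulated equivalences; since $\overline{G}:\stab{N}{A}\to D^s_N(A)$ is such an equivalence (Theorem \ref{stable-singularity equivalence}) and $\stab{N}{A}$ is known to possess a Serre functor (\cite{ringel2008auslander} for $N=3$, \cite{xiong2014auslander} in general), it suffices to (i) check that $\overline{G}$ intertwines $\Omega_N R\nu_{A_*}$ with $[-1]\nu_A$, and then (ii) verify that one of the two has the Serre property. For (i): $\overline{G}$ is a triangulated functor, hence $\overline{G}\Omega_N\cong\Sigma^{-1}\overline{G}$; Proposition \ref{rotation compatibility} gives $\overline{G}R\cong\Sigma[-1]\overline{G}$; and $\overline{G}\nu_{A_*}\cong\nu_A\overline{G}$ holds essentially by construction, since $G$ merely reinterprets a monomorphism diagram as an $N$-complex concentrated in degrees $1,\dots,N-1$ while $\nu_{A_*}=D_*\Hom_A(-,A)_*$ restricts on $\MMor_{N-2}(A)$ to the componentwise Nakayama functor, matching the componentwise action of $\nu_A$ on $N$-complexes. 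Composing these three isomorphisms yields $\overline{G}(\Omega_N R\nu_{A_*})\cong\Sigma^{-1}\Sigma[-1]\,\nu_A\overline{G}\cong[-1]\nu_A\,\overline{G}$, which is (i).

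For (ii) I would work on the $\stab{N}{A}$ side and exploit the identification of $\stab{N}{A}$ with $\underline{\Gproj{B}}$ via the exact equivalence $M_r$, where $B=T_{N-1}(A)$ is a Gorenstein algebra (its free module has injective dimension $1$ in $\Mor_{N-2}(A)$), so that $\underline{\Gproj{B}}\cong D^s(B)$ by Buchweitz's theorem. By Auslander--Reiten theory for Gorenstein algebras (see \cite[Chapter 6]{krause2021homological}), $\underline{\Gproj{B}}$ carries a Serre functor whose shape is expressible through the Nakayama functor $\nu_B$, the syzygy $\Omega_B$, and the Gorenstein-projective approximation functor $P:\underline{\rmod{B}}\to\underline{\Gproj{B}}$; concretely it is $\Omega_B\circ P\circ\nu_B$, which specialises for $N=2$ (where $B=A$ and $P$ is the identity) to the classical Serre functor $\Omega_A\nu_A$ on $\stab{}{A}$. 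The final step is to push $\Omega_B P\nu_B$ through $M_r$: this functor commutes with syzygies, Proposition \ref{P compatibility} gives $M_r P\cong\Mimo M_r$, Proposition \ref{Nakayama compatibility} gives $M_r\nu_B\cong\Cok\nu_{A_*}M_r$, and $R=\Mimo\circ\Cok$, so that $M_r(\Omega_B P\nu_B)\cong\Omega_N R\nu_{A_*}M_r$. Hence $\Omega_N R\nu_{A_*}$ is a Serre functor on $\stab{N}{A}$, and by (i) $[-1]\nu_A$ is one on $D^s_N(A)$.

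The principal obstacle is the bookkeeping of the various ``shift-like'' functors. For $N>2$ the suspension $\Sigma$ of $D^s_N(A)$ is not the complex shift $[1]$ of $N$-complexes---one only has $\Sigma^2\cong[N]$---so relating $\Sigma$, the shift $[-1]$, the stable syzygies $\Omega_N$ on $\stab{N}{A}$ and $\Omega_B$ on $\underline{\Gproj{B}}$, and the cosyzygy suspensions all at once must be done with care; Proposition \ref{rotation compatibility} is precisely the input that isolates the discrepancy as a single $[-1]$. A secondary point requiring attention is the verification of the compatibility isomorphisms invoked above ($\overline{G}\nu_{A_*}\cong\nu_A\overline{G}$, $M_r\Omega_B\cong\Omega_N M_r$) and the precise extraction of the Serre functor on $\underline{\Gproj{B}}$---in particular that $P$ is only a right adjoint to the inclusion and that $\nu_B$ a priori maps $\Gproj{B}$ into $\Ginj{B}$, so that the composite $\Omega_B P\nu_B$ has to be read as an endofunctor of $D^s(B)$. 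Once these points are settled, the theorem follows formally.
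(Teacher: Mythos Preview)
Your proposal is correct and follows essentially the same route as the paper: invoke the Serre functor $\Omega_B P\nu_B$ on $\underline{\Gproj{B}}$ from \cite[Corollary 6.4.10]{krause2021homological}, transport it to $\stab{N}{A}$ via $M_r$ using Propositions \ref{P compatibility} and \ref{Nakayama compatibility} to obtain $\Omega_N R\nu_{A_*}$, and then transport to $D^s_N(A)$ via $\overline{G}$ using Proposition \ref{rotation compatibility} to obtain $[-1]\nu_A$. The only difference is organizational---you establish the intertwining (i) first and the Serre property (ii) second, whereas the paper does both computations in a single chain---and your cautionary remarks about the bookkeeping of $\Sigma$ versus $[1]$ and the domain of $\nu_B$ are well-placed but do not reflect any gap in the argument.
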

\begin{proof}
By \cite[Corollary 6.4.10]{krause2021homological}, $\underline{\Gproj{B}}$ has Serre functor $S := \Omega_{B} P \nu_{B}$.  Thus $M_r S M_r^{-1}$ is a Serre functor for $\stab{N}{A}$ and $\overline{G}M_r S M_r^{-1}\overline{G}^{-1}$ is a Serre functor for $D^s_N(A)$.  Then
\begin{align*}
M_r S  M_r^{-1} &= M_r \Omega_{B} P \nu_{B} M_r^{-1}\\
&\cong \Omega_N M_r P \nu_{B} M_r^{-1}\\
&\cong \Omega_N \Mimo M_r \nu_{B} M_r^{-1} \tag{Proposition \ref{P compatibility}}\\
&\cong \Omega_N \Mimo \Cok \nu_{A*} \tag{Proposition \ref{Nakayama compatibility}}\\
&= \Omega_N R \nu_{A*}
\end{align*}
and
\begin{align*}
\overline{G}M_r S M_r^{-1}\overline{G}^{-1} &\cong \overline{G} \Omega_N R \nu_{A*} \overline{G}^{-1}\\
&\cong \Sigma^{-1} \overline{G}R \nu_{A*} \overline{G}^{-1}\\
&\cong \Sigma^{-1} \Sigma[-1] \overline{G} \nu_{A*} \overline{G}^{-1} \tag{Proposition \ref{rotation compatibility}}\\
&\cong [-1]\nu_{A}
\end{align*}
where the isomorphism $\overline{G} \nu_{A*} \cong \nu_{A} \overline{G}$ follows immediately from exactness of $\nu_A$.
\end{proof}

\begin{corollary}
\label{CYdim}
Suppose the Nakayama automorphism of $A$ has order $r$.  Let $s = lcm(N, r)$ and $t = \frac{s}{N}$.  If $N>2$, then $\stab{N}{A}$ is $(-2t, s)$-Calabi-Yau.  $\stab{}{A}$ is $(-1, r)$-Calabi-Yau.
\end{corollary}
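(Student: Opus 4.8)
The plan is to move the computation to the $N$-singularity category along the triangulated equivalence $\overline{G}\colon \stab{N}{A}\xrightarrow{\sim}D^s_N(A)$ of Theorem~\ref{stable-singularity equivalence}. An equivalence of triangulated categories carries a Serre functor to a Serre functor and intertwines suspensions up to natural isomorphism, so conjugation by $\overline{G}$ reduces an identity $S^l\cong\Sigma^m$ on one side to the corresponding identity on the other. Since Theorem~\ref{Serre} already identifies the Serre functor of $D^s_N(A)$ with $[-1]\nu_A$, it therefore suffices to establish, as functors on $D^s_N(A)$, the relation $\bigl([-1]\nu_A\bigr)^s\cong\Sigma^{-2t}$ (and, separately, the $N=2$ statement on $\stab{}{A}$).

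First I would observe that $\nu_A$ commutes with the shift $[1]$ on $D^s_N(A)$: it is exact and acts degreewise on $N$-complexes, so $\nu_A[1]\cong[1]\nu_A$, and hence $\bigl([-1]\nu_A\bigr)^l\cong[-l]\,\nu_A^l$ for all $l\ge 0$. Next, because $\phi_A$ has order $r$ (so $\phi_A^r$ is inner), the twisting functor $\nu_A^r=(-)_{\phi_A^r}$ is isomorphic to the identity: conjugation by a unit $u$ inducing $\phi_A^r$ gives a natural isomorphism $X_{\phi_A^r}\xrightarrow{\sim}X$, $x\mapsto xu$. Since $r\mid s$, this gives $\nu_A^s\cong\mathrm{id}$, whence $\bigl([-1]\nu_A\bigr)^s\cong[-s]$. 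Finally I would invoke the relation $\Sigma^2\cong[N]$ of \cite[Theorem~2.4]{iyama2017derived}, which descends from $K^\natural_N(\A)$ to the Verdier quotient $D^s_N(A)$; together with $s=tN$ this yields $[-s]=[N]^{-t}\cong\Sigma^{-2t}$. Combining the displays gives $\bigl([-1]\nu_A\bigr)^s\cong\Sigma^{-2t}$, and transporting back along $\overline{G}$ shows $\stab{N}{A}$ is $(-2t,s)$-Calabi--Yau for $N>2$. For $N=2$ the category $\MMor_{0}$ over $\rmod{A}$ is $\rmod{A}$ itself and the rotation functor is the identity, so Theorem~\ref{Serre} gives Serre functor $\Omega_A\nu_A$ on $\stab{}{A}$ with $\Sigma=\Omega_A^{-1}$; the same inner-automorphism argument applied to $(\Omega_A\nu_A)^r\cong\Omega_A^r$ then gives the asserted Calabi--Yau property of $\stab{}{A}$.

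The argument is essentially formal once Theorems~\ref{Serre} and~\ref{stable-singularity equivalence} are available, so I expect the only real work to be bookkeeping: verifying that each ingredient ($\overline{G}$ intertwining Serre functors and suspensions, $\nu_A$ commuting with $[1]$, and $\Sigma^2\cong[N]$ surviving passage to the quotient $D^s_N(A)$) is an isomorphism of functors rather than a mere objectwise statement, and keeping the direction of the shift straight so that $[-s]$, not $[+s]$, appears. I do not anticipate any genuine obstacle beyond this.
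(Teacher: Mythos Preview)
Your proposal is correct and follows essentially the same route as the paper: transport to $D^s_N(A)$, use $\nu_A^r\cong\mathrm{id}$ to get $([-1]\nu_A)^s\cong[-s]$, and then invoke $\Sigma^2\cong[N]$ to rewrite $[-s]=[-tN]\cong\Sigma^{-2t}$. The paper's proof is exactly this three-line computation, with the commutation of $\nu_A$ and $[1]$, the passage of $\Sigma^2\cong[N]$ to the Verdier quotient, and the transport along $\overline{G}$ all left implicit; you have simply spelled these points out. For $N=2$ you compute on the $\stab{}{A}$ side while the paper stays in $D^s_2(A)$, but since $\Sigma=[1]$ there the two computations are the same under $\overline{G}$.
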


\begin{proof}
It suffices to check that $D^s_N(A)$ has the appropriate Calabi-Yau property.  We have that $\nu_A^r \cong id$, hence $\nu_A^s \cong id$.  Then
$$([-1]\nu_A)^s \cong [-s] = [-tN] \cong \Sigma^{-2t}$$
For $N=2$, we have $\Sigma = [1]$, hence $([-1]\nu_A)^r \cong \Sigma^{-r}$.
\end{proof}

\begin{corollary}
Suppose $A$ is symmetric.  Then $\stab{}{A}$ is $(-1)$-Calabi-Yau and $\stab{N}{A}$ is $(-2, N)$-Calabi-Yau for all $N > 2$.
\end{corollary}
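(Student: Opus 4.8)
The plan is to reduce this statement immediately to Corollary \ref{CYdim}, which already handles the case where the Nakayama automorphism has finite order. Recall that $A$ is \emph{symmetric} precisely when $A \cong DA$ as $(A,A)$-bimodules; equivalently, the Nakayama functor $\nu_A$ is isomorphic to the identity functor on $\rmod{A}$. Since the Nakayama automorphism $\phi_A$ is only defined up to an inner automorphism, and $\nu_A \cong \mathrm{id}$ forces $\phi_A$ to be inner, we may choose the representative $\phi_A = \mathrm{id}_A$. First I would record that with this choice $\phi_A$ has order $r = 1$.

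Next I would simply substitute $r = 1$ into Corollary \ref{CYdim}: then $s = \mathrm{lcm}(N,1) = N$ and $t = s/N = 1$, so for $N > 2$ the category $\stab{N}{A}$ is $(-2t, s) = (-2, N)$-Calabi-Yau, while $\stab{}{A}$ is $(-1, r) = (-1, 1)$-Calabi-Yau, which by the convention adopted in Section \ref{Definitions and Notation} is just $(-1)$-Calabi-Yau. This is exactly the assertion of the corollary, so nothing further is needed.

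There is essentially no obstacle here; the only point deserving a word of care is the standard identification ``$A$ symmetric $\iff$ $\nu_A \cong \mathrm{id}$ $\iff$ $\phi_A$ may be taken to be the identity,'' together with the observation that Corollary \ref{CYdim} requires only \emph{finite} order of $\phi_A$. If one prefers to avoid invoking the automorphism at all, I would instead argue directly from Theorem \ref{Serre}: since $\nu_A \cong \mathrm{id}$ as an exact functor on $\rmod{A}$, the lifted functor $\nu_A$ on $D^s_N(A)$ is isomorphic to the identity, so the Serre functor of $D^s_N(A)$ is $[-1]\nu_A \cong [-1]$; then $([-1])^N \cong [-N] \cong \Sigma^{-2}$ using the relation $\Sigma^2 \cong [N]$, which gives the $(-2,N)$-property for $N > 2$, and for $N = 2$ one has $\Sigma = [1]$, so $[-1] = \Sigma^{-1}$ and $\stab{}{A} \simeq D^s(A)$ is $(-1)$-Calabi-Yau. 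Either route is short; I would present the first, as it is the most economical given that Corollary \ref{CYdim} is already in hand.
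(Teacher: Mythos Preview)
Your proposal is correct and follows exactly the paper's approach: observe that $A$ symmetric implies $\nu_A \cong \mathrm{id}$, hence $r=1$, and then invoke Corollary~\ref{CYdim}. The paper's proof is the one-line version of your first route; your alternative direct computation via Theorem~\ref{Serre} is also valid but unnecessary here.
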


\begin{proof}
Since $A$ is symmetric, $\nu_A = id$ hence $r = 1$.  The statement follows.
\end{proof}

The above integer pairs need not be minimal.  The presence of additional relations between the functors $\Omega$, $\nu_{A*}$ and $R$ may allow $\stab{N}{A}$ to be $(x,y)$-Calabi-Yau for smaller values of $x$ and $y$; see below for a concrete example.


\subsection{An Example}

Let $F$ be any field, let $Q$ be the quiver $\begin{tikzcd} 1 \arrow[r, "\alpha", bend left] & 2 \arrow[l, "\beta", bend left] \end{tikzcd}$, and let $A = FQ/rad^2(FQ)$.  Then $A$ is self-injective with four indecomposable modules: the simple modules $S_1$ and $S_2$ and their two-dimensional injective hulls $I_1$ and $I_2$.

\begin{center}
\begin{tikzcd}[column sep = small]
& \left[I_1\right] \arrow[dr] & & \left[I_2 \right] \arrow[dr]& \\
S_1 \arrow[ur] & & S_2 \arrow[ur] & & S_1
\end{tikzcd}
\\
The Auslander-Reiten quiver of $A$.\\
Vertices in brackets are projective-injective.
\end{center}

Fix some $N\ge 2$.  For any integers $i, j \ge 0$ satisfying $1 \le i+j \le N-1$, define objects $X(i, j)$ and $Y(i,j)$ in $\MMor_{N-2}(A)$ by
\begin{eqnarray*}
X(i,j) := 0 \rightarrow \cdots \rightarrow 0 \rightarrow S_1 \rightarrow \cdots \rightarrow S_1 \rightarrow I_1 \rightarrow \cdots \rightarrow I_1\\
Y(i,j) := 0 \rightarrow \cdots \rightarrow 0 \rightarrow S_2 \rightarrow \cdots \rightarrow S_2 \rightarrow I_2 \rightarrow \cdots \rightarrow I_2
\end{eqnarray*}
Here each sequence has exactly $i$ simples and $j$ projective-injectives, and each morphism is the canonical inclusion.

In $\rmod{A}$, every monomorphism from an indecomposable module $M$ into a direct sum $Y \oplus Z$ factors through either $Y$ or $Z$, so $(M_\bullet, \alpha_\bullet) \in \MMor_{N-2}(A)$ is indecomposable if and only if each $M_i$ is indecomposable.  Thus the indecomposable objects of $\MMor_{N-2}(A)$ are precisely the $X(i,j)$ and $Y(i,j)$.  The indecomposable projective-injectives are precisely the objects $X(0, j)$ and $Y(0,j)$.

The Nakayama automorphism of $A$ has order $2$, so by Corollary \ref{CYdim}, $\stab{N}{A}$ is $(-4, 2N)$-Calabi-Yau if $N$ is odd and $(-2, N)$ if $N > 2$ is even.  However, it is easy to check that $\nu_{A*} \cong \Omega \cong \Omega^{-1}$ on $\stab{N}{A}$ for any $N$.  It follows from Proposition \ref{rotation compatibility} that $R$ and $\Omega^{-1}$ commute, since the corresponding functors $\Sigma$ and $\Sigma[-1]$ commute in $D^s_N(A)$.  Thus $\stab{N}{A}$ has Serre functor $S = \Omega R \nu_{A*} \cong R$, and $D^s_N(A)$ has Serre functor $\Sigma[-1]$.  In particular,
$$S^N \cong \Omega^{-N+2} \cong \begin{cases} \Omega^{-1} & N \text{ odd}\\  id & N \text{ even} \end{cases}$$
Thus for $N > 2$, $\stab{N}{A}$ is $(1,N)$-Calabi-Yau for odd $N$ and $(0, N)$-Calabi-Yau for even $N$.

A straightforward computation shows that for any $i>0$,
\begin{eqnarray*}
S(X(i,j)) &=& \begin{cases} Y(i, j-1) & j > 0 \\ X(N-i, i-1) & j = 0\end{cases}
\\
S(Y(i,j)) &=& \begin{cases} X(i, j-1) & j > 0 \\ Y(N-i, i-1) & j = 0\end{cases}
\\
\Omega (X(i,j)) &=& Y(i,j)
\\
\Omega (Y(i,j)) &=& X(i,j)
\end{eqnarray*}
It follows immediately that $S^n$ is not isomorphic to any power of $\Omega$ for any $0 < n < N$.

We conclude by providing the Auslander-Reiten quiver of $\MMor_{N-2}(A)$ for representative values of $N$.

\begin{landscape}
\begin{center}
\begin{tikzcd}[column sep = small]
&Y(1,0) \arrow[dr] \arrow[r, dashed]	 &\left[Y(0,1)\right] \arrow[r, dashed] &Y(1,1) \arrow[dr] \arrow[r, dashed] &\left[Y(0,2)\right] \arrow[r, dashed] & X(2,0) \arrow[dr] &\\
X(1,1)\arrow[ur] \arrow[r, dashed] &\left[X(0,2)\right] \arrow[r, dashed] &Y(2,0) \arrow[ur] & &X(1,0) \arrow[ur] \arrow[r, dashed] &\left[X(0,1)\right] \arrow[r, dashed] &X(1,1)
\end{tikzcd}
\\
The Auslander-Reiten quiver of $\MMor_{3}(A)$.\\
Vertices in brackets are projective-injective.
\end{center}
\vspace{3cm}
\begin{center}
\begin{tikzcd}[column sep = small]
Y(1,2) \arrow[r, dashed] \arrow[dr] &\left[Y(0,3)\right] \arrow[r, dashed] &X(3,0) \arrow[dr] & &Y(1,0) \arrow[r, dashed] \arrow[dr] &\left[Y(0,1)\right] \arrow[r, dashed] &Y(1,1) \arrow[r, dashed] \arrow[dr] &\left[Y(0,2)\right] \arrow[r, dashed] &Y(1,2)\\
&X(2,0) \arrow[ur] \arrow[dr] & &X(2,1) \arrow[ur] \arrow[dr] & &Y(2,0) \arrow[ur] \arrow[dr] & &Y(2,1) \arrow[ur] \arrow[dr] &\\
X(1,0) \arrow[r, dashed] \arrow[ur] &\left[X(0,1)\right] \arrow[r, dashed] &X(1,1) \arrow[r, dashed] \arrow[ur] &\left[X(0,2)\right] \arrow[r, dashed] &X(1,2) \arrow[r, dashed] \arrow[ur] &\left[X(0,3)\right] \arrow[r, dashed] &Y(3,0) \arrow[ur] & &X(1,0)
\end{tikzcd}
\\
The Auslander-Reiten quiver of $\MMor_{4}(A)$.\\
Vertices in brackets are projective-injective.
\end{center}
\end{landscape}


\bibliography{References}{}

\begin{thebibliography}{10}

\bibitem{BHN}
P.~Bahiraei, R.~Hafezi, and A.~Nematbakhsh.
\newblock Homotopy category of {$N$}-complexes of projective modules.
\newblock {\em J. Pure Appl. Algebra}, 220(6):2414--2433, 2016.

\bibitem{borceux1994handbook}
Francis Borceux.
\newblock {\em Handbook of categorical algebra: volume 1, Basic category
  theory}, volume~1.
\newblock Cambridge University Press, 1994.

\bibitem{buchweitz1987maximal}
Ragnar-Olaf Buchweitz.
\newblock Maximal {Cohen}-{Macaulay} modules and {Tate}-cohomology over
  {Gorenstein} rings.
\newblock 1987.

\bibitem{buhler2010exact}
Theo B{\"u}hler.
\newblock Exact categories.
\newblock {\em Expositiones Mathematicae}, 28(1):1--69, 2010.

\bibitem{chan2020periodic}
Aaron Chan, Erik Darp{\"o}, Osamu Iyama, and Ren{\'e} Marczinzik.
\newblock Periodic trivial extension algebras and fractionally {Calabi}-{Yau}
  algebras.
\newblock {\em arXiv preprint arXiv:2012.11927}, 2020.

\bibitem{coelho2020simple}
Raquel Coelho~Sim{\~o}es and David Pauksztello.
\newblock Simple-minded systems and reduction for negative {Calabi}-{Yau}
  triangulated categories.
\newblock {\em Transactions of the American Mathematical Society},
  373(4):2463--2498, 2020.

\bibitem{dubois1998d}
Michel Dubois-Violette.
\newblock {$d^N$}= 0: Generalized homology.
\newblock {\em K-theory}, 14(4):371--404, 1998.

\bibitem{Dubois-violette99generalizedcohomology}
Michel Dubois-violette and Marc Henneaux.
\newblock Generalized cohomology for irreducible tensor fields of mixed {Young}
  symmetry type.
\newblock {\em Dubois-Violette M., Henneaux M., Tensor}, page 0110088, 1999.

\bibitem{EQ3}
Ben Elias and You Qi.
\newblock Categorifying {Hecke} algebras at prime roots of unity, part i.
\newblock preprint, arXiv:2005.03128.

\bibitem{EQ2}
Ben Elias and You Qi.
\newblock An approach to categorification of some small quantum groups {II}.
\newblock {\em Adv. Math.}, 288:81--151, 2016.

\bibitem{EQ1}
Ben Elias and You Qi.
\newblock A categorification of quantum {$\mathfrak{sl}(2)$} at prime roots of
  unity.
\newblock {\em Adv. Math.}, 299:863--930, 2016.

\bibitem{GKKP}
Nan Gao, Julian Külshammer, Sondre Kvamme, and Chrysostomos Psaroudakis.
\newblock A functorial approach to monomorphism categories for species i.
\newblock preprint, arXiv:1907.04657.

\bibitem{gelfand2013methods}
Sergei~I Gelfand and Yuri~I Manin.
\newblock {\em Methods of homological algebra}.
\newblock Springer Science \& Business Media, 2013.

\bibitem{iyama2017derived}
Osamu Iyama, Kiriko Kato, and Jun-ichi Miyachi.
\newblock Derived categories of {$N$}-complexes.
\newblock {\em Journal of the London Mathematical Society}, 96(3):687--716,
  2017.

\bibitem{Kap}
Mikhail~M. Kapranov.
\newblock On the {$q$}-analog of homological algebra.
\newblock preprint, arXiv:q-alg/9611005.

\bibitem{keller2008calabi}
Bernhard Keller.
\newblock {Calabi}-{Yau} triangulated categories.
\newblock {\em Trends in representation theory of algebras and related topics},
  pages 467--489, 2008.

\bibitem{keller2008acyclic}
Bernhard Keller and Idun Reiten.
\newblock Acyclic {Calabi}-{Yau} categories.
\newblock {\em Compositio Mathematica}, 144(5):1332--1348, 2008.

\bibitem{Kh}
Mikhail Khovanov.
\newblock Hopfological algebra and categorification at a root of unity: the
  first steps.
\newblock {\em J. Knot Theory Ramifications}, 25(3):1640006, 26, 2016.

\bibitem{KhQ}
Mikhail Khovanov and You Qi.
\newblock An approach to categorification of some small quantum groups.
\newblock {\em Quantum Topol.}, 6(2):185--311, 2015.

\bibitem{krause2021homological}
Henning Krause.
\newblock {\em Homological theory of representations}.
\newblock preprint,
  \url{https://www.math.uni-bielefeld.de/~hkrause/HomTheRep.pdf}, 2021.

\bibitem{Mayer}
W.~Mayer.
\newblock A new homology theory. {I}, {II}.
\newblock {\em Ann. of Math. (2)}, 43:370--380, 594--605, 1942.

\bibitem{mirmohades}
Djalal Mirmohades.
\newblock Simplicial structure on complexes.
\newblock {\em arXiv preprint arXiv:1404.0628}, 2014.

\bibitem{neeman2014triangulated}
Amnon Neeman.
\newblock {\em Triangulated Categories}.
\newblock Princeton University Press, 2014.

\bibitem{Rickard}
Jeremy Rickard.
\newblock Derived categories and stable equivalence.
\newblock {\em J. Pure Appl. Algebra}, 61(3):303--317, 1989.

\bibitem{ringel2008auslander}
Claus Ringel and Markus Schmidmeier.
\newblock The {Auslander}-{Reiten} translation in submodule categories.
\newblock {\em Transactions of the American Mathematical Society},
  360(2):691--716, 2008.

\bibitem{RS1}
Claus~Michael Ringel and Markus Schmidmeier.
\newblock Submodule categories of wild representation type.
\newblock {\em J. Pure Appl. Algebra}, 205(2):412--422, 2006.

\bibitem{xiong2014auslander}
Bao-Lin Xiong, Pu~Zhang, and Yue-Hui Zhang.
\newblock {Auslander}-{Reiten} translations in monomorphism categories.
\newblock In {\em Forum Mathematicum}, volume~26, pages 863--912. De Gruyter,
  2014.

\bibitem{zhang2011monomorphism}
Pu~Zhang.
\newblock Monomorphism categories, cotilting theory, and
  {Gorenstein}-projective modules.
\newblock {\em Journal of Algebra}, 339(1):181--202, 2011.

\end{thebibliography}
\bibliographystyle{plain}

\end{document}